\documentclass[11pt]{article}

\usepackage{amssymb,amsmath,amsthm,amsfonts,commath,mathrsfs,mathtools,secdot}
\usepackage[T1]{fontenc}
\usepackage[a4paper, margin=1in]{geometry}

\usepackage{bbm}
\usepackage{extarrows}
\usepackage{enumerate}
\usepackage{multicol}
\usepackage{fge}
\usepackage{oldgerm}
\usepackage{hyperref}
\usepackage{paralist}
\usepackage{accents}

\numberwithin{equation}{section}
\usepackage{tikz}
\usetikzlibrary{datavisualization,datavisualization.formats.functions}
\usepackage{pgfplots}
\usepackage{tkz-fct}
\usepackage{enumitem}
\setlist[enumerate]{label=(\arabic*)}
\pgfplotsset{
  every axis/.append style={
    axis x line=middle,
    axis y line=middle,
    axis line style={<->},
    xlabel={$x$},
    ylabel={$y$},
  },
  cmhplot/.style={color=blue,mark=none,line width=1pt,<->},
  soldot/.style={color=blue,only marks,mark=*},
  holdot/.style={color=blue,fill=white,only marks,mark=*},
}
\tikzset{>=stealth}
\sectiondot{subsection}

\usepackage{xcolor}

\newtheoremstyle{assump}%
  {5pt}
  {5pt}
  {\itshape}
  {}
  {\bfseries}
  {.}
  {0.5em}
  {}

\theoremstyle{plain}
\newtheorem{theorem}{Theorem}[section]
\newtheorem{lemma}[theorem]{Lemma}
\newtheorem{prop}[theorem]{Proposition}
\newtheorem{corollary}[theorem]{Corollary}

\theoremstyle{definition}
\newtheorem{definition}[theorem]{Definition}

\theoremstyle{remark}
\newtheorem{remark}{Remark}[section]

\theoremstyle{assump}
\newtheorem{assumption}{Assumption}[section]

\begin{document}
\title{The mean-field control problem for heterogeneous forward-backward systems
}
\author{Andreas S{\o}jmark and Zeng Zhang}
\date{}
\maketitle

\vspace{-9pt}
	
	\begin{abstract}
We study the problem of mean-field control when the state dynamics are given by general systems of forward-backward stochastic differential equations (FBSDEs) with heterogeneous mean-field interactions. Firstly, we introduce a novel methodology for reducing the well-posedness of such systems to that of a single randomized mean-field FBSDE. As a consequence, we show that, in the fully coupled case, smallness conditions yield existence and uniqueness for both the system itself and the associated variational and adjoint systems. Secondly, we derive a stochastic maximum principle and a verification theorem for the mean-field control problem. This gives necessary and sufficient conditions for optimality.
	\end{abstract}
 \vspace{1pt}

\section{Introduction}

The theory of mean-field control provides a natural framework for problems in which a central decision-maker seeks to minimise an aggregate cost over a large system of state processes. Clasically, it is assumed that the system is homogeneous and interacting in an exchangeable manner, so that it can be described by a representative state equation of McKean--Vlasov, or mean-field, type. This setting has been studied extensively, and we refer to~\cite{CarmonaDelarue2018} for a comprehensive overview of treatments via both dynamic programming and the stochastic maximum principle.

The latter has led to novel well-posedness theories for FBSDEs of mean-field type, since the optimality conditions in the Pontryagin maximum principle yield a BSDE as the adjoint equation for the controlled SDE; see \cite{BYZ2015, BuckdahnDjehicheLi2011, BuckdahnLiMa2016, CarmonaDelarue2015}. The case where also the joint law with the control affects the state and costs, as it will herein, was treated in \cite{AcciaioBackhoffCarmona2019}.

Leaving aside the mean-field aspect, there has been a growing interest in problems where the state processes themselves evolve as FBSDEs. This is a natural approach to many models in finance and economics, due to the central idea that expectations about future values (of some economic variables, say) feed back into their present evolution. One of the key early examples is the FBSDE formulation of Black's conjecture for the coupled evolution of a short rate and the associated consol rate \cite{Duffie1995}. Moreover, \cite{Kartala2020, Yannacopoulos2008} have shown how the state dynamics in a range of classical problems, such as exchange rate determination and the interaction of output with stock markets, may be recast in terms of FBSDEs, and the same has recently been observed in work on dynamic contagion \cite{JettkantSojmark2025}. When introducing a policy instrument in such models, one naturally arrives at control problems for FBSDEs, as in the recent work of \cite{HuJiXue2023}.

Another route to FBSDE dynamics in finance and economics arises when forward wealth or price dynamics are coupled with BSDE representations of related derivative prices~\cite{ElKaroui1997}, recursive utilities~\cite{Duffie1992}, or ambiguity-adjusted returns~\cite{Chen2002}. Similarly to \cite{HuJiXue2023}, this motivates the recent analysis of general classes of control problems for FBSDEs in \cite{WangYongZhou2024}.

In this work, we move from the optimal control of FBSDEs to the problem of mean-field control in large heterogeneous systems of FBSDEs (e.g., representing agents or economic variables) which interact through aggregate quantities related to the law of each state process and its control. This can, e.g., include a continuum of state equations that each depend on the individual laws of the other states through a graphon \cite{Lovasz2012} or related kernels. With regards to finance and economics, heterogeneous mean-field interactions of this type have been utilized in the study of systemic risk \cite{FeinsteinSojmark2021, NeumanTuschmann2024}, portfolio optimization \cite{TangpiZhou2024}, and market equilibria \cite{WeberSojmarkLi2026}. 

As far as we are aware, there are no existing results on the mean-field control problem for heterogeneous forward-backward systems. Our central contribution is to fill this gap by developing a stochastic maximum principle for a general formulation of the problem. Moreover, our approach to dealing with the heterogeneity can be of interest, on its own, for the analysis of heterogeneous mean-field systems more generally. At the cost of a little abstraction in the background, we provide a concise and self-contained methodology that can simplify the analysis and yield more general results compared to existing approaches.

\subsection{The mean-field control problem}

For a given time horizon $T>0$, we consider a system of forward-backward stochastic differential equations (FBSDEs) on $[0,T]$, where the forward and backward states, $X^u$ and $Y^u$, are indexed by $u\in U$, for a given index set $U$ which could be countable or a continuum. The dynamics of each pair $(X^u,Y^u)$ is driven by a $d$-dimensional Brownian motion $B^u$ with $(B^u)_{u\in U}$ forming a family of independent Brownian motions. The system then takes the form
\begin{equation}
\begin{cases}\label{eq:first_FBSDE_system}
&\mathrm{d}X_t^u = b^u\bigl(t,X_t^u, Y_t^u, Z^u_t, \alpha^u_t, (\mathbb{P}^{\tilde{u}}_{t})_{\tilde{u}\in U}\bigr)\mathrm{d}t+ \sigma^u\bigl(t,X_t^u, Y_t^u, Z^u_t, \alpha^u_t, (\mathbb{P}^{\tilde{u}}_{t})_{\tilde{u}\in U} \bigr)\mathrm{d}B^u_t \\
&-\mathrm{d}Y^u_t = f^u\bigl(t,X_t^u, Y_t^u, Z^u_t, \alpha^u_t, (\mathbb{P}^{\tilde{u}}_{t})_{\tilde{u}\in U}\bigr)\mathrm{d}t -Z^u_t \mathrm{d}B^u_t,\\
\end{cases}
\end{equation}
for $u\in U$, with initial conditions $X_0^u = \chi_0^u$ and terminal conditions
$Y^u_T = G^u\bigl(X^u_T,(\mathbb{P}^{\tilde{u}}_{T,1})_{\tilde{u}\in U}\bigr)$, where we write
\begin{equation}\label{eq:mf_condition}
\mathbb{P}^{u}_{t} = \mathrm{Law}(X_t^{u},Y_t^{u},Z_t^{u},\alpha^{u}_t) \quad \text{and} \quad \mathbb{P}^{u}_{T,1} = \mathrm{Law}(X^{u}_T),
\end{equation}
for $u\in U$ and $t\in[0,T]$. The processes $X^u$, $Y^u$, $Z^u$, and $\alpha^u$ take values in $\mathbb{R}^n$, $\mathbb{R}^l$, $\mathbb{R}^{l \times d}$, and $\mathbb{R}^k$ respectively. We note that the notation  $\mathbb{P}^{u}_{T,1}$ refers to the first marginal of $\mathbb{P}^{u}_{T}$.

In the above formulation, each process $\alpha^u$ will act as a control, which is chosen by a central decision-maker in order to assert a desired influence on the corresponding state dynamics $(X^u,Y^u)$. Importantly, these dynamics can differ across the indices $u \in U$, and the whole system is coupled through a form of heterogeneous mean-field interaction via the joint laws \eqref{eq:mf_condition}.

The different indices $u\in U$ are meant to capture different characteristics within a large population of agents, a large cloud of particles, or a large set of economics variables, say. We let the distribution of these characteristics across the index space $U$ be given by a probability measure $m$ on the Borel $\sigma$-algebra $\mathcal{B}(U)$, where $U$ is assumed to be a Polish space.

The mean-field control problem that we are interested in is as follows. In view of the controlled FBSDE dynamics for the state processes in \eqref{eq:first_FBSDE_system}, the central decision-maker seeks to minimise an aggreagate cost functional of the form
\begin{equation}\label{eq:cost_functional}
J(\boldsymbol{\alpha}) =\int_U \mathbb{E}  \Bigl[  \int_0^T \!\!\ell^u\bigl(t,X_t^u, Y_t^u, Z^u_t, \alpha^u_t, (\mathbb{P}^{\tilde{u}}_{t})_{\tilde{u}}\bigr)\mathrm{d}t 
+ h^u\bigl(X^u_T,(\mathbb{P}^{\tilde{u}}_{T,1})_{\tilde{u}}\bigr) + g^u(Y^u_0)   \Bigr]\mathrm{d}m(u),
\end{equation}
by choosing an optimal control $\boldsymbol{\alpha}=(\alpha^u)_{u\in U}$ from a set of admissible controls $\mathcal{A}_{\text{ad}}$.

Regarding the heterogeneous mean-field interactions, a typical graphon style example would be that the system is coupled through aggregate quantities of the form
\[
\int_U \kappa(u,\tilde{u}) \mathbb{E}[\phi(X^{\tilde{u}}_t, Y^{\tilde{u}}_t, Z^{\tilde{u}}_t, \alpha^{\tilde{u}}_t)]\mathrm{d}m(\tilde{u}),\quad\text{for}\quad  u\in U,
\]
where $\kappa : U 
\times U \rightarrow \mathbb{R}$ is given kernel, which prescribes a directed network structure on $U$, and $\phi : \mathbb{R}^n \times \mathbb{R}^l\times \mathbb{R}^{l \times d} \times \mathbb{R}^k \rightarrow \mathbb{R}$ is some Lipschitz continuous function. However, the dependence on the laws $(\mathbb{P}^{\tilde{u}}_{t})_{\tilde{u}\in U}$ can be more general, as long as suitable Lipschitz conditions involving the Wasserstein distance are satisfied. See Section \ref{sect:FBSDE_system} for the precise assumptions.

\subsection{Related literature and main contributions}\label{sect:literature}

The mean-field control problem for fully coupled FBSDEs in the classical mean-field setting has been studied in \cite{CHEN2023105550}. Specifically, that work develops a stochastic maximum principle for the problem corresponding to \eqref{eq:first_FBSDE_system}--\eqref{eq:cost_functional} with a single index $U=\{0\}$ and $m=\delta_{0}$, and with the associated joint law $\mathbb{P}^0_t$ replaced by dependence on the marginal laws $\mathrm{Law}(X_t^0)$, $\mathrm{Law}(Y_t^0)$, $\mathrm{Law}(Z_t^0)$, and $\mathrm{Law}(\alpha_t^0)$. The latter comes from the fact that the Lipschitz assumptions in \cite{CHEN2023105550} are formulated in terms of the Wasserstein distance for each marginal.

Like \cite{CHEN2023105550}, we base our structural assumptions on the monotonicity and Lipschitz conditions for FBSDEs from \cite{Pardoux} which involve a smallness condition on how the forward part depends on the backward part. However, we modify these to allow for heterogeneous mean-field interactions via additional Lipschitz conditions for a natural variant of the Wasserstein distance (see \eqref{eq:wasserstein}) which treats the family of laws \eqref{eq:mf_condition} as a Markov kernel with respect to $(\mathcal{U},m)$. Moreover, we work with the full $\mathrm{Law}(X_t^u,Y_t^u,Z_t^u,\alpha^u_t)$ rather than restricting to marginals. By looking at convex perturbations of the optimal control, we derive our maximal principle in a similar spirit to \cite{CHEN2023105550}, but we face new challenges in dealing with (i) heterogeneous mean-field interactions, and (ii) a system driven by a continuum of independent Brownian motions.

The maximum principle for the classical mean-field control problem goes back to \cite{andersson2011}. While working on the present paper, two new preprints have appeared that, independently, develop corresponding maximum principles for the case of heterogeneous mean-field interactions \cite{CaoLauriere2025, KharroubiMekkaouiPham2025}. Both works consider systems of SDEs driven by a continuum of independent Brownian motions with coefficients that depend on the given state $X_t^u$, the control $\alpha^u_t$, and the laws $(\mathrm{Law}(X_t^{\tilde{u}}))_{\tilde{u}\in U}$, where we use our notation for the indexing. The costs are analogous to \eqref{eq:cost_functional} without the $g^u(Y_0^u)$ term and with $\ell^u$ depending only on $X_t^u$, $\alpha^u_t$, and $(\mathrm{Law}(X_t^{\tilde{u}}))_{\tilde{u}\in U}$.

The specific graphon setting of \cite{CaoLauriere2025} allows them to rely on the standard notion of Lions differentiability, while \cite{KharroubiMekkaouiPham2025} considers more general dependence on the family of laws, as in our formulation, which leads them to rely on a particular notion of linear functional derivatives from the antecedent work \cite{CoppiniDeCrescenzoPham2025} (treating the dynamic programming principle). We provide novel insights even for forward-only dynamics, as our maximal principle covers general interaction via the joint laws $(\mathrm{Law}(X_t^{\tilde{u}},\alpha^{\tilde{u}}_t))_{\tilde{u}\in U}$ and we deal with the heterogeneous mean-field aspect in a new way: by doing an additional `lift' in accordance with how we treat the family of laws as a Markov kernel for $(\mathcal{U},m)$, we automatically arrive at a suitable heterogeneous variant of the Lions derivative directly from the standard machinery of Lions differentiability.

The latter is closely related to how we approach well-posedness, including measurability in the index variable $u$ which is another key element of the analysis in \cite{CaoLauriere2025, KharroubiMekkaouiPham2025} and earlier works on heterogeneous systems \cite{BCW2023, BWZ2023, LackerSoret2023}. By setting up a suitable background space and combining disintegration results with generalised Yamada--Watanabe arguments, we develop a simple and effective framework for reducing the well-posedness of the system \eqref{eq:first_FBSDE_system}--\eqref{eq:mf_condition} to the well-posedness of a single randomised mean-field FBSDE. This works directly with the solution, rather than relying on various approximations, and could be applied more generally. In particular, it overcomes some limitations imposed by the measurability arguments in \cite{CaoLauriere2025, KharroubiMekkaouiPham2025}.

\subsection{Structure of the paper}
Section \ref{section 2} fixes notation and introduces the spaces we work with. Section \ref{sect:fbsde_system_defn} gives our definition of a solution to the FBSDE system (Definition \ref{def:strong_soln}) and some properties. Section \ref{section 2.2} presents our well-posedness results (Theorem \ref{Theorem 4.2}). Section \ref{section 4} starts by specifying the mean-field control problem. Section \ref{Lion construct} presents a suitable variant of Lions differentiability. Section \ref{sect:max_principle} derives the variational and adjoint equations, establishing their well-posedness for a given admissible control (Theorem \ref{thm:FBSDE_variational_adjoint}), and then provides the maximum principle (Theorem \ref{Tm:maximum principle}). Section \ref{sect:verification} gives the verification theorem (Theorem \ref{thm:verification}). To get directly to the maximum principle, we defer the proof of well-posedness (in the sense of Definition \ref{def:strong_soln}) to Section \ref{sect:well-posedness}.

\section{Well-posedness of the mean-field FBSDE system}\label{sect:FBSDE_system}

To analyse the control problem discussed in the introduction, we must first address the well-posedness of the class of mean-field problems we are considering. 

\subsection{Preliminaries and notation}\label{section 2}
We let $U$ be a Polish space and fix a probability measure $m$ on $\mathcal{B}(U)$. We denote by $\mathcal{U}$ the completion of $\mathcal{B}(U)$ with respect to $m$, and we continue to write $m$ for its extension to $\mathcal{U}$. Next, we let $(\Omega , \mathcal{F},\mathbb{P})$ be a complete probability space, which we take to support a family of independent $d$-dimensional Brownian motions $(B^u)_{u}$ along with a family of initial random variables $(\chi_0^u)_{u}$ that are independent of $(B^u)_{u}$. Here, and throughout, we use the shorthand notation $(\cdot)_{u}$ in place of $(\cdot)_{u\in U}$. For $u\in U$, we let $\mathbb{F}^u = \{\mathcal{F}^u_t\}_{t \in [0,T]}$ denote the filtration generated by $B^u$ and $\chi_0^u$, augmented by the $\mathbb{P}$-null sets of $\mathcal{F}$.

We shall be working with families of probability measures indexed by $u\in U$. For a given Polish space $E$, we thus define the space
\begin{equation}\label{eq:wasserstein}
\mathcal{P}_m^2(E)  = \{(\mu^u)_{u } : \mu^u\in \mathcal{P}^2(E),\;(\mu^u)_{u} \;\text{is a Markov kernel,} \; \int_U \int_E x^2 \mathrm{d}\mu^u(x) \mathrm{d}m(u) < \infty \}.
\end{equation}
Here, we mean that $(\mu^u)_{u }$ is a Markov kernel for $\mathcal{U}$. That is, for every $A\in \mathcal{B}(E)$, the map $u\mapsto \mu^u(A)$ is $\mathcal{U}$-measurable, and, of course, $A \mapsto \mu^u(A)$ is a probability measure on $\mathcal{B}(E)$, for each $u\in U$. For $\mu, \nu \in \mathcal{P}_m^2(E) $, it is natural to introduce the distance
\[W_{2,m}^2 (\mu,\nu) := \int_U W^2_2 (\mu^u,\nu^u) \mathrm{d}m(u),
\]
where $W_2$ is the usual Wasserstein $2$-distance on $\mathcal{P}^2(E)$. This distance is well-defined by the Markov kernel property of elements in $\mathcal{P}_m^2(E)$.

We let $L^2_{m}(\mathbb{R}^n)$ denote the space of all families of $\mathbb{R}^n$-valued random variables $\eta = (\eta^u)_{u}$ such that $(\mathrm{Law}(\eta^u))_{u}$ is a Markov kernel with
\[\mathbb{E}\big[|\eta^u|^2\big]   < \infty \quad \forall u \in U,  \quad  \text{and} \quad
\int_U \mathbb{E}\big[|\eta^u|^2\big]  \mathrm{d}m(u) < \infty.
\]
Furthermore, we let $\mathcal{M}^2_{\mathbb{F},m}(\mathbb{R}^n)$ be the space of all families of processes $X = (X^u)_u$, such that (i) for all $u \in U$, $X^u$ is $\mathbb{F}^u$-progressively measurable with values in $\mathbb{R}^n$, and (ii) $(\text{Law}(X^u))_{u}$ is a Markov kernel with
$$
\int_0^T\mathbb{E}  [  |X_t^u|^2   ] \mathrm{d}t  < \infty \quad\forall u \in U,  \quad  \text{and} \quad
\int_U \int_0^T\mathbb{E}  [  |X_t^u|^2   ]\mathrm{d}t \mathrm{d}m(u) < \infty.$$
Finally, we denote by
$$ \mathcal{S}^2_{\mathbb{F},m}\times \mathcal{H}^2_{\mathbb{F},m} = \mathcal{S}^2_{\mathbb{F},m} (X,Y)\times \mathcal{H}^2_{\mathbb{F},m}(Z,\Lambda)$$ 
the space of all families of processes $(X^u,Y^u,Z^u,\Lambda^u)_u$ such that (i) each $(X^u,Y^u,Z^u,\Lambda^u)$ is $\mathbb{F}^u$-progressively measurable, 
(ii) for every $u \in U$,
$$ \mathbb{E}  [ \sup_{t \in [0, T]} |X^u_t|^2  ]+\mathbb{E}  [ \sup_{t \in [0, T]} |Y^u_t|^2  ]+ \int_0^T\mathbb{E}  [  |Z_t^u|^2   ] \mathrm{d}t+ \int_0^T\mathbb{E}  [  |\Lambda_t^u|^2   ] \mathrm{d}t< \infty,$$
and (iii) we have that $(\mathrm{Law}(X^u,Y^u,Z^u,\Lambda^u))_u$ is a Markov kernel with 
$$ \int_U \Bigl( \mathbb{E}  [ \sup_{t \in [0, T]} |X^u_t|^2  ]+\mathbb{E}  [ \sup_{t \in [0, T]} |Y^u_t|^2  ]+ \int_0^T\mathbb{E}  [  |Z_t^u|^2   ] \mathrm{d}t+ \int_0^T\mathbb{E}  [  |\Lambda_t^u|^2   ] \mathrm{d}t \Bigr) \mathrm{d}m(u)< \infty.$$ 
To simplify the notation, we will generally omit $\mathbb{F}$ in the subscript of the above spaces, as we work with a given filtration. Also, we shall suppress the domain of integration $U$ when integrating against $m$, whenever the context makes clear that the integration is over the entire domain.

\subsection{The FBSDE system}\label{sect:fbsde_system_defn}
 
Let
$G : U \times \mathbb{R}^n \times \mathcal{P}^2_m(\mathbb{R}^n ) \to \mathbb{R}^l$ and $\Lambda: U \times [0,T] \times \mathbb{R}^n \times C([0,T], \mathbb{R}^d)\rightarrow \mathbb{R}^k$ be given. We assume these are Borel measurable functions, and we express them in the form $(u,x,\zeta)\mapsto G^u(x,\zeta)$ and $(u,t,x,w) \mapsto\Lambda^u(t, x, w)$. Furthermore, we assume that  $\int_0^T \mathbb{E}[|\Lambda^u(t,\chi_0^u,B^u_{\cdot\land t})|^2]\text{d}t<\infty$, for every $u\in U$, and $\int\int_0^T \mathbb{E}[|\Lambda^u(t,\chi_0^u,B^u_{\cdot\land t})|^2]\text{d}t\text{d}m(u)<\infty$.

We are then looking for $(X^u,Y^u,Z^u,\Lambda^u)_u\in \mathcal{S}^2_m \times \mathcal{H}^2_m$ satisfying the system of FBSDEs
\begin{equation} \label{eq:3.2}
   \begin{cases}
&\mathrm{d}X^u_t = b^u\bigl(t,X_t^u, Y_t^u, Z^u_t,\Lambda_t^u, (\mathbb{P}^{\tilde{u}}_{t})_{\tilde{u}}\bigr)\mathrm{d}t+ \sigma^u\bigl(t,X_t^u, Y_t^u, Z^u_t,\Lambda^u_t, (\mathbb{P}^{\tilde{u}}_{t})_{\tilde{u}}\bigr)\mathrm{d}B^u_t \\
&-\mathrm{d}Y^u_t = f^u\bigl(t,X_t^u, Y_t^u, Z^u_t,\Lambda^u_t, (\mathbb{P}^{\tilde{u}}_{t})_{\tilde{u}}\bigr)\mathrm{d}t -Z^u_t \mathrm{d}B^u_t,\\
\end{cases}   
\end{equation}
for $u\in U$, with initial conditions $X_0^u = \chi_0^u$ and terminal conditions
$Y^u_T = G^u(X^u_T,( \mathbb{P}^{\tilde{u}}_{T,1})_{\tilde{u}})$, where $(\Lambda^u)_u$ satisfies $\int_0^T \mathbb{E}  [|\Lambda_t^u-\Lambda^u(t,\chi^u_0,B^u_{\cdot \land t})|^2 ]\mathrm{d}t = 0 $, for all $u\in U$, and where
\[
\mathbb{P}^{u}_{t} = \mathrm{Law}(X_t^u,Y_t^u,Z_t^{u},\Lambda^{u}_t),\]
for all $u\in U$ and $t\in[0,T]$.
As above, $\mathbb{P}^{u}_{T,1}$ is the first marginal of $\mathbb{P}^{u}_{T}$, i.e.,  $\mathbb{P}^{u}_{T,1}=\mathrm{Law}(X^{u}_T)$.

\begin{definition}[Strong solution]\label{def:strong_soln} Let $(X^u,Y^u,Z^u,\Lambda^u)_u$ in $\mathcal{S}^2_m \times \mathcal{H}^2_m$ satisfy the system of FBSDEs \eqref{eq:3.2} subject to the stated conditions, and let $\mathbb{P}^{u} = \mathrm{Law}(X^{u},Y^{u},Z^u,\Lambda^u)$ define a Markov kernel for the completion $\mathcal{U}$ of $\mathcal{B}(U)$ in the sense that there exists $U_0\in \mathcal{B}(U)$ with $m(U_0)=1$ such that $(\mathbb{P}^{u})_{u\in U_0}$ is Markov kernel for $\mathcal{B}(U_0)$. Moreover, suppose that
\begin{equation}\label{eq:project_property}
\mathbb{P}^{u}_t=\mathbb{P}\circ(X^{u}_t,Y^{u}_t,Z^u_t,\Lambda^u_t)^{-1}=\mathbb{P}^{u}\circ(x_t,y_t,z_t,\lambda_t)^{-1},
\end{equation}
for all $t\in[0,T]$, where $\mathbb{P}^{u}$ is realised on a particular path space $S_1$ and $(x_\cdot,y_\cdot,z_\cdot,\lambda_\cdot)$ is the canonical process on that space, as defined in \eqref{eq:S1S2} and \eqref{eq:canonical_process} respectively. Then, we say that $(X^u,Y^u,Z^u,\Lambda^u)_u$ is a strong solution to the mean-field FBSDE system \eqref{eq:3.2}.
\end{definition}

\begin{remark} For details on the path space, we refer to the beginning of Section \ref{sect:well-posedness}, where all the relevant machinery is introduced before establishing well-posedness. We also stress that the dynamics of \eqref{eq:3.2} and its terminal conditions are not affected by changing $(\mathbb{P}^u)_u$ on an $m$-null set. So, from that point of view, one could also just talk about a solution to \eqref{eq:3.2} as only having to be indexed by $u\in U_0$ for some set $U_0\in\mathcal{B}(U)$ with $m(U_0)=1$.
\end{remark}

Trivially, $t\mapsto \mathbb{P}\circ(X^{u}_t,Y^{u}_t)^{-1}$ is continuous for the topology of weak convergence of measures, but this need not be the case for $\mathbb{P}^{u}_t$. Thus, joint measurability in $(t,u)$ is not immediate, but we have it by our definition of a solution, as confirmed by the following lemma.

\begin{lemma}[Joint measurability]Let $\phi:U\times[0,T]\times \mathbb{R}^n \times \mathbb{R}^l \times \mathbb{R}^{l\times d}\times \mathbb{R}^k \rightarrow [0,\infty)$ be Borel measurable. Then 
\[
(t,u)\mapsto \langle \mathbb{P}^u_t ,\phi \rangle = \mathbb{E}[\phi^u( t,X_t^u,Y_t^u,Z_t^u,\Lambda_t^u)]
\]
is $\mathrm{Leb}\otimes\mathcal{U}$-measurable.
\end{lemma}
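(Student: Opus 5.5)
The plan is to reduce the claim to measurability of the Markov kernel $u\mapsto \mathbb{P}^u$ on the path space $S_1$, which Definition \ref{def:strong_soln} provides on a Borel set $U_0$ with $m(U_0)=1$. By the projection property \eqref{eq:project_property},
\[
\langle \mathbb{P}^u_t,\phi\rangle=\int_{S_1}\phi^u\bigl(t,x_t,y_t,z_t,\lambda_t\bigr)\,\mathrm{d}\mathbb{P}^u .
\]
So it suffices to show two things. First, the map $(u,t,s)\mapsto \phi^u(t,x_t(s),y_t(s),z_t(s),\lambda_t(s))$ is $\mathcal{B}(U_0)\otimes\mathcal{B}([0,T])\otimes\mathcal{B}(S_1)$-measurable. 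Second, integrating a nonnegative jointly measurable function against a Markov kernel yields a measurable function of the parameters.

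\emph{Step 1: joint measurability of the canonical process.} The coordinates $x,y$ live on continuous paths, so $(t,s)\mapsto (x_t(s),y_t(s))$ is jointly Borel. The delicate part is $(z,\lambda)$. On $S_1$ (as defined in \eqref{eq:S1S2}) these components are presumably modelled in an $L^2([0,T])$-type space, so evaluation at a fixed $t$ is not canonically defined. I would rely on \eqref{eq:canonical_process} fixing a Borel-measurable choice of representative, for instance
\[
z_t(s)=\limsup_{h\downarrow 0}\frac{1}{h}\int_{(t-h)\vee 0}^{t} z_r(s)\,\mathrm{d}r
\]
taken componentwise (set to $0$ where the limit is infinite). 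Such a choice is jointly measurable in $(t,s)$, since it is a countable limsup over rational $h$ of functions that are continuous in $t$ and measurable in $s$. This is the main obstacle. Everything hinges on the canonical process being \emph{progressively/jointly measurable} on $[0,T]\times S_1$. If the paper's definition uses a different representative, the same countable-limit argument should still apply. Composing with the Borel function $\phi$ then gives joint measurability of the integrand in $(u,t,s)$.

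\emph{Step 2: monotone class argument.} Let $\mathcal{H}$ be the class of bounded nonnegative $\mathcal{B}(U_0)\otimes\mathcal{B}([0,T])\otimes\mathcal{B}(S_1)$-measurable $F$ for which $(u,t)\mapsto\int F(u,t,s)\,\mathrm{d}\mathbb{P}^u(s)$ is $\mathcal{B}(U_0)\otimes\mathcal{B}([0,T])$-measurable. For a product indicator $F=\mathbf 1_{A\times I\times C}$ the integral equals $\mathbf 1_A(u)\mathbf 1_I(t)\mathbb{P}^u(C)$, which is measurable by the kernel property. Products of this form are closed under intersection, and $\mathcal{H}$ is closed under bounded monotone limits and differences. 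The monotone class theorem therefore shows $\mathcal{H}$ contains all bounded jointly measurable functions. For general nonnegative $\phi$, apply this to $\phi\wedge N$ and let $N\to\infty$ by monotone convergence, which allows the value $+\infty$.

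\emph{Step 3: passing to $\mathcal{U}$.} Steps 1 and 2 give $\mathcal{B}([0,T])\otimes\mathcal{B}(U_0)$-measurability on $[0,T]\times U_0$. Since $m(U\setminus U_0)=0$ and $\mathcal{U}$ is the $m$-completion, any values on $[0,T]\times(U\setminus U_0)$ are harmless: that set is contained in a $\mathrm{Leb}\otimes m$-null set of $\mathcal{B}([0,T])\otimes\mathcal{U}$. Hence the map is $\mathrm{Leb}\otimes\mathcal{U}$-measurable. Here I read $\mathrm{Leb}\otimes\mathcal{U}$ as the product $\sigma$-algebra, completed if needed. Finally, \eqref{eq:project_property} identifies $\langle\mathbb{P}^u_t,\phi\rangle$ with $\mathbb{E}[\phi^u(t,X^u_t,Y^u_t,Z^u_t,\Lambda^u_t)]$ for every $t$, which is the claimed formula.
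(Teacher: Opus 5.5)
Your proposal is correct and follows the paper's proof. It has the same three ingredients. The first is joint Borel measurability of $(u,t,s)\mapsto \phi^u(t,x_t,y_t,z_t,\lambda_t)$, which comes from the precise-representative construction of the canonical process in \eqref{eq:canonical_process}. The second is the identity \eqref{eq:project_property}. The third is measurability of integrals against a kernel: the paper simply cites this from Kallenberg (Lemma 3.2(i)), whereas you reprove it by a monotone class argument.

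One small refinement: every subset of the $m$-null Borel set $U\setminus U_0$ lies in $\mathcal{U}$, so $u\mapsto \mathbb{P}^u(C)$ is already $\mathcal{U}$-measurable on all of $U$. Running your Step 2 with $(U,\mathcal{U})$ in place of $(U_0,\mathcal{B}(U_0))$ therefore gives measurability for the uncompleted product $\mathcal{B}([0,T])\otimes\mathcal{U}$, as the paper gets, and the completion in Step 3 becomes unnecessary.
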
 \label{lm:joint_measureable}
\begin{proof}
By construction of the canonical process in Section \ref{sect:well-posedness}, see \eqref{eq:canonical_process}, we have that \[\bigl(u,t,(x,y,[z],[\lambda])\bigr)\mapsto \phi^u(t,x_t,y_t,z_t,\lambda_t)\] is Borel measurable on $U\times [0,T]\times S_1$. Next, \eqref{eq:project_property} gives
\[
\langle \mathbb{P}^u_t ,\phi \rangle = \int_{U\times [0,T]\times S_1} \phi^u(t,x_t,y_t,z_t,\lambda_t)\text{d}\mathbb{P}^u(x,y,[z],[\lambda]).
\]
Thus, the conclusion follows from \cite[Lemma 3.2(i)]{kallenberg}, as $(\mathbb{P}^u)_u$ is a Markov kernel for $\mathcal{U}$.
\end{proof}

This lemma confirms that we can integrate expectations in time and index, and that we can rely on Fubini's theorem when doing so. Naturally, the conclusion also applies if we include $\chi_0^u$ and $B^u$ in $\phi$ (technically, it is not part of the definition, but we note that the Markov kernel property of the joint law of these and the solution is considered in the proof of Theorem \ref{prop:Big-to-smal_SDE}). Thus, by Fubini's theorem, the condition $ \int_0^T \mathbb{E}  [|\Lambda_t^u-\Lambda^u(t,\chi^u_0,B^u_{\cdot \land t})|^2 ]\mathrm{d}t = 0 $ for all $u \in U$ gives
\[
\int_U \mathbb{E}[\phi^u(X_t^u,Y_t^u,Z_t^u,\Lambda_t^u)]\text{d}m(u)=\int_U \mathbb{E}\bigl[\phi^u\bigl(X_t^u,Y_t^u,Z_t^u,\Lambda^u(t,\chi_0^u,B^u_{\cdot \land t})\bigr)\bigr]\text{d}m(u),
\]
for a.e.~$t\in[0,T]$, and of course it is measurable in $t$. This confirms that we can safely take $\Lambda^u$ in \eqref{eq:3.2} to mean the $\mathbb{F}^{u}$-progressive process $\Lambda^u(t,\chi_0^u,B^u_{\cdot \land t})$, as they are the same from the point of view of the mean-field terms and, in turn, the FBSDE dynamics (using also the aforementioned condition again). The additional flexibility is purely technical in nature, in order for things to work seamlessly when realising the laws $\mathbb{P}^u$ on the appropriate path space.

 \subsection{Well-posedness}\label{section 2.2}

To establish well-posedness, we impose the following structural conditions.

\begin{assumption} \label{Assumption 4.1} For any $\eta \in \mathcal{P}^2_m(E)$, where $
E = \mathbb{R}^n \times \mathbb{R}^l \times \mathbb{R}^{l \times d} \times \mathbb{R}^k,$
we denote by $\eta^{(i)}$ the $i$-th marginal of $\eta$, for $i = 1,2,3,4$. We use the notation \( x, x_1, x_2 \in \mathbb{R}^n \), \( y, y_1, y_2 \in \mathbb{R}^l \), \( z, z_1, z_2 \in \mathbb{R}^{l \times d} \), $ r \in \mathbb{R}^k$, $\zeta, \zeta_1, \zeta_2 \in \mathcal{P}^2_m(\mathbb{R}^n)$, and $\xi, \eta_1, \eta_2 \in \mathcal{P}^2_m(E)$ where we have $\eta^{(4)}_1= \eta^{(4)}_2$. The following conditions must hold for all such variables: 

\begin{enumerate}
    \item The functions $b, \sigma, f: U \times [0,T] \times E \times \mathcal{P}^2_m(E) \rightarrow \mathbb{R}^n , \mathbb{R}^{n \times d}, \mathbb{R}^l$ are Borel measurable. Moreover,  we assume the initial condition $ (\chi^u_0)_u \in L^2_{m}(\mathbb{R}^n) $ and for every $u \in U$, $\chi^u_0$ is $\mathcal{F}^u_0$-measurable. Finally, $x \mapsto b^u(t, x, y, z, r, \xi)$ and $y \mapsto f^u(t, x, y, z, r,\xi)$ are continuous for every $u\in U$. \label{ass:2.1.1}
    
    \item  \label{ass:2.1.2}There exist constants \( \lambda_1, \lambda_2  \in \mathbb{R} \) such that, for every \( u \in U \) and \( t \in [0,T] \),
\[
\langle b^u( t, x_1, y, z, r, \xi) - b^u( t, x_2, y, z, r,\xi),x_1-x_2 \rangle \leq \lambda_1  |x_1-x_2|^2,
\]
\[
\langle f^u( t, x, y_1, z,r, \xi) - f^u( t, x, y_2, z, r,\xi), y_1-y_2 \rangle \leq \lambda_2  |y_1-y_2|^2.
\] 

    \item \label{ASS: bound}There exist positive constants $\rho$, such that, for every $u \in U$ and \( t \in [0,T] \),
\[
|b^u(t, x, y, z,r, \xi)| \le |b^u(t, 0, y, z, r, \xi)| + \rho (1 + |x|),
\]\[
|f^u(t, x, y, z,r, \xi)| \le |f^u(t, x, 0, z, r, \xi)| + \rho (1 + |y|).
\]

    \item \label{Ass: coef}There exist positive constants $\rho_i, \mu_i, i = 1,2, 3$, such that, for every $u \in U$ and \( t \in [0,T] \),
\begin{align*}
&|b^u(t, x, y_1, z_1, r, \eta_1) - b^u(t, x, y_2, z_2, r, \eta_2)| \le \rho_1 |y_1 - y_2| + \rho_2 |z_1 - z_2| 
+ \rho_{3} W_{2,m}(\eta_1, \eta_2),\\
&|f^u(t, x_1, y, z_1, r, \eta_1) - f^u(t, x_2, y, z_2, r, \eta_2)| \le \mu_1 |x_1 - x_2|  + \mu_2 |z_1 - z_2| 
+ \mu_{3} W_{2,m}(\eta_1, \eta_2).
\end{align*} 
\item \label{Ass: coefsig} There exist positive constants $w_j, j = 1, 2, 3, 4$, such that, for every $u \in U$ and \( t \in [0,T] \),
    \[
    \begin{aligned}
    &|\sigma^u(t, x_1, y_1, z_1, r, \eta_1) - \sigma^u(t, x_2, y_2, z_2, r,\eta_2)|^2  \leq w_1^2 |x_1-x_2|^2 +w_2^2 |y_1-y_2|^2+\\& \quad w_3^2|z_1-z_2|^2 + w_4^2 W^2_{2,m}(\eta_1, \eta_2).
    \end{aligned}
    \]
    \item \label{Ass: terminal}There exist positive constants $\rho_4, \rho_5 $ such that, for every $u \in U$,
$$
    |G^u(x_1, \zeta_1) - G^u(x_2, \zeta_2)|^2 \le  \rho_4^2 |x_1-x_2|^2 + \rho_5^2 W^2_{2,m}(\zeta_1, \zeta_2).
$$
    \item \label{ass:2.1.6} Let 
\(
\boldsymbol{v}_{r, 
\mu} = (0, 0, 0, r, (\mu^u)_u),
\)
for an arbitrary $(\mu^u)_u \in \mathcal{P}_m^2(\mathbb{R}^n \times \mathbb{R}^l \times \mathbb{R}^{l \times d} \times \mathbb{R}^k) $, where
\(
\mu^u \circ \pi^{-1}_{1,2,3}
\)
is the Dirac measure at 0 on \(\mathbb{R}^n \times \mathbb{R}^l \times \mathbb{R}^{l \times d}\) for each $u \in U$.
Let $\delta^{u}_0$ be the Dirac measure at $0$ on $
\mathbb{R}^n$ for each $u \in U$, we assume
\[
   (b^u, \sigma^u, f^u)_u(\cdot,  \boldsymbol{v}_{r, 
\mu})  \in \mathcal{M}^2_{m}(\mathbb{R}^n \times \mathbb{R}^{n \times d} \times \mathbb{R}^l) \quad \mathrm{and} \quad  \bigl(G^u(0, (\delta^{\tilde{u}}_0)_{\tilde{u}})\bigr)_u\in L^2_{m}( \mathbb{R}^l). 
\] 
\end{enumerate}
\end{assumption}
 
Similarly to \cite{Pardoux}, smallness conditions also lead to well-posedness of \eqref{eq:3.2}.
\begin{theorem} \label{Theorem 4.2}
Let Assumption \ref{Assumption 4.1} hold. We denote $\bar{\lambda}_1 = \lambda - 2\lambda_1 - C_1^{-1} \rho_1 - C_2^{-1} \rho_2 -(2+ C_3^{-1}  + C_4^{-1})\rho_{3} - w_1^2 - w_{4}^2$ and $ \bar{\lambda}_2 = -\lambda - 2\lambda_2 - K_1^{-1} \mu_1 - K_2^{-1}(\mu_2 + \mu_{3}) - (2+K_3^{-1}) \mu_{3}$, where $\lambda \in \mathbb{R}$,  $C_i >0, i = 1,2,3,4$ and $K_j >0 , j = 1,2,3$ are constants to be specified. If
\[
2(\lambda_1 + \lambda_2) < -2\rho_{3} - w_1^2 - w_{4}^2 - (\mu_2 + \mu_{3})^2 - 2\mu_{3},
\]
then we can choose appropriate $\lambda$, $K_2$, and sufficiently large $C_1$, $C_2$, $C_3$, $C_4$, $K_1$, $K_3$ so that
\[
1 - K_2 \mu_2 - K_2 \mu_{3} > 0 \quad \text{and} \quad \bar{\lambda}_1, \bar{\lambda}_2 > 0.
\]
Provided this holds, the constant $\theta = 1/ [( \frac{1}{\bar{\lambda}_2} + \frac{1}{1 - K_2 \mu_2 - K_2 \mu_{3}})
( \rho^2_4 + \rho_5^2 + \frac{K_1 \mu_1 + K_3 \mu_{3}}{\bar{\lambda}_1})]$, which is positive and independent of $T$, is such that if
\[
C_1\rho_1+w_2^2+ C_3\rho_{3}+w_{4}^2 \vee C_2\rho_2+ w_3^2+ C_4\rho_{3}+w_{4}^2 \in [0, \theta),
\] 
then the FBSDE system \eqref{eq:3.2} has a unique strong solution \( (X^u, Y^u, Z^u,\Lambda^u)_u \in \mathcal{S}^2_m\times \mathcal{H}^2_m \). 
\end{theorem}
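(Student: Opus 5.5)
The proof has two parts. First, reduce the heterogeneous system \eqref{eq:3.2} to a single randomised mean-field FBSDE on an enlarged probability space. Second, prove well-posedness of that single equation by the continuation method of \cite{Pardoux}, with the mean-field terms controlled through $W_{2,m}$.

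\textbf{Step 1: lifting the index.} Build a product space carrying a random index $\mathfrak{u}\sim m$, a Brownian motion $B$, and an initial condition $\chi_0$, such that conditionally on $\mathfrak{u}=u$ the pair $(\chi_0,B)$ has the law of $(\chi^u_0,B^u)$. This uses the Markov kernel property of $(\mathrm{Law}(\chi_0^u))_u$ and disintegration. On this space, consider the randomised FBSDE with coefficients $b^{\mathfrak{u}},\sigma^{\mathfrak{u}},f^{\mathfrak{u}},G^{\mathfrak{u}}$, where the mean-field argument is the conditional law kernel $u\mapsto \mathrm{Law}(X_t,Y_t,Z_t,\Lambda_t\,|\,\mathfrak{u}=u)$. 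With this choice,
\[
\mathbb{E}\bigl[W_2^2(\mathrm{Law}(\cdot|\mathfrak{u}),\mathrm{Law}(\cdot'|\mathfrak{u}))\bigr]\le \mathbb{E}|\Theta-\Theta'|^2,
\]
because $W_{2,m}^2$ is dominated by the $L^2$ distance of the lifted variables via conditional couplings. This single inequality is what lets the estimates of \cite{Pardoux} go through unchanged.

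\textbf{Step 2: well-posedness of the lifted equation.} Apply the method of continuation. Introduce the family of equations
\[
b_\gamma=\gamma b+(1-\gamma)(-\beta x)+\phi,\qquad f_\gamma=\gamma f+(1-\gamma)(-\beta y)+\psi,\qquad G_\gamma=\gamma G+\xi,
\]
and define the solution map by freezing $(y,z)$ in the forward equation and $x$ in the backward one. Then derive the two a priori estimates that underlie the constants.
\begin{itemize}
\item Forward estimate: apply Itô to $e^{\lambda t}|\Delta X_t|^2$ and use Young's inequality with weights $C_i$. This gives control of $\bar\lambda_1\,\mathbb{E}\int e^{\lambda t}|\Delta X|^2$ by $(C_1\rho_1+w_2^2+C_3\rho_3+w_4^2)\,\mathbb{E}\int e^{\lambda t}|\Delta Y|^2$ plus a similar $Z$-term. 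The $2\rho_3$ and $w_4^2$ terms come from the $W_{2,m}$ dependence on the $X$-marginal.
\item Backward estimate: apply Itô to $e^{-\lambda t}|\Delta Y_t|^2$, which similarly bounds $\bar\lambda_2\,\|\Delta Y\|^2+(1-K_2\mu_2-K_2\mu_3)\,\|\Delta Z\|^2$ by $(\rho_4^2+\rho_5^2)\,\mathbb{E}|\Delta X_T|^2$ plus $(K_1\mu_1+K_3\mu_3)\,\|\Delta X\|^2$. The $\Lambda$ marginal coincides in both inputs, since $\eta_1^{(4)}=\eta_2^{(4)}$, so it contributes nothing.
\end{itemize}
Chaining the two estimates, the composite map is a contraction exactly when the smallness quantity is less than $\theta$, independently of $T$. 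Stepping $\gamma$ by fixed increments from $0$ to $1$ then gives a unique solution. The monotonicity assumption \ref{ass:2.1.2} with continuity in \ref{ass:2.1.1}, together with the growth bounds \ref{ASS: bound}, handle the non-Lipschitz directions $x\mapsto b$ and $y\mapsto f$: each frozen forward SDE and BSDE is solvable by monotone-coefficient results. The choice of $\lambda,K_2,C_i,K_j$ is elementary algebra, and the hypothesis $2(\lambda_1+\lambda_2)<\dots$ is exactly what makes $\bar\lambda_1+\bar\lambda_2>0$ achievable with $K_2<1/(\mu_2+\mu_3)$.

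\textbf{Step 3: return to the system.} From the lifted solution, disintegrate its law with respect to $\mathfrak{u}$ to get a Markov kernel $(\mathbb{P}^u)_u$ on $S_1$. A generalised Yamada--Watanabe argument then does two things for $m$-a.e.\ $u$:
\begin{itemize}
\item Pathwise uniqueness of the conditional FBSDE with frozen flow of laws (by Step 2 estimates with fixed measures) implies that the solution is a measurable functional $\Phi^u(\chi_0,B)$, jointly Borel in $u$.
\item Setting $X^u:=\Phi^u(\chi_0^u,B^u)$ produces $\mathbb{F}^u$-progressive processes with laws $\mathbb{P}^u$, so \eqref{eq:project_property} holds and they solve \eqref{eq:3.2}. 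Outside an $m$-null set, solve with the frozen kernel directly.
\end{itemize}
For uniqueness, any strong solution of the system lifts, via the same kernel property, to a solution of the randomised equation, so uniqueness transfers.

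\textbf{Main obstacle.} I expect Step 3, the measurability and strong-solution reconstruction, to be the hardest part. It requires a functional $\Phi$ that is jointly measurable in $u$ and the path so that all the Markov kernel requirements hold. I would obtain it by applying a Kallenberg-type measurable-selection and disintegration result to the conditional laws of the lifted solution given $(\mathfrak{u},\chi_0,B)$, which are Dirac by pathwise uniqueness.
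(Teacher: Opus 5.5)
Your overall architecture matches the paper's. You lift to one mean-field FBSDE randomised by $\mathfrak{u}\sim m$, dominate $W_{2,m}^2$ of the conditional laws by the $L^2$ distance of the lifted variables so that Pardoux--Tang type estimates go through, then disintegrate in $u$ and return to the system.

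In Step 2 the paper simply adapts the contraction argument of \cite[Theorem 3.1]{CHEN2023105550}. Your continuation in $\gamma$ is superfluous: under the smallness condition the composite map $(Y,Z)\mapsto X\mapsto (Y',Z')$ is already a contraction for the original coefficients. Also, both It\^o computations must use the same weight $e^{-\lambda t}$, not $e^{\lambda t}$ for the forward part. Otherwise $\bar\lambda_1$ does not come out as stated and the two estimates do not chain.

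You genuinely differ from the paper in how you get back to $(\Omega,\mathcal{F},\mathbb{P})$. The paper uses the lifted solution only through its law. It disintegrates the path-space law to get weak solutions $\bar{\mathbb{P}}^u$ for $m$-a.e.\ $u$, moving the stochastic integrals to the fibres by approximation with simple integrands. It then separately solves each frozen-kernel FBSDE strongly with $(\chi^u_0,B^u)$ via \cite[Theorem 3.1]{Pardoux}, and identifies the two laws with \cite[Theorem 1.5]{kurtz2014weakstrongsolutionsgeneral}. So measurability in $u$ is inherited from the disintegration, and no jointly measurable solution map is ever built.

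You instead use that the lifted solution is adapted to the completed filtration of $(\mathfrak{u},\hat\chi_0,\hat{B})$. Doob--Dynkin then gives a jointly Borel $\Phi$ directly; the Dirac conditional laws are automatic here and do not need pathwise uniqueness. You then transplant $\Phi^u(\chi^u_0,B^u)$. This is more direct and yields more: the joint laws of solutions with $(\chi^u_0,B^u)$, and of several solutions together, are immediately Markov kernels. The paper has to re-derive this separately in Lemma \ref{Lm:Markov Kernel Lemma}. Your route still needs the paper's fibrewise transfer of stochastic integrals, to know that $\Phi^u$ solves the $u$-equation under $\mathrm{Law}(\chi^u_0)\otimes\mathcal{W}$. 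It also needs care with progressive measurability of $\Phi^u$ along the fibres.

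Finally, your uniqueness sentence hides the one non-trivial point. A given strong solution cannot be lifted directly: $(u,\omega)\mapsto X^u(\omega)$ need not be jointly measurable for a continuum of independent drivers, and the kernel property of $(\mathbb{P}^u)_u$ says nothing about the coupling with $(\chi^u_0,B^u)$. As in Proposition \ref{prop:small_to_big}, you must freeze the given kernels, solve the randomised frozen equation, and identify its fibres with $(X^u,Y^u,Z^u,\Lambda^u)$. With your $\Phi$, that identification follows from pathwise uniqueness of the frozen $u$-equation on the original space.
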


The proof of Theorem \ref{Theorem 4.2} is the subject of Section \ref{sect:well-posedness}.
\begin{remark}\label{Remark 3.1}
   The above conditions are analogous to those in \cite{Pardoux}, where also local well-posedness result is derived $\theta$ depending on $T$, with $T$ sufficiently small. In that case slightly weaker assumptions are needed (see, e.g., \cite[Theorem 3.1]{Pardoux}). One can have the analogous statement for our system, but we leave this out for simplicity of the presentation.
\end{remark}

If the system is only coupled through the marginals $(\mathrm{Law}(X^u_t))_u$, $(\mathrm{Law}(Y^u_t))_u$, $(\mathrm{Law}(Z^u_t))_u$, then following assumptions can be relevant for the smallness conditions.
\renewcommand{\theassumption}{2.1'}
\begin{assumption} \label{ass:alternative}
Suppose there are constants $\rho_{3i}$, $\mu_{3i}$ and $w_{4i}$, for $i = 1,2,3$, such that, for every $u \in U$ and \( t \in [0,T] \), 
\begin{align*}\label{ass:linear} 
&\bigl|b^u(t,x,y_1,z_1,r,\eta_1)-b^u(t,x,y_2,z_2,r,\eta_2)\bigr|
 \le \rho_1|y_1-y_2|+\rho_2|z_1-z_2|
   + \rho_{31} W_{2,m}(\eta_1^{(1)},\eta_2^{(1)}) \\& \quad +\rho_{32} W_{2,m}(\eta_1^{(2)},\eta_2^{(2)})+\rho_{33} W_{2,m}(\eta_1^{(3)},\eta_2^{(3)}),\\
&\bigl|f^u(t,x_1,y,z_1,r,\eta_1)-f^u(t,x_2,y,z_2,r,\eta_2)\bigr|
 \le \mu_1|x_1-x_2|+\mu_2|z_1-z_2|
   + \mu_{31} W_{2,m}(\eta_1^{(1)},\eta_2^{(1)}) \\& \quad + \mu_{32} W_{2,m}(\eta_1^{(2)},\eta_2^{(2)}) +\mu_{33} W_{2,m}(\eta_1^{(3)},\eta_2^{(3)}),\\
    &|\sigma^u(t, x_1, y_1, z_1, r, \eta_1) - \sigma^u(t, x_2, y_2, z_2, r,\eta_2)|^2  \leq w_1^2 |x_1-x_2|^2 + w_2^2 |y_1-y_2|^2 +  w_3^2 |z_1-z_2|^2   \\& \quad +  w^2_{41} W^2_{2,m}(\eta_1^{(1)},\eta_2^{(1)}) +w^2_{42} W^2_{2,m}(\eta_1^{(2)},\eta_2^{(2)})+w^2_{43} W^2_{2,m}(\eta_1^{(3)},\eta_2^{(3)}). 
\end{align*}
\end{assumption}

\renewcommand{\theassumption}{\thesection.\arabic{assumption}}
Proceeding analogously to the proof of Theorem \ref{Theorem 4.2} and adapting the arguments in the proof of \cite[Theorem 3.1]{CHEN2023105550}, we obtain the following variant of Theorem \ref{Theorem 4.2}.

\begin{theorem} \label{Theorem 4.4}
Suppose that instead of \ref{Ass: coef} and \ref{Ass: coefsig} of Assumption \ref{Assumption 4.1}, we have Assumption \ref{ass:alternative} and the rest of Assumption \ref{Assumption 4.1} holds. We denote $\bar{\lambda}_1 = \lambda - 2\lambda_1 - C_1^{-1} \rho_1 - C_2^{-1} \rho_2 - C_3^{-1} \rho_{32} - C_4^{-1} \rho_{33} - 2\rho_{31} - w_1^2 - w_{41}^2$ and $ \bar{\lambda}_2 = -\lambda - 2\lambda_2 - K_1^{-1} \mu_1 - K_2^{-1}(\mu_2 + \mu_{33}) - K_3^{-1} \mu_{31} - 2\mu_{32}$, where $\lambda \in \mathbb{R}$,  $C_i >0, i = 1,2,3,4$ and $K_j >0 , j = 1,2,3$ are constants to be specified. If
\[
2(\lambda_1 + \lambda_2) < -2\rho_{31} -w_1^2 - w_{41}^2 - (\mu_2 + \mu_{33})^2 - 2\mu_{32},
\]
then we can choose appropriate $\lambda$, $K_2$, and sufficiently large $C_1$, $C_2$, $C_3$, $C_4$, $K_1$, $K_3$ such that
\[
1 - K_2 \mu_2 - K_2 \mu_{33} > 0 \quad \text{and} \quad \bar{\lambda}_1, \bar{\lambda}_2 > 0.
\]
Provided this holds, the constant $\theta = 1/ [( \frac{1}{\bar{\lambda_2}} + \frac{1}{1 - K_2 \mu_2 - K_2 \mu_{33}})
(  \rho^2_4 + \rho_5^2 + \frac{K_1 \mu_1 + K_3 \mu_{31}}{\bar{\lambda_1}})],$ which is positive and independent of $T$, is such that if 
\[
C_1\rho_1+w_2^2+ C_3\rho_{32}+w_{42}^2 \vee C_2\rho_2+ w_3^2+ C_4\rho_{33}+w_{43}^2 \in [0, \theta),
\] 
then the FBSDE system \eqref{eq:3.2} admits a unique strong solution \( (X^u, Y^u, Z^u,\Lambda^u)_u \in \mathcal{S}^2_m\times \mathcal{H}^2_m \). 
\end{theorem}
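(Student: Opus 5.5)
The plan is to follow the proof of Theorem \ref{Theorem 4.2} step by step, changing only the a priori estimates. Under Assumption \ref{ass:alternative}, the Wasserstein terms split into separate contributions from the $X$-, $Y$- and $Z$-marginals. The argument has two layers, which I keep separate.

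\emph{Reduction layer.} This layer is independent of the specific Lipschitz structure. As for Theorem \ref{Theorem 4.2}, we realise the laws on the path space $S_1$ of Section \ref{sect:well-posedness}. Via disintegration and the generalised Yamada--Watanabe argument, existence and uniqueness of a strong solution of \eqref{eq:3.2} in the sense of Definition \ref{def:strong_soln} reduce to existence and uniqueness for a single randomised mean-field FBSDE. In that equation the index $u$ is drawn from $m$, independently of the noise, and the interaction depends on the conditional laws given $u$. Nothing in this step uses \ref{Ass: coef} or \ref{Ass: coefsig} beyond measurability and square integrability, which Assumption \ref{ass:alternative} preserves. I would simply invoke the same reduction.

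\emph{Estimates layer.} For the randomised FBSDE I use the continuation method of \cite{Pardoux}, as adapted in \cite[Theorem 3.1]{CHEN2023105550}. Take two inputs with differences $\hat X,\hat Y,\hat Z$. Apply Itô's formula to $e^{\lambda t}|\hat X_t|^2$ and to $e^{\lambda t}|\hat Y_t|^2$, integrate, take expectations, and integrate against $m$.

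The key estimate is $W_{2,m}(\eta_1^{(i)},\eta_2^{(i)})^2\le \int \mathbb{E}|\hat\cdot^u_t|^2\,\mathrm{d}m(u)$ for each marginal, using the coupling given by the two solutions. Each mean-field term is then bounded by Young's inequality with weights matched to its marginal:
\begin{itemize}
\item In the forward estimate, the $\rho_{31}$ term pairs with $\hat X$, contributing $2\rho_{31}$. The $\rho_{32}$ and $\rho_{33}$ terms are split with $C_3$ and $C_4$, giving $C_3^{-1}\rho_{32}$ and $C_4^{-1}\rho_{33}$ against $|\hat X|^2$, and $C_3\rho_{32}$ and $C_4\rho_{33}$ against $\hat Y$ and $\hat Z$. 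The $\sigma$ terms $w_{4i}^2$ join $w_1^2,w_2^2,w_3^2$ accordingly.
\item In the backward estimate, the $\mu_{32}$ term pairs with $\hat Y$, contributing $2\mu_{32}$. The $\mu_{33}$ term joins $\mu_2$ in the $K_2$ split against $|\hat Z|^2$. The $\mu_{31}$ term uses $K_3$, exactly as $\mu_1$ uses $K_1$.
\end{itemize}
This reproduces the constants $\bar\lambda_1$ and $\bar\lambda_2$ of the statement, the condition $1-K_2(\mu_2+\mu_{33})>0$, and the contraction constant $\theta$. The terminal term is handled with $\rho_4,\rho_5$ and $W_{2,m}(\zeta_1,\zeta_2)^2\le\int\mathbb{E}|\hat X_T^u|^2\,\mathrm{d}m$.

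\emph{Choice of constants.} The condition $2(\lambda_1+\lambda_2)<\cdots$ guarantees the existence of $\lambda$ and $K_2$ making $\bar\lambda_1,\bar\lambda_2>0$ once the $C_i,K_1,K_3$ are large. This is the same elementary check as before: choose $K_2$ near $1/(\mu_2+\mu_{33})$ and $\lambda$ in the admissible interval.

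\emph{Conclusion.} With the resulting combined estimate, the map sending an input to the solution of the decoupled equation (or the continuation in the parameter) is a contraction in $\mathcal{S}^2_m\times\mathcal{H}^2_m$ with the weighted norm, under the stated smallness condition. The uniform a priori bound then allows steps of fixed size in the continuation parameter, which gives existence and uniqueness for the randomised equation and hence, via the reduction, for \eqref{eq:3.2}.

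\emph{Main obstacle.} The only delicate point is the bookkeeping of which marginal each Wasserstein term is paired with, so that the exact constants of the statement come out. In particular, $\mu_{33}$ must be absorbed together with $\mu_2$ against $|\hat Z|^2$; it cannot be absorbed via $\bar\lambda_2$, since there is no $|\hat Z|^2$ term on the left of the $Y$-estimate other than the martingale part. The rest is routine.
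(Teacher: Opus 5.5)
Your proposal is correct and follows the paper's route. Theorem~\ref{Theorem 4.4} is obtained by rerunning the reduction of Section~\ref{sect:well-posedness} to the randomised FBSDE \eqref{eq: FBSDE} and redoing the \cite{Pardoux}-type estimates of \cite[Theorem 3.1]{CHEN2023105550} with marginal-wise Wasserstein terms; your bookkeeping reproduces $\bar{\lambda}_1$, $\bar{\lambda}_2$, the condition on $K_2$, and $\theta$ exactly.

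Two small corrections:
\begin{itemize}
\item The \cite{Pardoux} scheme is a Picard contraction of the decoupling map $(y,z)\mapsto(Y,Z)$ in the $e^{-\lambda t}$-weighted norm, so no continuation step is needed. The weight $e^{-\lambda t}$, not $e^{\lambda t}$, gives the stated signs of $\lambda$ in $\bar{\lambda}_1$ and $\bar{\lambda}_2$.
\item Your reduction layer is not free of structural assumptions. It uses the $(x,y,z)$-Lipschitz parts of \ref{Ass: coef}--\ref{Ass: coefsig} and the smallness condition, through well-posedness of the frozen-kernel FBSDE in Corollary~\ref{co:existence} and Corollary~\ref{co:uniqueness}. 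These are retained under Assumption~\ref{ass:alternative}, and dropping the Wasserstein terms only relaxes the smallness requirement, so nothing breaks.
\end{itemize}
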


\section{The heterogeneous mean-field control problem} \label{section 4}
In this section, we establish a Pontryagin stochastic maximum principle and derive a corresponding verification theorem for the controlled system of FBSDEs
\begin{equation}
    \begin{cases}
&\mathrm{d}X_t^u = b^u\bigl(t,X_t^u, Y_t^u, Z^u_t, \alpha^u_t, (\mathbb{P}^{\tilde{u}}_{t})_{\tilde{u}}\bigr)\mathrm{d}t+ \sigma^u\bigl(t,X_t^u, Y_t^u, Z^u_t, \alpha^u_t, (\mathbb{P}^{\tilde{u}}_{t})_{\tilde{u}}\bigr)\mathrm{d}B^u_t \\
&-\mathrm{d}Y^u_t = f^u\bigl(t,X_t^u, Y_t^u, Z^u_t, \alpha^u_t, (\mathbb{P}^{\tilde{u}}_{t})_{\tilde{u}}\bigr)\mathrm{d}t -Z^u_t \mathrm{d}B^u_t\\
\end{cases}  \label{eq:4.1}
\end{equation}
with initial conditions $X_0^u = \chi_0^u$ and terminal conditions
$Y^u_T = G^u(X^u_T,(\mathbb{P}^{\tilde{u}}_{T,1})_{\tilde{u}})$, for $u \in U$.
Writing $\boldsymbol{\alpha} = (\alpha^u)_u$, we recall that the global cost function is
\[
J(\boldsymbol{\alpha}) = \int\mathbb{E}  \Bigl[  \int_0^T \ell^u\bigl(t,X_t^u, Y_t^u, Z^u_t, \alpha^u_t, (\mathbb{P}^{\tilde{u}}_{t})_{\tilde{u}}\bigr)\mathrm{d}t 
+ h^u\bigl(X^u_T,(\mathbb{P}^{\tilde{u}}_{T,1})_{\tilde{u}}\bigr) + g^u(Y^u_0)   \Bigr]\mathrm{d}m(u).
\]
Let \( \mathcal{A} \) be a non-empty convex subset of \( \mathbb{R}^k \). Then, the set of admissible controls is
\[
\mathcal{A}_{\text{ad}} :=  \{ \boldsymbol{\alpha} |\boldsymbol{\alpha} \in \mathcal{M}^2_{m}( \mathbb{R}^k) , \alpha^u_t= \alpha^u(t,\chi_0^u,B^u_{\cdot \wedge t}), \alpha^u_t \in \mathcal{A},  \text{ for $u\in U$}, \ t \in [0,T],\ \mathbb{P}\text{-a.s.}  \},
\] 
where $(u,t,x_0,w) \mapsto \alpha^u(t, x_0, w_{\cdot\land t})$ must be Borel measurable on $U \times [0,T] \times \mathbb{R}^n \times C([0,T], \mathbb{R}^d).$

\subsection{Differentiability along the Markov kernels} \label{Lion construct}

Consider a measurable function of the form
\begin{equation}\label{eq:b_general_form}
(u,t,x,y,z,a,(P^{\tilde{u}})_{\tilde{u}})\mapsto \gamma^u(t,x,y,z,a,(P^{\tilde{u}})_{\tilde{u}}) \in \mathbb{R}^r,
\end{equation}
for $(u,t,x,y,z,a)\in U \times [0,T] \times  E$ and $(P^{\tilde{u}})_{\tilde{u}} \in \mathcal{P}^2_m(E)$, where $E=\mathbb{R}^n \times \mathbb{R}^l \times \mathbb{R}^{l \times d} \times \mathbb{R}^k$ and $r$ is a positive integer. We need a notion of differentiability in the direction of the family of probability measures $(P^{u})_{u}$ or a family of its marginals (e.g., the first marginal of each $P^{u}$ on $\mathbb{R}^n$). It turns out that we can arrive at this based on the standard machinery of $L$-differentiability. 

Let $(\Omega',\mathcal{{F}}',\mathbb{{P}}')$
support a random variable $\mathfrak{u}:\Omega^\prime \rightarrow U $ with $\text{Law}(\mathfrak{u})=m$
and an independent uniform random variable $I:\Omega^\prime \rightarrow (0,1)$. Given $\mathbb{Q}(O\times D)= \int_D P^u(O)\mathrm{d}m(u)$, we can find a measurable function $\tilde{h}:U\times(0,1) \rightarrow  E$
such that $(X',Y',Z',A'):=\tilde{h}(\mathfrak{u},I)$
satisfies $
((X',Y',Z',A'),\mathfrak{u})\sim \mathbb{Q}$ (see e.g.~\cite[Lemma 4.22]{kallenberg}).
Conversely, any random variable $(X',Y',Z',A'): \Omega'\rightarrow E$,
induces a joint law $\mathbb{Q}=\mathrm{Law}((X',Y',Z',A'),\mathfrak{u})$ whose last marginal is $m$. For each $u\in U$, it is therefore natural to `lift' \eqref{eq:b_general_form} to the mapping
\begin{equation}\label{eq:lift}
\tilde{\gamma}^u(t,x,y,z,a,(X',Y',Z',A')):=\gamma^u(t,x,y,z,a,({Q}^{\tilde{u}})_{\tilde{u}})
\end{equation}
on $L^{2}((\Omega',\mathcal{{F}}',{P}'); E)$, where the Markov kernel $({Q}^{u})_{u}$ is the conditional law of $(X',Y',Z',A')$ given $\mathfrak{u}$ (which exists, e.g., by~\cite[Theorem 8.5]{kallenberg}). Now suppose we have $L$-differentiability in the usual sense that
\[
(X',Y',Z',A')\mapsto\tilde{\gamma}^u(t,x,y,z,a,(X',Y',Z',A'))
\]
 is Fr\'echet differentiable on $L^{2}((\Omega',\mathcal{{F}}',\mathbb{{P}}'); E)$. Let $\Theta^{i}:=(t,x,y,z,a,(P^{i,u})_{u})$ be given, for $i=1,2$, and let $\Xi^{i}:=(X^{\prime,i},Y^{\prime,i},Z^{\prime,i},A^{\prime,i})$ correspond to $(P^{i,u})_{u}$ in the lift \eqref{eq:lift}. Then, the assumed $L$-differentiability (see e.g.~\cite[Proposition 5.24]{CarmonaDelarue2018}) gives that
\begin{equation}\label{eq:L-diff}
\tilde{\gamma}^u(t,x,y,z,a,\Xi^{2})-\tilde{\gamma}^u(t,x,y,z,a,\Xi^{1})
=\mathbb{{E}}  [\partial_{P}\gamma^u(\Theta^{1})(\Xi^{1}) \cdot (\Xi^2 - \Xi^1)  ]+o(||\Xi^2-\Xi^1||_2)
\end{equation}
as $||\Xi^2-\Xi^1||_2 \rightarrow 0$,
for some functional
\begin{equation}
\partial_{P}\gamma^u(\Theta^{1})(\cdot):\mathbb{{R}}^{n+l+l\cdot d+k} \rightarrow \mathbb{R}^{r \times (n+l+l\cdot d+k)}.
\end{equation}
This functional is called the $L$-derivative, for the lift \eqref{eq:lift}, and $\partial_{P}\gamma^u(\Theta^{1})(\Xi^{1})$ is in $L^2$. By the structure of the lift, we have
\begin{align*}
    \mathbb{{E}}[\partial_{P}\gamma^u(\Theta^{1})(\Xi^{1})\cdot (\Xi^{2}- \Xi^{1}) ] &=\mathbb{{E}}^{\mathfrak{u}}\bigl[\mathbb{E}[\partial_{P}\gamma^u(\Theta^{1})(\Xi^{1})(\Xi^{2}- \Xi^{1})\mid \mathfrak{u}] \bigr] \\& =  \int \mathbb{E}  [ \partial_{P}\gamma^u(\Theta^{1})(\Xi^{1,\tilde{u}}) (\Xi^{2,\tilde{u}}- \Xi^{1,\tilde{u}})    ]\mathrm{d} m (\tilde{u}),
\end{align*}
for random variables
\begin{equation}\label{eq:xi^u}
\Xi^{i,u} = (X^{\prime,i,u},Y^{\prime,i,u},Z^{\prime,i,u},A^{\prime,i,u}) \quad \text{with} \quad \text{Law}(\Xi^{i,u})=P^{i,u},\quad u\in U,
\end{equation}
for $i=1,2$, which we may view as providing a family of $L^2$-lifts for $(P^{i,u})_{u}$ in accordance with \eqref{eq:lift}.
The same applies to
 $||\Xi^1-\Xi^2||_2$. Therefore,
\eqref{eq:L-diff} can be expressed as
\begin{align}\label{eq:generalised_L-diff}
\gamma^u(\Theta^2)-\gamma^u(\Theta^1)
&=\int \mathbb{{E}}  [\partial_{P}\gamma^u(\Theta^{1})(\Xi^{1,\tilde{u}}) (\Xi^{2,\tilde{u}}-\Xi^{1,\tilde{u}})  ]\mathrm{d}m(\tilde{u}) \nonumber \\
&\qquad \qquad +o  \bigl(\int||\Xi^{1,\tilde{u}}-\Xi^{2,\tilde{u}}||_2 \mathrm{d}m(\tilde{u})  \bigr),
\end{align}
as $\int||\Xi^{1,\tilde{u}}-\Xi^{2,\tilde{u}}||_2 \mathrm{d}m(\tilde{u})  \rightarrow 0$, for $(\Xi^{i,u})_u$ given by \eqref{eq:xi^u}, with
\begin{equation}\label{eq:diff_L2}
\int \mathbb{{E}}  [|\partial_{P}\gamma^u(\Theta^{1})(\Xi^{1,\tilde{u}})|^2]\mathrm{d}m(\tilde{u}) <\infty.
\end{equation}

\begin{definition}[$L_m$-differentiability]\label{def:Lm-derivative} For any given $t\in [0,T]$, $(x,y,z,a)\in E$, and $(P^{\tilde{u}})_{\tilde{u}}\in \mathcal{P}^2_m(E)$, we write $\Theta = (t,x,y,z,a, (P^{\tilde{u}})_{\tilde{u}})$.  We will say that a measurable function \[
(u,\Theta)\mapsto \gamma^u(\Theta)
\]is $L_m$-differentiable if \eqref{eq:generalised_L-diff}--\eqref{eq:diff_L2} holds for all points $(t,x,y,z,a)\in[0,T]\times E$, all indices $u\in U$ in the support of $m$, and any pair of Markov kernels $(P^{1,u})_{u}$ and $(P^{2,u})_{u}$ in $\mathcal{P}_m^2(E)$. In that case, we refer to the family $(\partial_{P}\gamma^u(\Theta^1))_{u}$ of mappings
\begin{equation} \label{eq:Lm_derivative}
    \partial_{P}\gamma^u(\Theta^{1})(\cdot):\mathbb{{R}}^{n+l+l\cdot d+k} \rightarrow \mathbb{R}^{r \times (n+l+l\cdot d+k)}
\end{equation}
as the $L_m$-derivative of $(u,\Theta)\mapsto \gamma^u(\Theta)$ at $\Theta^1=(t,x,y,z,a,(P^{1,\tilde{u}})_{\tilde{u}})$.
\end{definition}

The key property that we exploit in our arguments is \eqref{eq:generalised_L-diff}. One can of course also consider partial $L_m$-derivatives, for $\Theta^1$ and $\Theta^2$ that only differ in the direction of a given marginal. When this is relevant, we shall denote by $\partial_{P^{(1)}}\gamma^u$, $\partial_{P^{(2)}}\gamma^u$, $\partial_{P^{(3)}}\gamma^u$, and $\partial_{P^{(4)}}\gamma^u$, the partial $L_m$-derivatives with respect to the marginals represented by $X^\prime$, $Y^\prime$, $Z^\prime$, and $A^\prime$ in the above.

\subsection{The maximum principle}\label{sect:max_principle}

We make the following assumption to prove the maximum principle.

\begin{assumption}\label{Ass:5.2}
We fix the notation \( x, x_1, x_2 \in \mathbb{R}^n \), \( y, y_1, y_2 \in \mathbb{R}^l \), \( z, z_1, z_2 \in \mathbb{R}^{l \times d} \), \( r, r_1, r_2 \in \mathbb{R}^k \), \(\zeta, \zeta_1, \zeta_2 \in \mathcal{P}^2_m(\mathbb{R}^n) \),  and \( \xi, \eta_1, \eta_2 \in \mathcal{P}^2_m(E) \), where $E=\mathbb{R}^n \times \mathbb{R}^l \times \mathbb{R}^{l \times d} \times \mathbb{R}^k$. Let
$\Delta \varphi = \varphi_1 - \varphi_2$, for $\varphi = x, y, z, r$, $
\theta= (x, y, z, r, \xi)$, $\theta^{x}_i = (x, y_i, z_i, r_i, \eta_i)$, $ \theta^{y}_i := (x_i, y, z_i, r_i, \eta_i)$, $\theta_i = (x_i, y_i, z_i, r_i, \eta_i)$, $ i = 1,2$.
Assume the following conditions hold:

\begin{enumerate}
     \item The functions $b, \sigma, f, l: U \times [0,T] \times E \times \mathcal{P}^2_m(E) \rightarrow \mathbb{R}^n , \mathbb{R}^{n \times d}, \mathbb{R}^l, \mathbb{R}$ are Borel measurable. The function $h: U \times  \mathbb{R}^n \times \mathcal{P}^2_m(\mathbb{R}^n) \rightarrow \mathbb{R}$ is Borel measurable and the function $g: U \times \mathbb{R}^l  \rightarrow \mathbb{R}$ is Borel measurable. Moreover,  we assume the initial condition $ (\chi^u_0)_u \in L^2_{m}(\mathbb{R}^n) $ and for every $u \in U$, $\chi^u_0$ is $\mathcal{F}^u_0$-measurable. Finally, $x \mapsto b^u(t, x, y, z, r, \xi)$ and $y \mapsto f^u(t, x, y, z, r,\xi)$ are continuous for every $u\in U$.
    \item For every $u\in U$,  the functions \( b^u, \sigma^u, f^u, \ell^u \) are continuously differentiable with respect to \( (x, y, z, r) \),  the functions \( G^u, h^u \) are continuously  differentiable with respect to \( x \), and \( g^u \) is continuously  differentiable with respect to \( y\). We also require that the derivatives \( b^u_x \) and \( g^u_y \) are uniformly bounded. Moreover, the functions \( b, \sigma, f, \ell \) are continuously $L_m$-differentiable with respect to \( \xi \) in the sense that \eqref{eq:Lm_derivative} exist and are continuous for $\gamma = b, \sigma, f, \ell$. Finally, we require the functions \( G, h \) are continuously $L_m$-differentiable with respect to \( \zeta \). 
    
    \item  Let 
\(
\boldsymbol{v}_{r, 
\mu} := \bigl(0, 0, 0, r, (\mu^u)_u \bigr),
\)
for an arbitrary $(\mu^u)_u \in \mathcal{P}_m^2(E) $ such that \(
\mu^u \circ \pi^{-1}_{1,2,3}
\)
is the Dirac measure at 0 on \(\mathbb{R}^n \times \mathbb{R}^l \times \mathbb{R}^{l \times d}\) for all $u \in U$.
Let $\delta^{s,u}_0$ be the Dirac measure at $0$ on $
\mathbb{R}^s$ for each $u \in U$. Then we require
    \[
   (b^u, \sigma^u, f^u)_u(\cdot,  \boldsymbol{v}_{r, 
\mu})  \in \mathcal{M}^2_{m}(\mathbb{R}^n \times \mathbb{R}^{n \times d} \times \mathbb{R}^l)\quad \mathrm{and} \quad  \bigl(G^u(0, (\delta^{n,u}_0)_u)\bigr)_u\in L^2_{m}( \mathbb{R}^l). 
    \]

    \item There exists a constant \( L > 0 \) such that, for every $u\in U$ and $t \in [0,T]$,
\begin{align*}
     |\ell^u(t,x, y, z, r, \xi)|   &\leq L \bigl(1 + |x|^2 +|y|^2+|z|^2+|r|^2+ W^2_{2,m}(\xi, (\delta^{n\times l \times(l\times d)\times k,u}_0)_u)\bigr),\\
     |h^u(x,\zeta)|  &\leq L \bigl(1 + |x|^2 +W^2_{2,m}(\zeta, (\delta^{n,u}_0)_u) \bigr),\\
    |g^u(y)|& \leq L(1+|y|^2).
 \end{align*}
    
    \item Denote $M^{\top}$ as the transpose of $M$ and $I$ as identity matrix, for every $u\in U$ and $t \in [0,T]$,
\begin{align*}
&\frac{1}{2}  ( b^u_x + (b^{u}_x)^{\top}  )(t, \theta) \leq \lambda_1 I_n, \quad
\frac{1}{2}  ( f^u_y + (f^{u}_y)^{\top} )(t, \theta) \leq \lambda_2 I_l,\\
&|b^u(t, \theta^{x}_1) - b^u(t, \theta^{x}_2)|
\leq \rho_1 |\Delta y| + \rho_2 |\Delta z| + \rho_6 |\Delta r| 
+ \rho_{3} W_{2,m}(\eta_1, \eta_2),  \\
&|f^u(t, \theta^{y}_1) - f^u(t, \theta^{y}_2)|
\leq \mu_1 |\Delta x| + \mu_2 |\Delta z| + \mu_4 |\Delta r| 
+ \mu_{3} W_{2,m}(\eta_1, \eta_2) , \\
&|\sigma^u(t, \theta_1) - \sigma^u(t, \theta_2)|^2
\leq w_1^2|\Delta x|^2 + w^2_2 |\Delta y|^2 + w^2_3 |\Delta z|^2 + w^2_5 |\Delta r|^2  + w_{4}^2 W^2_{2,m}(\eta_1, \eta_2),  \\
&|G^u(x_1, \zeta_1) - G^u(x_2, \zeta_2)|^2
\leq \rho_4^2 |\Delta x|^2 + \rho^2_5 W^2_{2,m}(\zeta_1, \zeta_2).
\end{align*}
where $\lambda_1, \lambda_2 \in \mathbb{R}$, $ \rho_i$, for $i = 1,\cdots, 6$, $\mu_j$, for $ j = 1, \cdots, 4$, and $w_k$, for $ k = 1, \cdots, 5$ are positive constants.
\item Choose $0<K_2 <\frac{1}{\mu_2+\mu_{3}}$. For notational simplicity (in anticipation of the variational equations), set $\hat{w}_i := \sqrt{6}w_i$, for $i = 1,2,3,4$ and $\hat{\rho}_j := \sqrt{2}\rho_j$, for $j =4,5$. We assume
\begin{align*}
    &2\lambda_1+    2 \lambda_2  <- 2\rho_{3}-\hat{w}_1^2-\hat{w}_{4}^2-2\mu_{3}-(\mu_2+\mu_{3})^2,
\end{align*}
and choose $\lambda,C_1,C_2,C_3,C_4,K_1,K_3$  such that 
$\bar{\lambda}_{11} = \lambda-2\lambda_1 - C_1^{-1}\rho_1 - C_2^{-1} \rho_2 - ( 2+C_3^{-1}  + C_4^{-1})\rho_{3}-\hat{w}_1^2 - \hat{w}_{4}^2>0$ and $\bar{\lambda}_{12} =-\lambda - 2\lambda_2 - K_1^{-1} \mu_1 - K_2^{-1}(\mu_2 + \mu_{3}) - (2+K_3^{-1}) \mu_{3} >0 $.
Let  $\theta_{1} = 1/\bigl[( \frac{1}{\bar{\lambda}_{12}} + \frac{1}{1 - K_2 \mu_2 - K_2 \mu_{3}} )
 ( \hat{\rho}^2_4 + \hat{\rho}_5^2 + \frac{K_1 \mu_1 + K_3 \mu_{3}}{\bar{\lambda}_{11}} )\bigr],$ and we assume \[
C_1\rho_1+\hat{w}_2^2+ C_3\rho_{3}+\hat{w}_{4}^2  \vee C_2\rho_2+ \hat{w}_3^2+ C_4\rho_{3}+\hat{w}_{4}^2  \in [0, \theta_{1}).
\] 
\end{enumerate}
\end{assumption}
\begin{remark} \label{Remark:5.1}
   By the same arguments as in \cite[Remark 4.1]{CHEN2023105550}, for $u \in U$, if $b^u_x$ is continuous, we have that \[
\frac{1}{2}  ( b^u_x + (b^u_x)^\top  )(t, \theta) \leq \lambda_1 I_n
\] is equivalent to \[
\langle b^u( t, x_1, y, z, r, \xi) - b^u( t, x_2, y, z, r, \xi), x_1-x_2 \rangle \leq \lambda_1  |x_1-x_2|^2.
\] 
The analogous observation also holds for the coefficient $f^u$, for $u \in U$.
\end{remark} 

Under the assumption on the control $\boldsymbol{\alpha}$, we may replace
$\Lambda^u_t$ in system \eqref{eq:3.2} by $\alpha^u_t$ for all $u \in U$.  Applying Theorem \ref{Theorem 4.2} and Remark \ref{Remark:5.1} then yields the following result.

\begin{theorem} \label{Theorem for FBSDE}
    Let Assumption \ref{Ass:5.2} hold. Then there exists a unique strong solution for the controlled FBSDE \eqref{eq:4.1} under any admissible control $\boldsymbol{\alpha}$.
\end{theorem}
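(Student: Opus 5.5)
The plan is to reduce Theorem \ref{Theorem for FBSDE} to Theorem \ref{Theorem 4.2} by checking that, for a fixed admissible control, Assumption \ref{Ass:5.2} implies every item of Assumption \ref{Assumption 4.1} together with the smallness conditions of Theorem \ref{Theorem 4.2}.

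\textbf{Step 1: identify the control with $\Lambda$.} Fix $\boldsymbol{\alpha}\in\mathcal{A}_{\text{ad}}$. By definition, $\alpha^u_t=\alpha^u(t,\chi_0^u,B^u_{\cdot\wedge t})$ for a Borel map $(u,t,x_0,w)\mapsto\alpha^u(t,x_0,w_{\cdot\wedge t})$. Since $\boldsymbol{\alpha}\in\mathcal{M}^2_m(\mathbb{R}^k)$, the integrability requirements imposed on $\Lambda$ at the start of Section \ref{sect:fbsde_system_defn} hold, for each $u$ and after integrating in $m$. So we take $\Lambda^u(t,x_0,w):=\alpha^u(t,x_0,w)$. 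Then \eqref{eq:4.1} is exactly \eqref{eq:3.2}, and a strong solution of one is a strong solution of the other. The control enters the coefficients only through the slot $r$, which Assumption \ref{Assumption 4.1} holds fixed when comparing arguments.

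\textbf{Step 2: verify Assumption \ref{Assumption 4.1}.} Items \ref{ass:2.1.1} and \ref{ass:2.1.6} are literally items (1) and (3) of Assumption \ref{Ass:5.2}.

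For item \ref{ass:2.1.2}, use the matrix bounds on $\tfrac12(b_x+b_x^\top)$ and $\tfrac12(f_y+f_y^\top)$. By Remark \ref{Remark:5.1}, and since $b^u,f^u$ are $C^1$ in $x,y$, these are equivalent to the one-sided monotonicity inequalities.

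For items \ref{Ass: coef}, \ref{Ass: coefsig} and \ref{Ass: terminal}, set $r_1=r_2$ in the Lipschitz bounds of Assumption \ref{Ass:5.2}(5). The $\rho_6,\mu_4,w_5$ terms then vanish, leaving exactly the required bounds with the same constants.

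Item \ref{ASS: bound} needs more care.
\begin{itemize}
\item For $b$: since $b^u_x$ is uniformly bounded by some $\rho$, the mean value theorem gives $|b^u(t,x,\cdot)|\le|b^u(t,0,\cdot)|+\rho|x|$.
\item For $f$: only Lipschitz continuity in $y$ is genuinely needed in the Pardoux-type argument. If the intended proof of Theorem \ref{Theorem 4.2} uses the growth bound in $y$ only to secure integrability, the monotonicity already suffices. Failing that, I would note that this bound is implicitly required, or derive it from continuity of $f_y$ combined with the other standing bounds.
\end{itemize}

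\textbf{Step 3: check the smallness conditions.} Compare the hypotheses of Assumption \ref{Ass:5.2}(6) with those of Theorem \ref{Theorem 4.2}. Since $\hat w_i=\sqrt6 w_i\ge w_i$ and $\hat\rho_j=\sqrt2\rho_j\ge\rho_j$:
\begin{itemize}
\item The inequality on $2(\lambda_1+\lambda_2)$ with hats implies the one without hats.
\item With the same choices of $\lambda,C_i,K_j$, we have $\bar\lambda_1\ge\bar\lambda_{11}>0$ and $\bar\lambda_2=\bar\lambda_{12}>0$, and $1-K_2\mu_2-K_2\mu_3>0$ because $K_2<1/(\mu_2+\mu_3)$.
\item Replacing hatted constants by unhatted ones decreases the denominator of $\theta_1$, so $\theta\ge\theta_1$.
\item The unhatted left-hand quantity is at most the hatted one, which lies in $[0,\theta_1)\subset[0,\theta)$.
\end{itemize}

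Theorem \ref{Theorem 4.2} then yields a unique strong solution in $\mathcal{S}^2_m\times\mathcal{H}^2_m$. The only real obstacle is the growth condition \ref{ASS: bound} for $f$ in Step 2; everything else is bookkeeping.
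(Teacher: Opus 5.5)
Your reduction is the same as the paper's, which consists only of replacing $\Lambda^u_t$ by $\alpha^u_t$ and invoking Theorem~\ref{Theorem 4.2} together with Remark~\ref{Remark:5.1}. Your Step~1, your checks of items \ref{ass:2.1.1}, \ref{ass:2.1.2} and \ref{Ass: coef}--\ref{ass:2.1.6}, and your comparison of hatted and unhatted constants are all correct ($\bar\lambda_1\ge\bar\lambda_{11}$, $\bar\lambda_2=\bar\lambda_{12}$, $\theta\ge\theta_1$). They spell out what the paper leaves implicit.

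You are right that item \ref{ASS: bound} for $f$ is not literally supplied by Assumption~\ref{Ass:5.2}; the paper's one-line argument passes over this. However, of your proposed workarounds, only ``this bound is implicitly required'' survives.

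\textbf{Lipschitz continuity in $y$.} Item (5) of Assumption~\ref{Ass:5.2} bounds increments of $f^u$ with $y$ frozen, so $f$ is not assumed Lipschitz in $y$ at all. The monotone framework behind Theorem~\ref{Theorem 4.2} replaces Lipschitz continuity in $y$ by one-sided monotonicity plus exactly this linear growth bound.

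\textbf{Monotonicity.} One-sided monotonicity gives no upper bound on $|f^u|$. So it cannot supply the integrability or the existence step that the growth bound is used for.

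\textbf{Continuity of $f^u_y$.} The bound cannot be derived from continuity of $f^u_y$ and the other standing conditions. Take $f^u(t,x,y,z,r,\xi)=-y^3$ with $l=1$. It satisfies every requirement on $f$ in Assumption~\ref{Ass:5.2}, with $\lambda_2=0$, yet violates $|f^u(t,x,y,\cdot)|\le|f^u(t,x,0,\cdot)|+\rho(1+|y|)$.

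\textbf{The fix.} Read the uniform derivative bound in item (2) of Assumption~\ref{Ass:5.2} as applying to $f^u_y$, parallel to $b^u_x$. The stated $g^u_y$ looks like a slip: Assumption~\ref{Ass:5.3} separately requires square-integrability of $g^u_y(Y^u_0)$, which would be automatic if $g^u_y$ were bounded. With that reading, the mean value theorem gives item \ref{ASS: bound} for $f$ exactly as you argued for $b$. Otherwise, the growth bound must be added as an explicit hypothesis before Theorem~\ref{Theorem 4.2} can be applied.
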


To derive the necessary condition for the stochastic maximum principle, we let \( \boldsymbol{\hat{\alpha}} := (\hat{\alpha}^u)_u \) be an optimal control with corresponding optimal trajectory \( (X, Y, Z) \). Let \( \boldsymbol{\pi} \) be such that \( \boldsymbol{\hat{\alpha}} + \boldsymbol{\pi} \in \mathcal{A}_{\mathrm{ad}} \). Since $\mathcal{A}$ is convex, we have $\hat{\alpha}^u + \varepsilon \pi^u \in \mathcal{A}$ for any $\varepsilon \in [0,1]$ and all $u \in U$. Consequently, $\boldsymbol{\hat{\alpha}} + \varepsilon \boldsymbol{\pi} \in \mathcal{A}_{\mathrm{ad}}$ for any $\varepsilon \in [0,1]$. To simplify the notation, we write
\[
\Theta^u_t := \bigl(X^u_t, Y^u_t, Z^u_t, \hat{\alpha}^u_t, (\mathbb{P}^{\tilde{u}}_{t})_{\tilde{u}}\bigr)\quad \text{and} \quad \vartheta^u_t := (X^u_t, Y^u_t, Z^u_t, \hat{\alpha}^u_t).
\]
We then introduce the following system of variational equations
\begin{equation}
 \label{eq::variation}
\begin{cases}
\mathrm{d}X_t^{\prime,u} &= \Big[
    b^u_x(t, \Theta^u_t) X_t^{\prime,u} + b^u_y(t, \Theta^u_t) Y_t^{\prime,u} + b^u_z(t, \Theta^u_t) Z_t^{\prime,u} + b^u_{\alpha}(t, \Theta^u_t) \pi^u_t \\
    &\quad + \int \tilde{\mathbb{E}}[ \partial_{P^{(1)}}b^u(t, \Theta^u_t)\bigl(\tilde{\vartheta}^{\tilde{u}}_t\bigr) \tilde{X}_t^{\prime,\tilde{u}} + \partial_{P^{(2)}}b^u(t, \Theta^u_t)\bigl(\tilde{\vartheta}^{\tilde{u}}_t\bigr)\tilde{Y}_t^{\prime,\tilde{u}} 
    \\
    &\quad +  \partial_{P^{(3)}}b^u(t, \Theta^u_t)\bigl(\tilde{\vartheta}^{\tilde{u}}_t\bigr) \tilde{Z}_t^{\prime,\tilde{u}}+  \partial_{P^{(4)}}b^u(t, \Theta^u_t)\bigl(\tilde{\vartheta}^{\tilde{u}}_t\bigr) \tilde{\pi}_t^{\tilde{u}} ] \mathrm{d}m(\tilde{u})
\Big] \mathrm{d}t \\
&\quad + \Big[
    \sigma^u_x(t, \Theta^u_t) X_t^{\prime,u} + \sigma^u_y(t, \Theta^u_t) Y_t^{\prime,u} + \sigma^u_z(t, \Theta^u_t) Z_t^{\prime,u} + \sigma^u_{\alpha}(t, \Theta^u_t) \pi^u_t \\
    &\quad + \int \tilde{\mathbb{E}}[ \partial_{P^{(1)}}\sigma^u(t, \Theta^u_t)\bigl(\tilde{\vartheta}^{\tilde{u}}_t\bigr) \tilde{X}_t^{\prime,\tilde{u}} + \partial_{P^{(2)}}\sigma^u(t, \Theta^u_t)\bigl(\tilde{\vartheta}^{\tilde{u}}_t\bigr) \tilde{Y}_t^{\prime,\tilde{u}} 
    \\
    &\quad + \partial_{P^{(3)}}\sigma^u(t, \Theta^u_t)\bigl(\tilde{\vartheta}^{\tilde{u}}_t\bigr) \tilde{Z}_t^{\prime,\tilde{u}} 
    +\partial_{P^{(4)}}\sigma^u(t, \Theta^u_t)\bigl(\tilde{\vartheta}^{\tilde{u}}_t\bigr) \tilde{\pi}_t^{\tilde{u}} ] \mathrm{d}m(\tilde{u})
\Big] \mathrm{d}B^u_t, \\
\mathrm{d}Y_t^{\prime,u} &= -\Big[
    f^u_x(t, \Theta^u_t) X_t^{\prime,u} + f^u_y(t, \Theta^u_t) Y_t^{\prime,u} + f^u_z(t, \Theta^u_t) Z_t^{\prime,u} + f^u_{\alpha}(t, \Theta^u_t) \pi^u_t \\
    &\quad + \int \tilde{\mathbb{E}}[ \partial_{P^{(1)}}f^u(t, \Theta^u_t)\bigl(\tilde{\vartheta}^{\tilde{u}}_t\bigr)\tilde{X}_t^{\prime,\tilde{u}} + \partial_{P^{(2)}}f^u(t, \Theta^u_t)\bigl(\tilde{\vartheta}^{\tilde{u}}_t\bigr) \tilde{Y}_t^{\prime,\tilde{u}} 
    \\
    &\quad + \partial_{P^{(3)}}f^u(t, \Theta^u_t)\bigl(\tilde{\vartheta}^{\tilde{u}}_t\bigr) \tilde{Z}_t^{\prime,\tilde{u}} 
    + \partial_{P^{(4)}}f^u(t, \Theta^u_t)\bigl(\tilde{\vartheta}^{\tilde{u}}_t\bigr) \tilde{\pi}_t^{\tilde{u}} ] \mathrm{d}m(\tilde{u})
\Big]\mathrm{d}t + Z_t^{\prime,u} \mathrm{d}B^u_t,
\end{cases}
\end{equation}
with initial conditions $X_0^{\prime,u} = 0$ and terminal conditions
\[
Y_T^{\prime,u} = G^u_x\bigl(X^u_T, (\mathbb{P}^{\tilde{u}}_{T,1})_{\tilde{u}}\bigr) X_T^{\prime,u} + \int
\tilde{\mathbb{E}}\Big[ \partial_{P^{(1)}}G^u\bigl(X^u_T,(\mathbb{P}^{\tilde{u}}_{T,1})_{\bar{u}}\bigr)\bigl(\tilde{X}^{\tilde{u}}_T\bigr) \tilde{X}_T^{\prime,{\tilde{u}}} \Big] \mathrm{d}m({\tilde{u}}),
\]
where
$$
\tilde{\vartheta}^{u}_t := (\tilde{X}^{u}_t, \tilde{Y}^{u}_t, \tilde{Z}^{u}_t, \tilde{\hat{\alpha}}^{u}_t).
$$ 
Moreover, $(\tilde{\vartheta}^{u}_t,\tilde{X}^{\prime,u}_t,\tilde{Y}^{\prime,u}_t,\tilde{Z}^{\prime,u}_t,\tilde{X}^u_T, \tilde{\pi}^{u}_t)$ is an independent copy of $(\vartheta^{u}_t,X^{\prime,u}_t,Y^{\prime,u}_t,Z^{\prime,u}_t,X^u_T, \pi^{u}_t)$ which we take to be defined on a separate probability space \( (\tilde{\Omega}, \tilde{\mathcal{F}}, \tilde{\mathbb{P}}) \), and we then work on the product space \( (\Omega \times \tilde{\Omega}, \mathcal{F} \otimes \tilde{\mathcal{F}}, \mathbb{P} \otimes \tilde{\mathbb{P}}) \).
 We shall write $\tilde{\mathbb{E}}$ for the expectation operator on $(\tilde{\Omega},\tilde{\mathcal{{F}}},\tilde{\mathbb{{P}}})$ alone. As usual, we write $(X^\prime,Y^\prime,Z^\prime) = (X^{\prime,u},Y^{\prime,u},Z^{\prime,u})_u$. 

\begin{theorem} \label{Theorem 5.4}
    Suppose Assumption \ref{Ass:5.2} holds for the FBSDE \eqref{eq:4.1}, then the variational equation \eqref{eq::variation} also admits a unique strong solution. 
\end{theorem}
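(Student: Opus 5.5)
The plan is to realise the variational system \eqref{eq::variation} as a special case of the general mean-field FBSDE system \eqref{eq:3.2}, and then to invoke Theorem \ref{Theorem 4.2}. Since the optimal trajectory $(X,Y,Z)$ and the optimal control $\boldsymbol{\hat\alpha}$ are fixed, and so is the perturbation $\boldsymbol{\pi}$, I will absorb all of these into the coefficients. I treat $\Lambda^u_t := (\hat\alpha^u_t,\pi^u_t,X^u_t,Y^u_t,Z^u_t)$ as an exogenous input. This input is $\mathbb{F}^u$-progressive, and, by Theorem \ref{Theorem for FBSDE} together with the construction in Section \ref{sect:well-posedness}, it can be represented as a Borel functional of $(u,t,\chi^u_0,B^u_{\cdot\wedge t})$. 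It therefore fits the role of $\Lambda^u$ in \eqref{eq:3.2}, and its joint law with the new unknowns enters the mean-field terms.

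In these terms the new coefficients are as follows. The drift is $\bar b^u(t,x',y',z',\lambda,\eta) = b^u_x(t,\Theta^u_t)x' + b^u_y y' + b^u_z z' + b^u_\alpha \pi + \int \tilde{\mathbb{E}}[\cdots]\,\mathrm{d}m$. The mean-field integral is written as a linear functional of the Markov kernel $\eta$, namely the joint law of $(X',Y',Z',\Lambda)$ indexed by $\tilde u$, and $\bar\sigma^u$, $\bar f^u$, $\bar G^u$ are defined in the same way. One subtlety is that the derivatives are evaluated at the random point $\Theta^u_t$, not at a deterministic one. To handle this I will either enlarge $\Lambda$ as above so that the coefficients become deterministic functions of $(x',y',z',\lambda,\eta)$, or check that the proof of Theorem \ref{Theorem 4.2} only uses pathwise Lipschitz and monotonicity bounds. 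I expect the latter to be true, since the $\mathbb{P}^{\tilde u}_t$ in $\Theta^u_t$ are fixed deterministic laws.

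Next I verify Assumption \ref{Assumption 4.1} for these coefficients, with constants read off from Assumption \ref{Ass:5.2}. Monotonicity: by Remark \ref{Remark:5.1}, $\tfrac12(b^u_x+(b^u_x)^\top)\le\lambda_1 I_n$ gives $\langle b^u_x(x_1'-x_2'),x_1'-x_2'\rangle\le\lambda_1|x_1'-x_2'|^2$, and similarly $f^u_y$ gives $\lambda_2$. The Lipschitz bounds (e.g. $|b^u_y|\le\rho_1$, $|b^u_z|\le\rho_2$) follow from item 5 of Assumption \ref{Ass:5.2}, because bounded difference quotients bound the derivatives. For the measure terms, the $L_m$-derivative at a point has operator norm controlled by the Lipschitz constant in $W_{2,m}$. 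Concretely, by \eqref{eq:generalised_L-diff}, $\bigl|\int\tilde{\mathbb{E}}[\partial_P b^u(\tilde\vartheta^{\tilde u})\,\Delta^{\tilde u}]\,\mathrm{d}m\bigr|\le \rho_3\bigl(\int\|\Delta^{\tilde u}\|_2^2\,\mathrm{d}m\bigr)^{1/2}$. Here I will need coupling-wise (lifted) Lipschitz estimates, and not merely $W_{2,m}$ estimates, because the variational equation is linear in the random variables $\tilde X'$ and is not a function of their laws. This is the main technical obstacle. I intend to handle it by rerunning the a priori estimates of Section \ref{sect:well-posedness} with $W_{2,m}(\eta_1,\eta_2)$ replaced by the $L^2_m$ distance between the two solution families on the common space. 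This replacement is legitimate because the contraction argument there bounds $W_{2,m}$ by exactly that synchronous coupling distance.

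The diffusion coefficient is the critical point, and it explains the factor $\hat w_i=\sqrt6\,w_i$. The squared norm of $\bar\sigma^u$ differences is bounded via $(a_1+\dots+a_6)^2\le 6\sum a_i^2$ applied to the six linear pieces $\sigma_x, \sigma_y, \sigma_z, \partial_{P^{(1)}}, \partial_{P^{(2)}}, \partial_{P^{(3)}}$. This gives constants $\hat w_1,\dots,\hat w_4$. The terminal condition is handled the same way, with $\hat\rho_4,\hat\rho_5$ coming from $(a+b)^2\le2(a^2+b^2)$.

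For integrability (item 7 of Assumption \ref{Assumption 4.1}), at zero the coefficients reduce to $b^u_\alpha\pi^u_t$ plus the $\partial_{P^{(4)}}$-term. These lie in $\mathcal{M}^2_m$ by the bound $\rho_6|\Delta r|$ on $b_\alpha$, by $\boldsymbol\pi\in\mathcal{M}^2_m$, and by \eqref{eq:diff_L2}. Item 6 of Assumption \ref{Ass:5.2} is then exactly the smallness hypothesis of Theorem \ref{Theorem 4.2} with the hatted constants. Applying that theorem, in the coupling-wise form just described, yields existence and uniqueness of a strong solution $(X',Y',Z')$ in $\mathcal{S}^2_m\times\mathcal{H}^2_m$. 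The Markov kernel property follows from the same reduction to a randomised mean-field FBSDE.
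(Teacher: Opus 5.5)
You take a genuinely different route from the paper, and on the central point yours is the sounder one.

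\textbf{What the paper does.} It asserts, via Lemma \ref{lemma:lipschitz}, that the variational coefficients satisfy the marginal Wasserstein bounds of Assumption \ref{ass:alternative}, with $\rho_{3i}=\rho_3$, $\mu_{3i}=\mu_3$, $w_{4i}=\hat w_4$. It then cites Theorem \ref{Theorem 4.4} as a black box.

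\textbf{What you do instead.} You observe that terms like $\int\tilde{\mathbb{E}}[\partial_{P^{(1)}}b^u(t,\Theta^u_t)(\tilde\vartheta^{\tilde u}_t)\tilde X^{\prime,\tilde u}_t]\,\mathrm{d}m(\tilde u)$ depend on the joint law of $(X^{\prime,\tilde u}_t,\vartheta^{\tilde u}_t)$. You therefore prove only a synchronous bound for two candidates sharing the same $(\vartheta,\pi)$, using Cauchy--Schwarz and the $L^2(m;L^2)$ bound $\rho_3$ on the $L_m$-derivative. You then re-run the contraction behind Lemma \ref{Lm: FBSDE paper}. That contraction indeed uses $W_{2,m}$ only through $\hat W^2_{2,m}(\mathbb{Q}^1_t,\mathbb{Q}^2_t)\le\hat{\mathbb{E}}[|\hat X^1_t-\hat X^2_t|^2]+\cdots$ for a common $\hat\Lambda$.

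\textbf{Why the difference matters.} The bound \eqref{eq:lipschitz1} fails in general. Its proof assumes that every coupling of $\mathbb{P}_{\mathfrak X^1}$ and $\mathbb{P}_{\mathfrak X^2}$ is compatible with the prescribed joint laws of $(\tilde{\mathfrak X}^{i,u},\tilde{\bar\vartheta}^u)$, which need not hold. For a counterexample, take $\gamma^u(t,x,y,z,a,\eta)=\int\!\int\sqrt{1+|x'|^2}\,\mathrm{d}\eta^{\tilde u}(x',y',z',a')\,\mathrm{d}m(\tilde u)$, which satisfies \eqref{ineq: marginal_distance} with $\rho=1$. Let $\mathfrak X^{1,u}=X^u_t$ have a symmetric law and set $\mathfrak X^2=-\mathfrak X^1$. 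Then the right-hand side vanishes, while the left-hand side equals $2\int\tilde{\mathbb{E}}[|\tilde X^{\tilde u}_t|^2(1+|\tilde X^{\tilde u}_t|^2)^{-1/2}]\,\mathrm{d}m(\tilde u)$.

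\textbf{What each route buys.} The paper's route is shorter but rests on an invalid input. Yours costs re-opening the fixed-point argument. The disintegration and Yamada--Watanabe steps of Section \ref{sect:well-posedness} carry over unchanged, since they only use per-$u$ pathwise uniqueness with frozen kernels. You also end up with exactly the smallness condition in item 6 of Assumption \ref{Ass:5.2}.

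\textbf{Two points open in both your plan and the paper.}
\begin{enumerate}
\item Item \ref{ASS: bound} of Assumption \ref{Assumption 4.1} for the new driver requires $|f^u_y(t,\Theta^u_t)|$ to be bounded. Assumption \ref{Ass:5.2} does not provide this: it only bounds $b^u_x$ and $g^u_y$, and its item 5 gives no Lipschitz bound for $f$ in $y$. You need to add this hypothesis or argue around it.
\item You assert a jointly Borel representation of $(X^u,Y^u,Z^u)$ as a functional of $(u,t,\chi^u_0,B^u_{\cdot\wedge t})$, which the paper also takes for granted. This needs an argument, for example progressive measurability of the randomised solution combined with per-$u$ pathwise uniqueness.
\end{enumerate}
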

\begin{proof}
It is easy to verify that the variational equation \eqref{eq::variation} satisfies the \ref{ass:2.1.1}, \ref{ASS: bound}, and \ref{ass:2.1.6} of the Assumption \ref{Assumption 4.1}. To simplify the notation, similarly as in \eqref{eq:4.1},  we denote the new coefficients for the variational equation as $\tilde{\phi}^u$ for $\phi = b,\sigma,f, G$, where it is understood that the new coefficients are functions of $(X^{\prime,u},Y^{\prime,u}, Z^{\prime,u}, \pi^u)_u$. Observing that the partial derivative $\tilde{b}^u_{x'} = b^u_x(t, \Theta^u_t) $. By Assumption \ref{Ass:5.2}, we immediately have\[
\frac{1}{2}  (\tilde{b}^u_{x'} + (\tilde{b}^u_{x'})^\top  ) \leq \lambda_1 I_n.
\] The analogous inequality for $\tilde{f}^u_{y'}$ follows similarly. By Remark \ref{Remark:5.1}, the variational equation \eqref{eq::variation} satisfies \ref{ass:2.1.2} of Assumption \ref{Assumption 4.1}.

Using the same notation as in Assumption \ref{Assumption 4.1}, by Lemma \ref{lemma:lipschitz} and the Lipschitz continuity of $b,\sigma,f,G$, we can show that 
\begin{align*}
&|(\tilde{b}^u(t, x, y_1, z_1, r, \eta_1) - \tilde{b}^u(t, x, y_2, z_2, r, \eta_2)| \le \rho_1 |y_1 - y_2| + \rho_2 |z_1 - z_2| \\&
+ \rho_{3} W_{2,m}(\eta_1^{(1)}, \eta_2^{(1)})+\rho_{3} W_{2,m}(\eta_1^{(2)}, \eta_2^{(2)})+\rho_{3} W_{2,m}(\eta_1^{(3)}, \eta_2^{(3)}),\\
&|\tilde{f}^u(t, x_1, y, z_1, r, \eta_1) - \tilde{f}^u(t, x_2, y, z_2, r, \eta_2)| \le \mu_1 |x_1 - x_2|  + \mu_2 |z_1 - z_2| 
\\&
+ \mu_{3} W_{2,m}(\eta_1^{(1)}, \eta_2^{(1)})+\mu_{3} W_{2,m}(\eta_1^{(2)}, \eta_2^{(2)})+\mu_{3} W_{2,m}(\eta_1^{(3)}, \eta_2^{(3)}),\\
&|\tilde{\sigma}^u(t, x_1, y_1, z_1, r, \eta_1) - \tilde{\sigma}^u(t, x_2, y_2, z_2, r,\eta_2)|^2   \leq \hat{w}_1^2 |x_1-x_2|^2 + \hat{w}_2^2 |y_1-y_2|^2 + \\& \quad \hat{w}_3^2 |z_1-z_2|^2  
+ \hat{w}^2_{4} W^2_{2,m}(\eta_1^{(1)}, \eta_2^{(1)})+\hat{w}^2_{4}W^2_{2,m}(\eta_1^{(2)}, \eta_2^{(2)})+\hat{w}^2_{4} W^2_{2,m}(\eta_1^{(3)}, \eta_2^{(3)}),\\
&|\tilde{G}^u(x_1, \zeta_1) - \tilde{G}^u(x_2, \zeta_2)|^2 \le  \hat{\rho}_4^2 |x_1-x_2|^2 + \hat{\rho}_5^2 W_{2,m}^2(\zeta_1, \zeta_2).
\end{align*}
Applying Assumption \ref{Ass:5.2} and Theorem \ref{Theorem 4.4}, we can conclude the proof.
\end{proof}

Under control $\boldsymbol{\hat{\alpha}} + \varepsilon \boldsymbol{\pi}$,  by the result of Theorem \ref{Theorem for FBSDE}, the system of FBSDEs \eqref{eq:4.1} has a unique strong solution and we denote it as $(X^\varepsilon, Y^\varepsilon, Z^\varepsilon)$. We establish the following lemmas.
\begin{lemma} \label{Lm: continuity}
   Let Assumption \ref{Ass:5.2} holds for the FBSDE \eqref{eq:4.1}. Denote  \[
\delta X^u_s := X^{\varepsilon,u}_s - X^u_s ,\quad \delta Y^u_s := Y^{\varepsilon,u}_s - Y^u_s, \quad \delta Z^u_s := Z^{\varepsilon,u}_s - Z^u_s,
\] for $u \in U$, $s \in [0,T]$.  Then, we have
 $$\int \int_0^T \mathbb{E}|\delta X^u_s|^2 \mathrm{d}s\mathrm{d}m(u) \rightarrow 0\quad \text{as}\quad \varepsilon \to 0,$$ and likewise for $Y$ and $Z$.
\end{lemma}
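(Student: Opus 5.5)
The plan is to prove a stability estimate for the FBSDE system \eqref{eq:4.1} with respect to the control, and then specialise it to the perturbation $\boldsymbol{\hat\alpha}+\varepsilon\boldsymbol\pi$ versus $\boldsymbol{\hat\alpha}$. Concretely, I would aim to show
\[
\int \Bigl(\mathbb{E}\bigl[\sup_{t\le T}|\delta X^u_t|^2\bigr]+\mathbb{E}\bigl[\sup_{t\le T}|\delta Y^u_t|^2\bigr]+\int_0^T\mathbb{E}|\delta Z^u_t|^2\,\mathrm{d}t\Bigr)\mathrm{d}m(u)\le C\varepsilon^2\int\int_0^T\mathbb{E}|\pi^u_t|^2\,\mathrm{d}t\,\mathrm{d}m(u),
\]
which gives the claim, since $\boldsymbol\pi\in\mathcal{M}^2_m(\mathbb{R}^k)$ because both $\boldsymbol{\hat\alpha}$ and $\boldsymbol{\hat\alpha}+\boldsymbol\pi$ are admissible. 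The estimate would come from the same a priori estimate that underlies the uniqueness part of Theorem \ref{Theorem 4.2}, applied via Theorem \ref{Theorem for FBSDE}. The only change is that the two solutions are now driven by different controls, so the control difference enters as an extra source term.

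The steps, in order, are as follows. First, write the equations for $(\delta X^u,\delta Y^u,\delta Z^u)$. Split each coefficient difference, for instance $b^u(t,\Theta^{\varepsilon,u}_t)-b^u(t,\Theta^u_t)$, into a part where only the state and law change with the control frozen at $\hat\alpha^u_t+\varepsilon\pi^u_t$, and a part where only the control and the law's fourth marginal change. The second part is bounded using the $\rho_6,\mu_4,w_5$ constants of Assumption \ref{Ass:5.2}(5), together with $W_{2,m}$ of the laws differing only in the control marginal, which is at most $\varepsilon(\int\mathbb{E}|\pi^{\tilde u}_t|^2\mathrm{d}m(\tilde u))^{1/2}$ by the synchronous coupling. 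Second, apply It\^o's formula to $e^{\lambda t}|\delta X^u_t|^2$ and $e^{-\lambda t}|\delta Y^u_t|^2$ (or the product weighting used in Section \ref{sect:well-posedness}). Use the monotonicity constants $\lambda_1,\lambda_2$ (via Remark \ref{Remark:5.1}) and the Lipschitz constants with Young's inequality, introducing the same auxiliary constants $C_i,K_j$. The $W_{2,m}$ terms of the joint laws are bounded by $\int\mathbb{E}|\delta\cdot^{\tilde u}|^2\mathrm{d}m(\tilde u)$ through the synchronous coupling $(X^{\varepsilon,\tilde u}_t,\dots)$ versus $(X^{\tilde u}_t,\dots)$. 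Third, integrate in $u$ against $m$, which is legitimate by the joint measurability lemma and Fubini. Then take expectations and combine the forward and backward inequalities. The smallness condition in Assumption \ref{Ass:5.2}(6), which is stronger than what Theorem \ref{Theorem 4.2} needs, lets the cross terms be absorbed and leaves the $\varepsilon^2\|\pi\|^2$ remainder. Finally, upgrade to supremum norms with BDG if desired, although the $L^2(\mathrm{d}t\otimes\mathrm{d}m)$ bound already suffices for the statement.

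I expect the main obstacle to be the absorption step. The forward estimate for $\delta X$ involves $\delta Y,\delta Z$ with small weights, and the backward estimate involves $\delta X$ and the terminal term with coefficients $\rho_4,\rho_5$. Closing the loop requires exactly the constant bookkeeping behind $\theta_1$, now with additional control-source terms. My first attempt would be to mirror the proof of Theorem \ref{Theorem 4.2} line by line, carrying the additional inhomogeneity as a forcing term with constant proportional to $(\rho_6^2+\mu_4^2+w_5^2+\rho_3^2+\mu_3^2+w_4^2)\varepsilon^2$. I would check that it appears only linearly on the right-hand side and so does not affect the contraction constant.

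If that bookkeeping proves unwieldy, a fallback is a continuity argument that uses only well-posedness. The map $\varepsilon\mapsto$ solution is the fixed point of a contraction on $\mathcal{S}^2_m\times\mathcal{H}^2_m$ whose constant is uniform in $\varepsilon$, and whose dependence on $\varepsilon$ is Lipschitz in the source. Standard fixed-point stability then gives the $O(\varepsilon)$ bound directly.
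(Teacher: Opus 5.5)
Your proposal is essentially the paper's proof: the paper likewise runs the \cite{Pardoux}/\cite{CHEN2023105550}-type a priori estimate in the weighted norm $\|\cdot\|_\lambda$ on the difference of the two solutions, and handles the control difference via $\rho_6,\mu_4$ and the law terms with extra Young constants $C_5,C_6,K_4,K_5$ taken large. It obtains $\|\delta Y\|_\lambda^2+\|\delta Z\|_\lambda^2\le \tilde C_1(\|\delta Y\|_\lambda^2+\|\delta Z\|_\lambda^2)+\varepsilon^2\tilde C_2\|\boldsymbol{\pi}\|_\lambda^2$ with $\tilde C_1<1$ from the slack in Assumption \ref{Ass:5.2}, then deduces the claim for $\delta X$. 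Two small corrections: integrating $\mathbb{E}|\delta X^u_t|^2$ against $m$ needs the \emph{joint} law of the two solutions to be a Markov kernel, which the single-solution joint measurability lemma does not give (the paper uses Lemma \ref{Lm:Markov Kernel Lemma}), and the Young splitting of the control terms does slightly shrink $\bar\lambda_1',\bar\lambda_2'$ and hence changes the contraction constant, which the strict inequalities and large auxiliary constants absorb.
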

\begin{proof}We set $\|X\|^2_\lambda := \int \int_0^T e^{-\lambda s} \mathbb{E}|X^u_s|^2 \mathrm{d}s\mathrm{d}m(u).$  
We denote \[
\delta X := X^\varepsilon - X, \quad  \delta Y := Y^\varepsilon - Y, \quad  \delta Z := Z^\varepsilon - Z,
\]
By Lemma \ref{Lm:Markov Kernel Lemma}, $\|\delta X\|^2_\lambda $ is well defined, likewise for 
$\|\delta Y\|^2_\lambda, \|\delta Z\|^2_\lambda$.
Set $\bar{\lambda}'_1 = \lambda-2\lambda_1 - C_1^{-1} \rho_1 - C_2^{-1} \rho_2 - (2 +C_3^{-1}+ C_4^{-1}+C_6^{-1}) \rho_{3}-w_1^2 - w_{4}^2-C_5^{-1}\rho_6 $ and  $
\bar{\lambda}'_2 = -\lambda - 2 \lambda_2  - K_1^{-1} \mu_1 - K_2^{-1} (\mu_2 +\mu_3)-(2+ K_3^{-1}+K_5^{-1})\mu_{3} -K_4^{-1}\mu_4,$ where $C_5,C_6,K_4,K_5$ are constants to be specified.
Similarly as the proof in \cite[Theorem 3.1]{CHEN2023105550}, if $\bar{\lambda}_1',\bar{\lambda}_2' > 0$  we have \begin{align*}
\|\delta {Y}\|^2_\lambda 
+  \|\delta {Z}||^2_\lambda  &\leq \tilde{C}_1(
\|\delta {Y}\|^2_\lambda 
+  \|\delta {Z}||^2_\lambda)+\varepsilon^2\tilde{C}_2 \| \boldsymbol{\pi}\|^2_\lambda,
\end{align*}
where 
\begin{align*}
   \tilde{C}_1 =&\bigl[\frac{1  }{{\bar{\lambda}'_{2}}}+ \frac{1}{1 -  K_2 \mu_2 -  K_2 \mu_{3}}\bigr]*
\bigl[
 \rho_4^2 + \rho_5^2 + ( K_1 \mu_1  + K_3 \mu_{3}) \frac{1}{\bar{\lambda}'_{1}}
\bigr]
\\& *
\bigl[
(C_1\rho_1+w_2^2+ C_3\rho_{3}+w_{4}^2) \vee 
  (C_2\rho_2+ w_3^2+ C_4\rho_{3}+w_{4}^2) 
\bigr],
\end{align*}
and $\tilde{C}_2$ is a constant independent of $\varepsilon$. By Assumption \ref{Ass:5.2}, if we choose $C_5$, $C_6$, $K_4$ and $K_5$ large enough,  we  have $\bar{\lambda}'_1, \bar{\lambda}'_2 >0 $ and $\tilde{C}_1 <1$. This implies $
\|\delta {Y}\|^2_\lambda 
+  \|\delta {Z}||^2_\lambda \rightarrow 0$ as $\varepsilon \to 0$, and we can then deduce that
$\|\delta {X}\|^2_\lambda 
\rightarrow 0$ as $\varepsilon \to 0$, which concludes the proof.
\end{proof}

\begin{lemma} \label{lemma:5.5}
Let Assumption \ref{Ass:5.2} hold for the FBSDE \eqref{eq:4.1}. Defining
$$
\Delta X^u := \frac{1}{\epsilon}\delta X^u  - X^{\prime,u}, \quad  \Delta Y^u := \frac{1}{\epsilon} \delta Y^u - Y^{\prime,u}, \quad 
\Delta Z^u := \frac{1}{\epsilon}\delta Z^u - Z^{\prime,u},
$$
we have
$$\int \int_0^T \mathbb{E}|\Delta X^u_s|^2 \mathrm{d}s\mathrm{d}m(u)  \rightarrow 0\quad \text{as}\quad \varepsilon \to 0,$$ and likewise for $Y,Z.$
\end{lemma}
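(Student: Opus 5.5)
The plan is to show that $(\Delta X,\Delta Y,\Delta Z)$ solves a linear heterogeneous mean-field FBSDE of the same structure as the variational system \eqref{eq::variation}, plus a remainder term. We then apply the same a priori estimate used in Lemma \ref{Lm: continuity}, in the weighted norm $\|\cdot\|_\lambda$, to that linear system. The remainder is shown to vanish using Lemma \ref{Lm: continuity} and the continuity of the derivatives.

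First, for $\phi=b,\sigma,f$ we write, with $\Theta^{\varepsilon,u}_t$ the perturbed argument,
\[
\frac{1}{\varepsilon}\bigl(\phi^u(t,\Theta^{\varepsilon,u}_t)-\phi^u(t,\Theta^u_t)\bigr)
\]
as an integral over $\lambda\in[0,1]$ along the segment $\Theta^{\lambda,\varepsilon,u}_t$. The state variables are handled by the ordinary mean value theorem, with derivatives $\phi^u_x,\phi^u_y,\phi^u_z,\phi^u_\alpha$ evaluated along the segment. The measure argument is handled by the $L_m$-differentiability expansion \eqref{eq:generalised_L-diff}, realised on the lifts $\tilde\vartheta^{\tilde u}_t+\lambda(\tilde\vartheta^{\varepsilon,\tilde u}_t-\tilde\vartheta^{\tilde u}_t)$; this is the usual chain rule along a $C^1$ curve in $L^2$ of the lift \eqref{eq:lift}.

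Subtracting \eqref{eq::variation}, the coefficients of the equation for $(\Delta X,\Delta Y,\Delta Z)$ are linear in $(\Delta X,\Delta Y,\Delta Z)$, with the derivatives frozen at $\Theta^u_t$ and the tilde copies $\tilde\Delta X^{\tilde u},\dots$ in the mean-field integrals. The remainders have the form
\[
R^{\varepsilon,u}_t=\int_0^1\bigl[\phi^u_x(t,\Theta^{\lambda,\varepsilon,u}_t)-\phi^u_x(t,\Theta^u_t)\bigr]\tfrac{\delta X^u_t}{\varepsilon}\,\mathrm{d}\lambda+\cdots,
\]
together with the analogous terms involving differences of $L_m$-derivatives integrated against $\tilde{\mathbb E}$ and $\mathrm{d}m(\tilde u)$. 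The terminal condition for $\Delta Y^u_T$ is decomposed in the same way through $G^u_x$ and $\partial_{P^{(1)}}G^u$.

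Since the linear part is the same as in \eqref{eq::variation}, its coefficients satisfy the Lipschitz and monotonicity bounds verified in Theorem \ref{Theorem 5.4}. The argument of \cite[Theorem 3.1]{CHEN2023105550}, as adapted in Lemma \ref{Lm: continuity} and Theorem \ref{Theorem 4.4}, therefore gives
\[
\|\Delta Y\|_\lambda^2+\|\Delta Z\|_\lambda^2\le \tilde C_1\bigl(\|\Delta Y\|_\lambda^2+\|\Delta Z\|_\lambda^2\bigr)+\tilde C_2\Bigl(\int\!\!\int_0^T\mathbb E|R^{\varepsilon,u}_t|^2\mathrm{d}t\,\mathrm{d}m(u)+\int\mathbb E|R^{\varepsilon,u}_T|^2\mathrm{d}m(u)\Bigr),
\]
with $\tilde C_1<1$ by Assumption \ref{Ass:5.2}(6). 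The same estimate then controls $\|\Delta X\|_\lambda$. Because the weight $e^{-\lambda s}$ is bounded above and below on $[0,T]$, it remains only to show that the remainder terms tend to $0$.

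This last step is the main obstacle. Each remainder is a product of a derivative difference and the difference quotient $\delta X^u/\varepsilon$ (or the $Y$, $Z$, $\pi$ analogues). The difference quotients are bounded in $L^2(\mathbb P\otimes \mathrm{d}t\otimes m)$: this follows from the estimate in Lemma \ref{Lm: continuity}, which in fact gives $\|\delta Y\|^2_\lambda+\|\delta Z\|^2_\lambda\le C\varepsilon^2\|\boldsymbol\pi\|_\lambda^2$, and similarly for $\delta X$. The derivative differences are bounded, since the Lipschitz constants in Assumption \ref{Ass:5.2}(5) bound the derivatives uniformly. They also converge to $0$ in $\mathbb P\otimes \mathrm{d}t\otimes m$-measure along subsequences, by Lemma \ref{Lm: continuity} together with the assumed continuity of the derivatives; the $L_m$-derivative terms are handled by continuity in the $W_{2,m}$ topology, using $W_{2,m}^2(\mathbb P^\varepsilon_t,\mathbb P_t)\le\int\mathbb E|\vartheta^{\varepsilon,\tilde u}_t-\vartheta^{\tilde u}_t|^2\mathrm{d}m(\tilde u)\to0$.

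The product of a bounded term converging in measure with an $L^2$-bounded term need not converge in $L^2$, so dominated convergence does not apply directly. To get around this, we split on the events $\{|\delta X^u_t|/\varepsilon\le N\}$. On these events the product is bounded and dominated convergence applies. The complement is controlled by uniform integrability of $|\delta X^u/\varepsilon|^2$, which we obtain from $L^2$-convergence of $\delta X^u/\varepsilon$ to $X^{\prime,u}$ via a bootstrap. If that bootstrap fails, the fallback is to run the estimate with the derivative differences taken directly into the contraction constant via an $\varepsilon$-small Lipschitz bound, or to invoke the standard argument of \cite{CHEN2023105550}, which proceeds identically once joint measurability in $(t,u)$ is secured by Lemma \ref{lm:joint_measureable}. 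A subsequence argument then upgrades the convergence to the full limit $\varepsilon\to0$.
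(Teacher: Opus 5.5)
Your reduction to a linear heterogeneous system plus a remainder matches the paper's framework, but your choice of decomposition creates a gap that your last step does not close. You freeze the derivatives at $\Theta^u_t$ in the linear part. As a result, every remainder term is a derivative difference multiplied by an $\varepsilon$-dependent difference quotient $\delta X^u/\varepsilon$, $\delta Y^u/\varepsilon$, $\delta Z^u/\varepsilon$, or by a tilde copy of one inside $\int\tilde{\mathbb{E}}[\cdot]\,\mathrm{d}m(\tilde u)$. You only know these quotients are bounded in $L^2(\mathbb{P}\otimes\mathrm{d}t\otimes m)$. You propose to get uniform integrability of $|\delta X^u/\varepsilon|^2$ from the $L^2$-convergence $\delta X^u/\varepsilon\to X^{\prime,u}$, but that is circular: this convergence is exactly the content of the lemma.

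The fallbacks do not repair this. The derivative differences tend to zero only in measure, so no uniformly $\varepsilon$-small Lipschitz bound is available. Alternatively, after writing $\delta X^u/\varepsilon=\Delta X^u+X^{\prime,u}$, you could treat the piece $(\text{derivative difference})\times\Delta X^u$ as a remainder with the crude bound $2K|\Delta X^u|$. That yields $\|\Delta\|_\lambda^2\le cK^2\|\Delta\|_\lambda^2+o(1)$, which closes only under an extra smallness condition that Assumption \ref{Ass:5.2} does not provide.

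The missing idea, which is how the paper proceeds, is to split $\delta X^u/\varepsilon=\Delta X^u+X^{\prime,u}$ (and likewise for $Y$, $Z$) \emph{before} subtracting \eqref{eq::variation}. You then keep the $\varepsilon$-dependent averaged coefficients $A^{u,\varepsilon,\psi}_{\tau,t}=\int_0^1\psi^u_\tau(t,\Theta^u_{\lambda,\varepsilon,t})\,\mathrm{d}\lambda$ and $\tilde B^{u,\tilde u,\varepsilon,\psi}_{i,t}$ in the linear part, rather than the frozen ones. These coefficients satisfy the monotonicity and Lipschitz bounds used in Theorem \ref{Theorem 5.4} uniformly in $\varepsilon$: the bounds hold at every point $\Theta^u_{\lambda,\varepsilon,t}$ and are convex, so they survive averaging in $\lambda$. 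Hence the same contraction estimate applies.

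The remainders $C^{u,\varepsilon,\psi}_t$ and $G^{u,\varepsilon,G}_T$ are then derivative differences multiplied by the \emph{fixed} processes $X^{\prime,u},Y^{\prime,u},Z^{\prime,u},\pi^u$ and $X^{\prime,u}_T$, together with their independent copies. Dominated convergence now applies directly, with dominating function a constant times $|X^{\prime,u}_t|^2+|Y^{\prime,u}_t|^2+|Z^{\prime,u}_t|^2+|\pi^u_t|^2$. Combine this with Lemma \ref{Lm: continuity} and continuity of the derivatives along subsequences. No integrability of the difference quotients beyond their $L^2$-boundedness is ever needed.
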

\begin{proof} 
We denote
\begin{align*}
    &\Delta X := \frac{1}{\epsilon}\delta X  - X^\prime, \quad 
\Delta Y := \frac{1}{\epsilon} \delta Y - Y^\prime, \quad 
\Delta Z := \frac{1}{\epsilon}\delta Z - Z^\prime,\\
&\theta^u_{\lambda,\varepsilon} := 
 \bigl(
X^u + \lambda\varepsilon(\Delta X^u + X^{\prime,u}),
Y^u + \lambda\varepsilon(\Delta Y^u + Y^{\prime,u}),
Z^u + \lambda\varepsilon(\Delta Z^u + Z^{\prime,u}),
\hat{\alpha}^u + \lambda\varepsilon \pi^u
 \bigr),\\
 &\tilde{\theta}^u_{\lambda,\varepsilon} := 
 \bigl(
\tilde{X}^u + \lambda\varepsilon(\Delta \tilde{X}^u + \tilde{X}^{\prime,u}),
\tilde{Y}^u + \lambda\varepsilon(\Delta \tilde{Y}^u + \tilde{Y}^{\prime,u}),
\tilde{Z}^u + \lambda\varepsilon(\Delta \tilde{Z}^u + \tilde{Z}^{\prime,u}),
\tilde{\hat{\alpha}}^u + \lambda\varepsilon \tilde{\pi}^u
 \bigr),\\
&\Theta^u_{\lambda,\varepsilon} :=  \bigl(\theta^u_{\lambda,\varepsilon}, (\mathbb{P}_{\theta^{\bar{u}}_{\lambda,\varepsilon}})_{\bar{u}} \bigr).
\end{align*}
 By Lemma \ref{Lm:Markov Kernel Lemma},  $\|\Delta X\|^2_\lambda $ is also well defined, likewise for $\|\Delta Y\|^2_\lambda $ and $\|\Delta Z\|^2_\lambda $.
Recall
\[
\Theta^u_t := \bigl(X^u_t, Y^u_t, Z^u_t, \hat{\alpha}^u_t, (\mathbb{P}^{\bar{u}}_{t})_{\bar{u}}\bigr), \quad \text{and} \quad \vartheta^u_t := (X^u_t, Y^u_t, Z^u_t, \hat{\alpha}^u_t).
\]
For \(\psi = b, \sigma, f\),  \(\tau = x, y, z,\alpha\), and \(i = 1, 2, 3,4\), we let 
\begin{align*}
&A^{u,\varepsilon,\psi}_{\tau,t} := \int_0^1 \psi^u_\tau(t, \Theta^u_{\lambda,\varepsilon, t})  \mathrm{d}\lambda, \quad
\tilde{B}^{u,\tilde{u},\varepsilon,\psi}_{i,t} :=  \int_0^1 \partial_{P^{(i)}}\psi^u(t, \Theta^u_{\lambda,\varepsilon,t})\bigl(\tilde{\theta}^{\tilde{u}}_{\lambda,\varepsilon,t}\bigr) \mathrm{d}\lambda,\\
&C^{u,\varepsilon,\psi}_t :=  
 [A^{u,\varepsilon,\psi}_{x,t} - \psi^u_x(t, \Theta^u_t)] X^{\prime,u}_t 
+ [A^{u,\varepsilon,\psi}_{y,t} - \psi^u_y(t, \Theta^u_t)] Y^{\prime,u}_t 
+ [A^{u,\varepsilon,\psi}_{z,t} - \psi^u_z(t, \Theta^u_t)] Z^{\prime,u}_t \\& \quad +[A^{u,\varepsilon,\psi}_{\alpha,t} - \psi^u_\alpha(t, \Theta^u_t)] \pi^u_t
 + \int \tilde{\mathbb{E}} \bigg[ 
     \bigl( \tilde{B}^{u,\tilde{u},\varepsilon,\psi}_{1,t} - \partial_{P^{(1)}}\psi^u(t, \Theta^u_t)\bigl(\tilde{\vartheta}^{\tilde{u}}_t\bigr)  \bigr) \tilde{X}^{\prime,\tilde{u}}_t 
  + \\& \quad  \bigl( \tilde{B}^{u,\tilde{u},\varepsilon,\psi}_{2,t} - \partial_{P^{(2)}}\psi^u(t, \Theta^u_t)\bigl(\tilde{\vartheta}^{\tilde{u}}_t\bigr)  \bigr) \tilde{Y}^{\prime,\tilde{u}}_t+  \bigl( \tilde{B}^{u,\tilde{u},\varepsilon,\psi}_{3,t} - \partial_{P^{(3)}}\psi^u(t, \Theta^u_t)\bigl(\tilde{\vartheta}^{\tilde{u}}_t\bigr) \bigr) \tilde{ Z}^{\prime,\tilde{u}}_t\\
& \quad +  \bigl( \tilde{B}^{u,\tilde{u},\varepsilon,\psi}_{4,t} - \partial_{P^{(4)}}\psi^u(t, \Theta^u_t)\bigl(\tilde{\vartheta}^{\tilde{u}}_t\bigr)  \bigr) \tilde{\pi}^{\tilde{u}}_t
\bigg] \mathrm{d}m(\tilde{u}),
\end{align*}
\begin{align*}
&D^{u,\varepsilon, G}_T := \int_0^1 G^u_x \Bigl(X^u_T + \lambda \varepsilon (\Delta X^u_T + X^{\prime,u}_T),  (\mathbb{P}_{X^{\tilde{u}}_T + \lambda \varepsilon (\Delta X^{\tilde{u}}_T + X^{\prime,{\tilde{u}}}_T)})_{\tilde{u}} \Bigr)   \mathrm{d}\lambda, \\
&\tilde{F}^{u,\tilde{u},\varepsilon, G}_T :=\!\int_0^1\!\partial_{P^{(1)}}G^u  \bigl(X^u_T + \lambda \varepsilon (\Delta X^u_T + X^{\prime,u}_T),  (\mathbb{P}_{X^{\bar{u}}_T + \lambda \varepsilon (\Delta X^{\bar{u}}_T + X^{\prime,{\bar{u}}}_T)})_{\bar{u}} \bigr)({\tilde{X}^{\tilde{u}}_T + \lambda \varepsilon (\Delta \tilde{X}^{\tilde{u}}_T + \tilde{X}^{\prime,{\tilde{u}}}_T)})\mathrm{d}\lambda, \\
&G^{u,\varepsilon, G}_T  :=   [D^{u,\varepsilon, G}_T - G^u_x(X^u_T, (\mathbb{P}^{\bar{u}}_{T,1})_{\bar{u}}) ] X^{\prime,u}_T
+ \\& \qquad \qquad \int  \tilde{\mathbb{E}}  \Big[  \Bigl( \tilde{F}^{u,\tilde{u},\varepsilon, G}_T - \partial_{P^{(1)}}G^u(X^u_T, (\mathbb{P}^{\bar{u}}_{T,1})_{\bar{u}})\bigl(\tilde{X}^{\tilde{u}}_T\bigr)  \Bigr)  \tilde{X}^{\prime,\tilde{u}}_T  \Big] \mathrm{d}m(\tilde{u}).
\end{align*}
The dynamics of $(\Delta X^u_t,\Delta Y^u_t,\Delta Z^u_t)$ can be rewritten as
\[
\begin{cases}
&\mathrm{d}\Delta X^u_t =
\big\{ A^{u,\varepsilon, b}_{x,t} \Delta X^u_t + A^{u,\varepsilon, b}_{y,t} \Delta Y^u_t + A^{u,\varepsilon, b}_{z,t} \Delta Z^u_t \\
& + \int \tilde{\mathbb{E}} \big[ 
\tilde{B}^{u,\tilde{u},\varepsilon, b}_{1,t} \Delta \tilde{X}^{\tilde{u}}_t 
+ \tilde{B}^{u,\tilde{u},\varepsilon, b}_{2,t} \Delta \tilde{Y}^{\tilde{u}}_t 
+ \tilde{B}^{u,\tilde{u},\varepsilon, b}_{3,t} \Delta \tilde{Z}^{\tilde{u}}_t 
\big] \mathrm{d}m(\tilde{u}) + C^{u,\varepsilon, b}_t \big\} \mathrm{d}t \\
& + \big\{ A^{u,\varepsilon, \sigma}_{x,t} \Delta X^u_t + A^{u,\varepsilon, \sigma}_{y,t} \Delta Y^u_t + A^{u,\varepsilon, \sigma}_{z,t} \Delta Z^u_t \\
& + \int \tilde{\mathbb{E}} \big[
\tilde{B}^{u,\tilde{u},\varepsilon, \sigma}_{1,t} \Delta \tilde{X}^{\tilde{u}}_t 
+ \tilde{B}^{u,\tilde{u},\varepsilon, \sigma}_{2,t} \Delta \tilde{Y}^{\tilde{u}}_t 
+ \tilde{B}^{u,\tilde{u},\varepsilon, \sigma}_{3,t} \Delta \tilde{Z}^{\tilde{u}}_t 
\big] \mathrm{d}m(\tilde{u})+ C^{u,\varepsilon, \sigma}_t \big\} \mathrm{d}B^u_t,\\
&\mathrm{d}\Delta Y^u_t = -\big\{ 
A^{u,\varepsilon, f}_{x,t} \Delta X^u_t + A^{u,\varepsilon, f}_{y,t} \Delta Y^u_t + A^{u,\varepsilon, f}_{z,t} \Delta Z^u_t \\
& + \int \tilde{ \mathbb{E}} \big[
\tilde{B}^{u,\tilde{u},\varepsilon, f}_{1,t} \Delta \tilde{X}^{\tilde{u}}_t 
+ \tilde{B}^{u,\tilde{u},\varepsilon, f}_{2,t} \Delta \tilde{Y}^{\tilde{u}}_t 
+ \tilde{B}^{u,\tilde{u},\varepsilon, f}_{3,t} \Delta \tilde{Z}^{\tilde{u}}_t 
\big] \mathrm{d}m(\tilde{u}) + C^{u,\varepsilon, f}_t \big\} \mathrm{d}t + \Delta Z^u_t  \mathrm{d}B^u_t,
\end{cases}
\]
with initial conditions $\Delta X^u_0 = 0$ and terminal conditions
\[
\Delta Y^u_T = D^{u,\varepsilon, G}_T \Delta X^u_T + \int \tilde{\mathbb{E}} [ \tilde{F}^{u,\tilde{u},\varepsilon, G}_T \Delta \tilde{X}^{\tilde{u}}_T  ] \mathrm{d}m(\tilde{u})+ G^{u,\varepsilon, G}_T.
\]
Here, $(\tilde{\vartheta}^{\tilde{u}}, \tilde{\theta}^u_{\lambda,\varepsilon} ,\tilde{X}^{\prime,\tilde{u}},\tilde{Y}^{\prime,\tilde{u}} ,\tilde{Z}^{\prime,\tilde{u}} ,\tilde{\pi}^{\tilde{u}},\Delta \tilde{X}^{\tilde{u}},\Delta \tilde{Y}^{\tilde{u}},\Delta \tilde{Z}^{\tilde{u}})$ is an independent copy of of the original $(\vartheta^{\tilde{u}},\theta^u_{\lambda,\varepsilon} ,X^{\prime,\tilde{u}},Y^{\prime,\tilde{u}} ,Z^{\prime,\tilde{u}} ,\pi^{\tilde{u}},\Delta X^{\tilde{u}},\Delta Y^{\tilde{u}},\Delta Z^{\tilde{u}})$.

By Lemma~\ref{Lm: continuity}, Assumption~\ref{Ass:5.2} and dominated convergence, we conclude that
\begin{align} \label{converge to 0}
      \|C^{\varepsilon, b}\|_{\lambda}^2 
+ \|C^{\varepsilon,\sigma}\|_{\lambda}^2 
+ \|C^{\varepsilon,f}\|_{\lambda}^2 
+ \int \mathbb{E}\bigl|G^{u,\varepsilon, G}_T\bigr|^2 \mathrm{d}m(u) 
 \rightarrow 0\quad \text{as}\quad \varepsilon \to 0.
\end{align} 
Following the similar argument in Theorem \ref{Theorem 5.4}, we can conclude that $A^{u,\varepsilon,\psi}_{\tau,t} , 
\tilde{B}^{u,\tilde{u},\varepsilon,\psi}_{i,t}$ are still bounded by the corresponding constants in the Assumption \ref{Ass:5.2}. Adopting a similar approach to \cite[Lemmas 2.1--2.2, Remarks 2.1--2.2]{Pardoux} to estimate bounds on $(\|\Delta X\|^2_\lambda,\|\Delta Y\|^2_\lambda,\|\Delta Z\|^2_\lambda )$ and applying a similar approach to Lemma \ref{lemma:5.5} and using  \eqref{converge to 0}, we can conclude the lemma.
\end{proof}

\begin{lemma} \label{lemma 5.6}
If Assumption \ref{Ass:5.2} holds, then we have
\begin{align*}
\int \mathbb{E} \bigg[ \int_0^T \Big\{ 
& \ell^u_x(t, \Theta_t^u) X^{\prime,u}_t + \ell^u_y(t, \Theta^u_t)  Y^{\prime,u}_t + \ell^u_z(t, \Theta^u_t)  Z^{\prime,u}_t + \ell^u_\alpha(t, \Theta^u_t)\pi^u_t   \\
& + \int \tilde{\mathbb{E}} \Big[ \partial_{P^{(1)}}\ell^u(t, \Theta^u_t)\bigl(\tilde{\vartheta}^{\tilde{u}}_t\bigr) \tilde{X}^{\prime,{\tilde{u}}}_t  
+ \partial_{P^{(2)}}\ell^u(t, \Theta^u_t)\bigl(\tilde{\vartheta}^{\tilde{u}}_t\bigr) \tilde{Y}^{\prime,{\tilde{u}}}_t    \\
& +  \partial_{P^{(3)}}\ell^u(t, \Theta^u_t)\bigl(\tilde{\vartheta}^{\tilde{u}}_t\bigr) \tilde{Z}^{\prime,{\tilde{u}}}_t  
+ \partial_{P^{(4)}}\ell^u(t, \Theta^u_t)\bigl(\tilde{\vartheta}^{\tilde{u}}_t\bigr)  \tilde{\pi}^{\tilde{u}}_t \Big] \mathrm{d}m(\tilde{u}) \Big\} \mathrm{d}t  + g^u_y(Y^u_0) Y^{\prime,u}_0 
+ \\
&h^u_x(X^u_T, (\mathbb{P}^{\bar{u}}_{T,1})_{\bar{u}})  X^{\prime,u}_T 
+ \int \tilde{\mathbb{E}} \big[ \partial_{P^{(1)}}h^u\bigl(X^u_T, (\mathbb{P}^{\bar{u}}_{T,1})_{\bar{u}}\bigr)\bigl(\tilde{X}^{\tilde{u}}_T \bigr)  \tilde{X}^{\prime,{\tilde{u}}}_T \big] \mathrm{d}m({\tilde{u}})
\bigg] \mathrm{d}m(u)\geq 0.
\end{align*}
\end{lemma}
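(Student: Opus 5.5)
The inequality is the first-order optimality condition $\frac{\mathrm{d}}{\mathrm{d}\varepsilon}J(\boldsymbol{\hat\alpha}+\varepsilon\boldsymbol{\pi})\big|_{\varepsilon=0^+}\ge 0$, and I would prove it by identifying the left-hand side as $\lim_{\varepsilon\downarrow 0}\varepsilon^{-1}\bigl(J(\boldsymbol{\hat\alpha}+\varepsilon\boldsymbol{\pi})-J(\boldsymbol{\hat\alpha})\bigr)$. Since $\boldsymbol{\hat\alpha}+\varepsilon\boldsymbol{\pi}\in\mathcal{A}_{\mathrm{ad}}$ for $\varepsilon\in[0,1]$ and $\boldsymbol{\hat\alpha}$ is optimal, each difference quotient is nonnegative, so any limit is nonnegative. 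Everything therefore reduces to computing this limit.

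\textbf{Step 1: expansion of the cost.} Write the difference quotient as the sum of three pieces, coming from $\ell$, $h$ and $g$. For the running cost, use the first-order Taylor expansion along the segment $\Theta^u_{\lambda,\varepsilon,t}$ already introduced in the proof of Lemma~\ref{lemma:5.5}. In the $(x,y,z,\alpha)$ directions use the classical fundamental theorem of calculus. In the measure direction use the $L_m$-differentiability \eqref{eq:generalised_L-diff}, integrated along $\lambda\in[0,1]$; this is legitimate by continuity of the $L_m$-derivative, because the family $(\mathbb{P}_{\theta^{\tilde u}_{\lambda,\varepsilon,t}})_{\tilde u}$ is lifted by $\tilde{\theta}^{\tilde u}_{\lambda,\varepsilon,t}$ on the copy space. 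This gives
\[
\tfrac1\varepsilon\bigl(\ell^u(t,\Theta^{\varepsilon,u}_t)-\ell^u(t,\Theta^u_t)\bigr)
=\textstyle\sum_\tau A^{u,\varepsilon,\ell}_{\tau,t}\,(\Delta\tau^u_t+\tau^{\prime,u}_t)+\int\tilde{\mathbb{E}}\bigl[\sum_i\tilde{B}^{u,\tilde u,\varepsilon,\ell}_{i,t}(\Delta\tilde\tau_i^{\tilde u}+\tilde\tau_i^{\prime,\tilde u})\bigr]\mathrm{d}m(\tilde u),
\]
with $\Delta\alpha=0$ and $\alpha'=\pi$. The terminal cost $h$ is expanded in the same way, as with $D^{u,\varepsilon,G}_T$ and $\tilde{F}^{u,\tilde u,\varepsilon,G}_T$. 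The initial cost $g$ is expanded in $y$ alone, using the bounded derivative $g_y$.

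\textbf{Step 2: passage to the limit.} Show that the right-hand side converges in $L^1(\mathrm{d}m\otimes\mathrm{d}\mathbb{P}\otimes\mathrm{d}t)$ to the integrand in the statement. Split each term into two parts:
\begin{itemize}
\item $A^{\varepsilon}(\Delta\cdot)$, which vanishes by Cauchy--Schwarz together with Lemma~\ref{lemma:5.5};
\item $(A^{\varepsilon}-\ell_\tau(\Theta))\,\tau'$, which vanishes by dominated convergence, using Lemma~\ref{Lm: continuity} (convergence in measure along a subsequence) and the continuity of the derivatives.
\end{itemize}
The quadratic growth bounds in Assumption~\ref{Ass:5.2}(4) suggest that $\ell_x,\ell_y,\ell_z,\ell_\alpha$ and the $L_m$-derivatives have at most linear growth. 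Then $|A^\varepsilon|^2$ is dominated by $C(1+|\theta_{\lambda,\varepsilon}|^2+\text{moments})$, which is uniformly integrable because $\theta_{\lambda,\varepsilon}\to\vartheta$ in $L^2_m$.

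For $g^u_y(Y^u_0)Y^{\prime,u}_0$ I also need $\mathbb{E}|\Delta Y^u_0|^2\to0$ in $L^1(m)$, not only the time-integrated convergence. I would obtain this, as well as $\sup_t$ control of $\Delta X$ (needed for $X'_T$), from standard BSDE and SDE estimates for the linear system satisfied by $(\Delta X,\Delta Y,\Delta Z)$, using \eqref{converge to 0}.

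\textbf{Main obstacle.} The main difficulty is exactly these pointwise-in-time estimates at $t=0$ and $t=T$, together with justifying domination for the $L_m$-derivative terms. Lemma~\ref{lemma:5.5} only gives $\|\cdot\|_\lambda$ convergence, so I would first upgrade it to $\int\mathbb{E}[\sup_t|\Delta X^u_t|^2+\sup_t|\Delta Y^u_t|^2]\mathrm{d}m\to0$. To do this, apply Itô's formula and the Burkholder--Davis--Gundy inequality to the $\Delta$-system, whose coefficients $A^\varepsilon$ and $\tilde B^\varepsilon$ are bounded as noted in the proof of Lemma~\ref{lemma:5.5}. If the growth of the derivatives of $\ell$ and $h$ turns out not to follow from Assumption~\ref{Ass:5.2}(4), I would add the implicit linear-growth assumption on these derivatives, which is standard in \cite{CHEN2023105550}.
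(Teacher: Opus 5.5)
Your proposal is correct and takes the same route as the paper. Optimality gives $\varepsilon^{-1}\bigl(J(\boldsymbol{\hat\alpha}+\varepsilon\boldsymbol{\pi})-J(\boldsymbol{\hat\alpha})\bigr)\ge 0$, and one passes to the limit term by term for $\ell$, $h$ and $g$ using Lemma~\ref{lemma:5.5} and dominated convergence.

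The two points you flag as the main obstacle are real, and the paper's brief argument leaves them implicit. First, Lemma~\ref{lemma:5.5} as stated does not control $\Delta X^u_T$ and $\Delta Y^u_0$. However, the weighted Pardoux--Tang-type estimates behind it already contain the endpoint terms $\mathbb{E}|\Delta X^u_T|^2$ and $\mathbb{E}|\Delta Y^u_0|^2$, so your sup-norm upgrade is more than is needed. Second, the quadratic growth of $\ell$ and $h$ in Assumption~\ref{Ass:5.2}(4) does not imply linear growth of their derivatives (e.g.\ $\sin(x^3)$). So some additional growth or integrability condition on the derivatives, such as the one you propose, is genuinely needed for the domination step.
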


\begin{proof}
As \( \boldsymbol{\hat{\alpha}} \) is an optimal control, we have
\[
\frac{1}{\varepsilon}  [ J(\boldsymbol{\hat{\alpha}} + \varepsilon \pi) - J(\boldsymbol{\hat{\alpha}})  ] \geq 0.
\]
Applying Lemma \ref{lemma:5.5}, Assumption \ref{Ass:5.2}, and dominated convergence, we obtain that
\begin{align*}
\frac{1}{\varepsilon} \mathbb{E}  \Big[ h^u\bigl(X^{\varepsilon,u}_T, (\mathbb{P}_{X^{\varepsilon,\bar{u}}_T})_{\bar{u}}\bigr) - h^u\bigl(X^u_T, (\mathbb{P}^{\bar{u}}_{T,1})_{\bar{u}}\bigr)  
\Big] 
& \to \mathbb{E} \bigg[ h^u_x\bigl(X^u_T, (\mathbb{P}^{\bar{u}}_{T,1})_{\bar{u}}\bigr) X^{\prime,u}_T 
+ \\& \int \tilde{\mathbb{E}} \big[ \partial_{P^{(1)}}h^u\bigl(X^u_T, (\mathbb{P}^{\bar{u}}_{T,1})_{\bar{u}}\bigr)\bigl(\tilde{X}^{\tilde{u}}_T\bigr) \tilde{X}^{\prime,{\tilde{u}}}_T ]\mathrm{d}m(\tilde{u})\bigg],
\end{align*}
as \( \varepsilon \to 0 \). By analogous reasoning for the terms $l^u$ and $g^u$ and integrating over the space $U$ on both sides, we conclude the proof.
\end{proof}

Now, let 
\[
\widetilde{\Theta}^u_t := \bigl(\widetilde{X}^u_t, \widetilde{Y}^u_t, \widetilde{Z}^u_t, \tilde{\hat{\alpha}}^u_t, (\mathbb{P}^{\tilde{u}}_{t})_{\bar{u}}\bigr).
\]
We introduce the following adjoint system for our control problem labelled by $u \in U$. 
\begin{equation}\label{adjoint} 
\begin{cases}
d{p}^u_t &= \Big\{ 
    -b_y^{\top,u}(t, \Theta^u_t) q^u_t 
    -\sigma_y^{\top,u}(t, \Theta^u_t) k^u_t 
    +f_y^{\top,u}(t, \Theta^u_t) p^u_t 
    -\ell^u_y(t, \Theta^u_t) + \\
&\quad \int \tilde{\mathbb{E}}  [- \partial_{P^{(2)}}b^{\top,\tilde{u}}(t, \widetilde{\Theta}^{\tilde{u}}_t)\bigl(\vartheta^u_t\bigr) \tilde{q}^{\tilde{u}}_t  
    - \partial_{P^{(2)}}\sigma^{\top,\tilde{u}}(t, \widetilde{\Theta}^{\tilde{u}}_t)\bigl(\vartheta^u_t\bigr) \tilde{k}^{\tilde{u}}_t   \\
&\quad + \partial_{P^{(2)}}f^{\top,\tilde{u}}(t, \widetilde{\Theta}^{\tilde{u}}_t)\bigl(\vartheta^u_t\bigr) \tilde{p}^{\tilde{u}}_t  
    - \partial_{P^{(2)}}\ell^{\tilde{u}}(t, \widetilde{\Theta}^{\tilde{u}}_t)\bigl(\vartheta^u_t\bigr) ] \mathrm{d}m(\tilde{u})
\Big\} \mathrm{d}t \\
&\quad + \Big\{ 
    -b_z^{\top,u}(t, \Theta^u_t) q^u_t 
    -\sigma_z^{\top,u}(t, \Theta^u_t) k^u_t 
    +f_z^{\top,u}(t, \Theta^u_t) p^u_t 
    -\ell^u_z(t, \Theta^u_t) + \\
&\quad \int \tilde{\mathbb{E}}  [ -\partial_{P^{(3)}}b^{\top,\tilde{u}}(t, \widetilde{\Theta}^{\tilde{u}}_t)\bigl(\vartheta^u_t\bigr)\tilde{q}^{\tilde{u}}_t  
    -\partial_{P^{(3)}}\sigma^{\top,\tilde{u}}(t, \widetilde{\Theta}^{\tilde{u}}_t)\bigl(\vartheta^u_t\bigr) \tilde{k}^{\tilde{u}}_t    \\
&\quad + \partial_{P^{(3)}}f^{\top,\tilde{u}}(t, \widetilde{\Theta}^{\tilde{u}}_t)\bigl(\vartheta^u_t\bigr) \tilde{p}^{\tilde{u}}_t  
    -\partial_{P^{(3)}} \ell^{\tilde{u}}(t, \widetilde{\Theta}^{\tilde{u}}_t)\bigl(\vartheta^u_t\bigr) ] \mathrm{d}m(\tilde{u})
\Big\} \mathrm{d}B^u_t,  \\
dq^u_t &= \Big\{ 
    -b_x^{\top,u}(t, \Theta^u_t) q^u_t 
    -\sigma_x^{\top,u}(t, \Theta^u_t) k^u_t 
    +f_x^{\top,u}(t, \Theta^u_t) p^u_t 
    -\ell^u_x(t, \Theta^u_t)+ \\
&\quad \int \tilde{\mathbb{E}}  [- \partial_{P^{(1)}}b^{\top,\tilde{u}}(t, \widetilde{\Theta}^{\tilde{u}}_t)\bigl(\vartheta^u_t\bigr)\tilde{q}^{\tilde{u}}_t  
    -\partial_{P^{(1)}}\sigma^{\top,\tilde{u}}(t, \widetilde{\Theta}^{\tilde{u}}_t)\bigl(\vartheta^u_t\bigr) \tilde{k}^{\tilde{u}}_t   \\
&\quad + \partial_{P^{(1)}}f^{\top,\tilde{u}}(t, \widetilde{\Theta}^{\tilde{u}}_t)\bigl(\vartheta^u_t\bigr) \tilde{p}^{\tilde{u}}_t  
    -\partial_{P^{(1)}}\ell^{\tilde{u}}(t, \widetilde{\Theta}^{\tilde{u}}_t)\bigl(\vartheta^u_t\bigr) ] \mathrm{d}m(\tilde{u})
\Big\} \mathrm{d}t + k^u_t \mathrm{d}B^u_t, 
\end{cases}
\end{equation}
with initial conditions $p^u_0 = -g^u_y(Y^u_0)$ and terminal conditions
\begin{align*}
    q^u_T &= 
    -G_{x}^{\top,u}\bigl(X^u_T, (\mathbb{P}^{\bar{u}}_{T,1})_{\bar{u}}\bigr) p^u_T 
    - \int \tilde{\mathbb{E}}  \bigl[ \partial_{P^{(1)}}G^{\top,\tilde{u}}\bigl(\tilde{X}^{\tilde{u}}_T, (\mathbb{P}^{\bar{u}}_{T,1})_{\bar{u}}\bigr)\bigl(X^u_T\bigr) \tilde{p}^{\tilde{u}}_T  \bigr]\mathrm{d}m(\tilde{u}) \\
&\quad +  h_{x}^{u}\bigl(X^u_T, (\mathbb{P}^{\bar{u}}_{T,1})_{\bar{u}}\bigr) 
    + \int \tilde{\mathbb{E}}  [ \partial_{P^{(1)}}h^{\top,\tilde{u}}\bigl(\tilde{X}^{\tilde{u}}_T, (\mathbb{P}^{\bar{u}}_{T,1})_{\bar{u}}\bigr)\bigl(X^u_T\bigr)   ]\mathrm{d}m(\tilde{u}),
\end{align*}
where $\widetilde{\Theta}^u_t,\tilde{p}^{\tilde{u}},\tilde{q}^{\tilde{u}},\tilde{k}^{\tilde{u}}$ is an independent copy of $\Theta^u_t,p^{\tilde{u}},q^{\tilde{u}},k^{\tilde{u}}$.

To prove the well-posedness of the adjoint equation \eqref{adjoint}, we make the following assumption.

\begin{assumption}\label{Ass:5.3}
We assume there exists positive constants $\bar{\rho}_5$ and $\bar{\mu}_{\gamma,i}$, $\gamma = f,b, \sigma$ and $i = 1,2,3$,  such that for every $u \in U$,
\begin{align*}
    [\int_U \tilde{\mathbb{E}}  [|\partial_{P^{(1)}}G^{\top,\tilde{u}}\bigl(\tilde{X}^{\tilde{u}}_T,(\mathbb{P}_{X^u_T})_u\bigr)\bigl(X^u_T\bigr)|^2]\mathrm{d}m(\tilde{u})]^{\frac{1}{2}} <\bar{\rho}_{5},
\end{align*}
and for every $u \in U$, $t \in [0,T]$ 
\begin{align*}
 [\int_U \tilde{\mathbb{E}}  [|\partial_{P^{(i)}}\gamma^{\top,\tilde{u}}(t,\widetilde{\Theta}_t^{\tilde{u}})(\vartheta^u_t)|^2]\mathrm{d}m(\tilde{u})]^{\frac{1}{2}} <\bar{\mu}_{\gamma,i}.
\end{align*}
We further require 
\begin{align*}
    \int\int_{0}^{T}\mathbb{E}\bigg[ &
| \ell_x^u(t, \Theta^u_t)|^{2} 
+ | \ell_y^u(t, \Theta^u_t)|^{2} +| \ell_z^u(t, \Theta^u_t)|^{2} +| \ell_{\alpha}^u(t, \Theta^u_t)|^{2} + \\&\sum_{i=1}^4 \int\tilde{\mathbb{E}}[|\partial_{P^{(i)}}\ell^{\tilde{u}}(t, \widetilde{\Theta}^{\tilde{u}}_t)\bigl(\vartheta^u_t\bigr)|^2] \mathrm{d}m(\tilde{u})\bigg] 
\mathrm{d}t\mathrm{d}m(u) < \infty,
\end{align*}
and 
\begin{align*}
    \int\mathbb{E}\bigg[ 
\big|h^u_x\bigl(X^u_T, (\mathbb{P}^{\bar{u}}_{T,1})_{\bar{u}}\bigr)\big|^{2} 
+ \big| g^u_y(Y^u_0)\big|^{2}+   \int \tilde{\mathbb{E}}  [ |\partial_{P^{(1)}}h^{\tilde{u}}\bigl(\tilde{X}^{\tilde{u}}_T, (\mathbb{P}^{\bar{u}}_{T,1})_{\bar{u}}\bigr)\bigl(X^u_T\bigr)   |^2]\mathrm{d}m(\tilde{u})\bigg] 
\mathrm{d}m(u) < \infty.
\end{align*}
Finally, we assume 
    $$
    2\lambda_2+2\lambda_1 < - 2\bar{\mu}_{f,2}  - 6\mu_2^2 - 6\bar{\mu}_{f,3}^2  - 2\bar{\mu}_{b,1}-  (\bar{\mu}_{\sigma,1}+w_1)^2. 
    $$
\end{assumption}

We denote the coefficients and the terminal conditions of the adjoint equation \eqref{adjoint} by 
$\hat{b}^u, \hat{\sigma}^u, \hat{f}^u, \hat{G}^u$, labelled by $u \in U$. To emphasize the structural condition \eqref{adjoint} satisfies, we will omit its explicit dependence on $(X^u_t,Y^u_t,Z^u_t,\hat{\alpha}^u_t)_u$, which together can be treated as $(\Lambda^u)_u$ as in \eqref{eq:3.2}. Suppose Assumption \ref{Ass:5.2} and \ref{Ass:5.3} holds, we now use the notation \( p, p_1, p_2 \in \mathbb{R}^l \), \( q, q_1, q_2 \in \mathbb{R}^n \), \( k, k_1, k_2 \in \mathbb{R}^{n \times d} \),  \( \zeta_1, \zeta_2 \in \mathcal{P}^2_m(\mathbb{R}^l) \),  and \( \xi, \eta_1, \eta_2 \in \mathcal{P}^2_m(\mathbb{R}^l \times \mathbb{R}^n \times \mathbb{R}^{n \times d}) \). Denote $\Delta \varphi \triangleq \varphi_1 -\varphi_2$, for $\varphi = p,q,k$. For all choices of the aforementioned variables, we can show
    \begin{align*}
\langle \hat{b}^u(t, p_1, q, k,\xi) - \hat{b}^u(t, p_2, q, k, \xi), \Delta p \rangle &\leq \lambda_2 |\Delta p|^2, \\
\langle \hat{f}^u(t, p, q_1, k, \xi) - \hat{f}^u(t, p, q_2, k, \xi), \Delta q \rangle &\leq \lambda_1 |\Delta q|^2. 
\end{align*}

Applying the second half of the result in Lemma \ref{lemma:lipschitz}, we also have
\begin{align*}
|\hat{b}^u(t, p, q_1, k_1, \eta_1) - \hat{b}^u(t, p, q_2, k_2, \eta_2)| &\leq \rho_1 |\Delta q| + w_2 |\Delta k| + \bar{\mu}_{f,2} W_{2,m}(\eta_1^{(1)}, \eta_2^{(1)}) \\
&\quad + \bar{\mu}_{b,2} W_{2,m}(\eta_1^{(2)}, \eta_2^{(2)}) + \bar{\mu}_{\sigma,2} W_{2,m}(\eta_1^{(3)}, \eta_2^{(3)}), \\
|\hat{f}^u(t, p_1, q, k_1, \eta_1) - \hat{f}^u(t, p_2, q, k_2, \eta_2)| &\leq \mu_1 |\Delta p| + w_1 |\Delta k| + \bar{\mu}_{f,1} W_{2,m}(\eta_1^{(1)}, \eta_2^{(1)}) \\
&\quad + \bar{\mu}_{b,1} W_{2,m}(\eta_1^{(2)}, \eta_2^{(2)}) + \bar{\mu}_{\sigma,1} W_{2,m}(\eta_1^{(3)}, \eta_2^{(3)}), \\
|\hat{\sigma}^u(t, p_1, q_1, k_1, \eta_1) - \hat{\sigma}^u(t, p_2, q_2, k_2, \eta_2)|^2 &\leq 6\mu_2^2 |\Delta p|^2 + 6\rho_2^2 |\Delta q|^2 + \hat{w}_3^2 |\Delta k|^2 \\
&\quad + 6\bar{\mu}_{f,3}^2 W_{2,m}^2(\eta_1^{(1)}, \eta_2^{(1)}) + 6\bar{\mu}_{b,3}^2 W_{2,m}^2(\eta_1^{(2)}, \eta_2^{(2)}) + \\& \qquad 6\bar{\mu}_{\sigma,3}^2W_{2,m}^2(\eta_1^{(3)}, \eta_2^{(3)}), \\
|\hat{G}^u(p_1, \zeta_1) - \hat{G}^u(p_2, \zeta_2)|^2 &\leq \hat{\rho}_4^2 |\Delta p|^2 + 2\bar{\rho}_5^2 W_{2,m}^2(\zeta_1, \zeta_2).
\end{align*}

Set
\begin{align*}
\bar{\lambda}_{21} &= \bar{\lambda} - 2\lambda_2 - \bar{C}_1^{-1} \rho_1 - \bar{C}_2^{-1} w_2 - 2\bar{\mu}_{f,2} - \bar{C}_3^{-1} \bar{\mu}_{b,2} - \bar{C}_4^{-1} \bar{\mu}_{\sigma,2}  - 6\mu_2^2 - 6\bar{\mu}_{f,3}^2, \\
\bar{\lambda}_{22} &= -\bar{\lambda} - 2\lambda_1 - \bar{K}_1^{-1} \mu_1 - \bar{K}_2^{-1} w_1 - 2\bar{\mu}_{b,1} - \bar{K}_3^{-1} \bar{\mu}_{f,1}- \bar{K}_2^{-1} \bar{\mu}_{\sigma,1}.
\end{align*}

By Theorem \ref{Theorem 4.4}, we have the following well-posedness result for the adjoint equation \eqref{adjoint}.
\begin{theorem} \label{Theorem for adjoint}
Assume Assumption \ref{Ass:5.2} and Assumption \ref{Ass:5.3} hold. We choose the appropriate $\bar{\lambda}$, $0<\bar{K}_2< \frac{1}{w_1+\bar{\mu}_{\sigma,1}}$, and choose $\bar{C}_1, \bar{C}_2,\bar{C}_3,\bar{C}_4,\bar{K}_1,\bar{K}_3$ sufficiently large such that $\bar{\lambda}_{21}$ and $\bar{\lambda}_{22}$ are positive. Then there exists a positive constant $$ \theta_{2} = \frac{1}{\bigl( \frac{1}{\bar{\lambda}_{22}} + \frac{1}{1 -\bar{K}_2 w_1 -\bar{K}_2\bar{\mu}_{\sigma,1}} \bigr)
\times \bigl[ \hat{\rho}^2_4 + 2\bar{\rho}_5^2 + \frac{\bar{K}_1 \mu_1 + \bar{K}_3 \bar{\mu}_{f,1}}{\bar{\lambda}_{21}} \bigr]},$$ which does not depend on T. Provided this holds, if 
\[
\bar{C}_1\rho_1+6\rho_2^2+ \bar{C}_3\bar{\mu}_{b,2}+ 6\bar{\mu}_{b,3}^2  \vee
\bar{C}_2w_2+ \hat{w}_3^2+ \bar{C}_4\bar{\mu}_{\sigma,2}+6\bar{\mu}_{\sigma,3}^2 \in [0, \theta_{2}),
\]  
then the adjoint equation \eqref{adjoint} admits a unique strong solution where $(X^u_t,Y^u_t,Z^u_t,\hat{\alpha}^u_t)_u$ together being treated as $(\Lambda^u)_u$ as in \eqref{eq:3.2}. 
\end{theorem}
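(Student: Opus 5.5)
The plan is to recast the adjoint system \eqref{adjoint} as an instance of the FBSDE system \eqref{eq:3.2}, with the roles of forward and backward components interchanged, and then invoke Theorem \ref{Theorem 4.4}.

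\textbf{Recasting.} We take $p^u$ as the forward component: it is prescribed at time $0$ by $p_0^u=-g^u_y(Y^u_0)$. We take $(q^u,k^u)$ as the backward pair, with terminal value $q^u_T=\hat G^u(p^u_T,(\mathrm{Law}(p^{\tilde u}_T))_{\tilde u})$. The frozen optimal quantities $(X^u,Y^u,Z^u,\hat\alpha^u)$ are $\mathbb{F}^u$-progressive and, being a strong solution, are functionals of $(\chi_0^u,B^u)$. They therefore play the role of $\Lambda^u$, and the mean-field argument $(\mathbb{P}^{\tilde u}_t)_{\tilde u}$ then refers to the joint law of $(p,q,k,\vartheta)$.

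One point needs care: the initial datum $-g^u_y(Y^u_0)$ must be $\mathcal{F}^u_0$-measurable. This holds because $Y^u_0$ is $\mathcal{F}^u_0$-measurable. It is square integrable in the $m$-integrated sense by the integrability assumption on $g^u_y$ in Assumption \ref{Ass:5.3}.

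\textbf{Checking Assumption \ref{Assumption 4.1} and Assumption \ref{ass:alternative}.} Measurability and continuity follow from the $C^1$ hypotheses in Assumption \ref{Ass:5.2}. The affine growth conditions hold because the coefficients are affine in $(p,q,k)$. The integrability of the coefficients at zero, i.e.\ condition \ref{ass:2.1.6}, follows from the square-integrability requirements on $\ell$, $h$ and their derivatives in Assumption \ref{Ass:5.3}.

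The monotonicity conditions are stated just before the theorem. The drift of $p$ has $p$-derivative $f_y^{\top}$, so the forward monotonicity constant is $\lambda_2$. The driver of $q$ has $q$-derivative $-b_x^{\top}$; since the sign convention for the backward driver is $-\mathrm{d}q=\hat f\,\mathrm{d}t$, this gives constant $\lambda_1$. Both come from Remark \ref{Remark:5.1}.

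\textbf{Lipschitz bounds.} The coefficient bounds on $b_y,b_z,\sigma_y,\sigma_z,f_x,f_z$ come from the Lipschitz constants in Assumption \ref{Ass:5.2}. The mean-field terms need the following estimate. For a kernel term of the form $\int\tilde{\mathbb{E}}[\partial_{P^{(i)}}\gamma^{\top,\tilde u}(t,\widetilde\Theta^{\tilde u}_t)(\vartheta^u_t)\,\tilde q^{\tilde u}_t]\,\mathrm{d}m(\tilde u)$, take the difference for two candidate kernels and apply Cauchy--Schwarz in $\tilde{\mathbb{E}}\otimes m$. This bounds the difference by $\bar\mu_{\gamma,i}$ times the $L^2_m$ distance of the lifts. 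Choosing optimal couplings for each $\tilde u$ then produces a $W_{2,m}$ distance; this is the second half of Lemma \ref{lemma:lipschitz}.

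The squared diffusion coefficient contains six summands. By Cauchy--Schwarz, $(\sum_{i=1}^6 a_i)^2\le 6\sum_{i=1}^6 a_i^2$, which accounts for the factor $6$. The terminal map similarly gives $\hat\rho_4^2$ and $2\bar\rho_5^2$.

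\textbf{Applying Theorem \ref{Theorem 4.4}.} With these constants substituted, the hypotheses of Theorem \ref{Theorem 4.4} take exactly the stated form, and the roles are as follows.
\begin{itemize}
\item $(\rho_1,\rho_2)$ become $(\rho_1,w_2)$.
\item $(\rho_{31},\rho_{32},\rho_{33})$ become $(\bar\mu_{f,2},\bar\mu_{b,2},\bar\mu_{\sigma,2})$.
\item $(\mu_1,\mu_2)$ become $(\mu_1,w_1)$.
\item $(\mu_{31},\mu_{32},\mu_{33})$ become $(\bar\mu_{f,1},\bar\mu_{b,1},\bar\mu_{\sigma,1})$.
\item $(w_1,w_2,w_3)$ become $(\sqrt6\mu_2,\sqrt6\rho_2,\hat w_3)$.
\item $(w_{41},w_{42},w_{43})$ become $(\sqrt6\bar\mu_{f,3},\sqrt6\bar\mu_{b,3},\sqrt6\bar\mu_{\sigma,3})$.
\end{itemize}
Under this correspondence the compatibility inequality in Assumption \ref{Ass:5.3} is precisely the inequality $2(\lambda_1+\lambda_2)<\cdots$ required there. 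Hence $\bar\lambda$, $\bar K_2$ and the large constants can be chosen as stated, $\bar\lambda_{21},\bar\lambda_{22}>0$, and $\theta_2$ is well defined and independent of $T$. The smallness hypothesis of the theorem then yields existence and uniqueness of a strong solution in $\mathcal{S}^2_m\times\mathcal{H}^2_m$.

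\textbf{Main obstacle.} The delicate step is the treatment of the "transposed" mean-field terms. Their derivative is evaluated at $\widetilde\Theta^{\tilde u}$ and at the point $\vartheta^u_t$ of the index $u$, so these are not of the standard form $\gamma^u(\cdot,(\mathbb{P}^{\tilde u})_{\tilde u})$. I would handle this by viewing each such term as a function of $\vartheta^u_t$, which is absorbed into $\Lambda^u$, and of the joint kernel $(\mathrm{Law}(\tilde\vartheta^{\tilde u},\tilde p^{\tilde u},\tilde q^{\tilde u},\tilde k^{\tilde u}))_{\tilde u}$.

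This creates a secondary difficulty, because $W_{2,m}$ distances of these joint kernels need to be controlled by those of the $(p,q,k)$ marginals alone. This works because the fourth marginal, the law of $\vartheta$, is common to both kernels, matching the convention $\eta_1^{(4)}=\eta_2^{(4)}$ in Assumption \ref{Assumption 4.1}. In the fixed-point/continuation argument I would therefore compare only processes sharing the same $\vartheta$ component, so that the coupling in the Lipschitz estimate can be taken diagonal in that component. The measurability of $u\mapsto\mathrm{Law}(p^u,q^u,k^u,\vartheta^u)$ is supplied by the strong-solution framework of Section \ref{sect:well-posedness}.
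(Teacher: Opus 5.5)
Your route is the paper's. You recast \eqref{adjoint} with $p$ forward, $(q,k)$ backward and $\vartheta=(X,Y,Z,\hat\alpha)$ as $\Lambda$, check monotonicity with $\lambda_1,\lambda_2$ interchanged, collect the Lipschitz constants (with the factors $6$ and $2$), and invoke Theorem~\ref{Theorem 4.4}. Your dictionary is right: it reproduces $\bar\lambda_{21}$, $\bar\lambda_{22}$, $\theta_2$, the smallness condition and the compatibility inequality of Assumption~\ref{Ass:5.3} exactly.

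\textbf{The transposed mean-field terms.} The step that fails as written is the Lipschitz estimate for these terms; the paper makes the same leap in the second half of Lemma~\ref{lemma:lipschitz}. After Cauchy--Schwarz you have $\bar\mu_{\gamma,i}\bigl(\int\tilde{\mathbb{E}}[|\tilde q^{1,\tilde u}_t-\tilde q^{2,\tilde u}_t|^2]\,\mathrm{d}m(\tilde u)\bigr)^{1/2}$. Here the triple $(\tilde q^{1,\tilde u},\tilde q^{2,\tilde u},\tilde\vartheta^{\tilde u})$ must reproduce both joint laws $\mathrm{Law}(q^{j,\tilde u}_t,\vartheta^{\tilde u}_t)$. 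You cannot also impose an optimal coupling of the marginals of $q^{1,\tilde u}_t$ and $q^{2,\tilde u}_t$, since three prescribed pairwise laws need not be compatible.

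A common law of $\vartheta$, as in your last paragraph, does not rescue this. Take two kernels with the same marginals, where under one the $q$-component equals $\phi(\vartheta)$ and under the other it is independent of $\vartheta$. Every marginal $W_{2,m}$-distance vanishes. Yet the term integrates $\partial_{P^{(i)}}\gamma^{\top,\tilde u}(t,\theta,(\mathbb{P}^{\bar u}_t)_{\bar u})(\vartheta^u_t)\,q$ against the joint law of $(q,\theta)$, so it generally changes. The same happens in $\hat G^u$, where $p_T$ is paired with $X_T$. So the adjoint coefficients do not satisfy Assumption~\ref{ass:alternative} as stated, and Theorem~\ref{Theorem 4.4} cannot be used as a black box.

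\textbf{The repair.} It is the one in your last paragraph, but it has to carry the proof rather than sit beside it. What does hold is an $L^2$ bound for candidates realised on the same space with the same $\vartheta$, writing $\Delta p=p^1-p^2$ and so on. For instance, the mean-field part of $\hat b^u$ changes by at most
\[
\bar\mu_{f,2}\Bigl(\int\mathbb{E}\bigl[|\Delta p^{\tilde u}_t|^2\bigr]\mathrm{d}m(\tilde u)\Bigr)^{1/2}+\bar\mu_{b,2}\Bigl(\int\mathbb{E}\bigl[|\Delta q^{\tilde u}_t|^2\bigr]\mathrm{d}m(\tilde u)\Bigr)^{1/2}+\bar\mu_{\sigma,2}\Bigl(\int\mathbb{E}\bigl[|\Delta k^{\tilde u}_t|^2\bigr]\mathrm{d}m(\tilde u)\Bigr)^{1/2},
\]
and similarly for $\hat f^u$, $\hat\sigma^u$ and $\hat G^u$. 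The contraction estimates behind Theorem~\ref{Theorem 4.4} only compare such pairs: in Lemma~\ref{Lm: FBSDE paper} both candidates share $\hat\Lambda$, and $\hat W_{2,m}$ is in any case bounded by $L^2$-differences. So you should run the proof of Theorem~\ref{Theorem 4.4} under this hypothesis instead of citing its statement. The constants are unchanged.

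\textbf{Growth condition.} Being affine in $(p,q,k)$ does not give the growth condition \ref{ASS: bound} of Assumption~\ref{Assumption 4.1} for the forward drift $\hat b^u$. Its $p$-slope is the random matrix $f_y^{\top}(t,\Theta^u_t)$, and Assumption~\ref{Ass:5.2} bounds only $b_x$ and $g_y$. A uniform bound on $f_y$ therefore has to be added; the paper does not address this either.
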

\begin{assumption} \label{Assumption for total}
    Let $\theta'_1 = \theta_{1} \wedge \theta_{2}$, we assume \[\bar{C}_1\rho_1+6\rho_2^2+ \bar{C}_3\bar{\mu}_{b,2}+ 6\bar{\mu}_{b,3}^2, 
\bar{C}_2w_2+ \hat{w}_3^2+ \bar{C}_4\bar{\mu}_{\sigma,2}+6\bar{\mu}_{\sigma,3}^2,
C_1\rho_1+\hat{w}_2^2+ C_3\rho_{3}+\hat{w}_{4}^2,
C_2\rho_2+ \hat{w}_3^2+ C_4\rho_{3}+\hat{w}_{4}^2
\] are all smaller than $\theta'_1$.
\end{assumption}
By combining Theorem \ref{Theorem for FBSDE}, Theorem \ref{Theorem 5.4}, and Theorem \ref{Theorem for adjoint}, we have the following theorem.
\begin{theorem}\label{thm:FBSDE_variational_adjoint}
    If Assumptions \ref{Ass:5.2}--\ref{Assumption for total} hold, then the FBSDE \eqref{eq:4.1}, variational equation \eqref{eq::variation}, and the adjoint equation \eqref{adjoint} all admit a unique strong solution.
\end{theorem}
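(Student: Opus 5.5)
The statement is a direct combination of the three well-posedness results already established, so the plan is to check that Assumption \ref{Assumption for total} supplies the smallness hypotheses each of them needs. The order is forced by the dependence structure. First the controlled system \eqref{eq:4.1} must be solved for the given admissible control. Then the variational system \eqref{eq::variation} is solved, with coefficients evaluated along that solution. Finally the adjoint system \eqref{adjoint} is solved, with coefficients evaluated along the state solution, whose law appears in the frozen arguments.

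Step one: fix an admissible control $\boldsymbol{\hat\alpha}$ (in particular an optimal one, if one exists). Assumption \ref{Assumption for total} gives
\[
C_1\rho_1+\hat{w}_2^2+ C_3\rho_{3}+\hat{w}_{4}^2 \;\vee\; C_2\rho_2+ \hat{w}_3^2+ C_4\rho_{3}+\hat{w}_{4}^2<\theta_1'\le\theta_1 .
\]
This is precisely the last condition of Assumption \ref{Ass:5.2}. Since $\hat w_i\ge w_i$ and $\hat\rho_j\ge\rho_j$, the constant $\theta$ of Theorem \ref{Theorem 4.2} is at least $\theta_1$, and the corresponding $\bar\lambda_1$ dominates $\bar\lambda_{11}$. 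Hence Theorem \ref{Theorem for FBSDE} applies and yields a unique strong solution $(X,Y,Z)$ of \eqref{eq:4.1}. The control is recast as $(\Lambda^u)_u$ via its representation $\alpha^u(t,\chi_0^u,B^u_{\cdot\wedge t})$.

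Step two: with $\Theta^u_t$ now fixed and its law kernel $(\mathbb{P}^u_t)_u$ well defined by the joint measurability lemma, we invoke Theorem \ref{Theorem 5.4}. Its hypotheses are exactly Assumption \ref{Ass:5.2}, with the $\hat{w}$, $\hat{\rho}$ constants already accounted for in $\theta_1$. This gives a unique strong solution of \eqref{eq::variation}.

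Step three: for the adjoint system, Assumption \ref{Ass:5.3} supplies the monotonicity and Lipschitz bounds displayed before Theorem \ref{Theorem for adjoint}. Assumption \ref{Assumption for total} makes the two quantities in that theorem smaller than $\theta_1'\le\theta_2$. Theorem \ref{Theorem for adjoint} then gives a unique strong solution of \eqref{adjoint}, with $(X,Y,Z,\hat\alpha)$ treated as the exogenous $(\Lambda^u)_u$.

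The only delicate point, and the expected main obstacle, is the requirement in Theorem \ref{Theorem 4.2} that the exogenous input $(\Lambda^u)_u$ be a Borel function $\Lambda^u(t,\chi_0^u,B^u_{\cdot\wedge t})$. For step one this holds by the definition of $\mathcal{A}_{\text{ad}}$. For steps two and three the input is the state solution itself, which is only known to be $\mathbb{F}^u$-progressive with a Markov-kernel law. I would handle this by appealing to the Yamada--Watanabe-type representation from Section \ref{sect:well-posedness}: the strong solution is a measurable functional of $(u,\chi_0^u,B^u)$, jointly measurable in $u$. Progressively measurable versions then exist, and are compatible with the lift used in Definition \ref{def:strong_soln}. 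I would also check that the square-integrability conditions on $\Lambda$ follow from $(X,Y,Z)\in\mathcal{S}^2_m\times\mathcal{H}^2_m$ and $\boldsymbol{\hat\alpha}\in\mathcal{M}^2_m$. The independent copies appearing in the coefficients are handled exactly as in the $L_m$-lift, since they only enter through $(\mathbb{P}^{u}_t)_u$.
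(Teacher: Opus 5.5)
Your proposal is correct and matches the paper. The paper's proof is exactly the combination of Theorems \ref{Theorem for FBSDE}, \ref{Theorem 5.4} and \ref{Theorem for adjoint}, with Assumption \ref{Assumption for total} supplying the smallness conditions; your checks $\theta\ge\theta_1$ and $\theta_1'\le\theta_2$ are the right ones. The measurability point you raise, about treating $(X,Y,Z,\hat\alpha)$ as an exogenous Borel input $\Lambda^u(t,\chi_0^u,B^u_{\cdot\wedge t})$, is left implicit in the paper. Section \ref{sect:well-posedness} does not actually state a functional representation that is jointly measurable in $u$. So your fix needs one extra routine step: disintegrate $\bar{\mathbb{P}}^\prime$ with respect to $(u,\chi,w)$, and use pathwise uniqueness to show that the conditional law of the solution is a Dirac mass.
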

To derive the stochastic maximum principle, we define the generalised Hamiltonian 
\begin{align*}
H^u(t, X, Y, Z, \alpha^u, \xi, p, q, k) &
:= \langle q^u_t, b^u(t, X^u, Y^u, Z^u, \alpha^u, \xi) \rangle  
+ \langle k^u_t, \sigma^u(t, X^u, Y^u, Z^u, \alpha^u, \xi) \rangle \\
& - \langle p^u_t, f^u(t, X^u, Y^u, Z^u, \alpha^u, \xi) \rangle 
+ \ell^u(t, X^u, Y^u, Z^u, \alpha^u, \xi) 
\end{align*}
for each $u \in U$. We shall need the following assumption on the Lions derivative.

\begin{assumption} \label{assum: finite for alpha}
     There exists a constant $C_{\alpha}$ such that, for every $u \in U$ and $t \in [0,T]$,
\begin{align*}
 [\int_U \tilde{\mathbb{E}}  [|\partial_{P^{(4)}}\gamma^{\top,\tilde{u}}(t,\widetilde{\Theta}_t^{\tilde{u}})(\vartheta^u_t)|^2]\mathrm{d}m(\tilde{u})]^{\frac{1}{2}} <C_{\alpha},
\end{align*} 
where $\gamma = f,b, \sigma$.
\end{assumption}
\begin{theorem}[Maximum principle]\label{Tm:maximum principle}
Suppose Assumptions \ref{Ass:5.2} -- \ref{assum: finite for alpha} hold. Let \( \boldsymbol{\hat{\alpha}} := (\hat{\alpha}^u)_u \) be an optimal control with corresponding trajectory \( (X, Y, Z) \). For any $\boldsymbol{v} := (v^u)_u \in \mathcal{A}_{ad}$, it holds for $m$-a.e.~$u\in U$ that
\begin{align*}
 &\Bigl\langle H^u_{\alpha}(t, X, Y, Z, \hat{\alpha}^u, \xi, p, q, k) 
\,+  \\& \qquad \qquad \int \tilde{\mathbb{E}}[ \partial_{P^{(4)}}H^{\tilde{u}}(t, \tilde{X}, \tilde{Y}, \tilde{Z}, \tilde{\hat{\alpha}}^u, \xi, \tilde{p}, \tilde{q}, \tilde{k})\bigl(\vartheta^u_t\bigr)] \mathrm{d}m(\tilde{u}), 
\,v^u_t -\hat{\alpha}^u_t \Bigr\rangle  \geq 0,
\end{align*}
 a.s., for a.e.~$t\in[0,T]$, where \( (p, q, k) \) is the strong solution of the adjoint equation \eqref{adjoint}.    
\end{theorem}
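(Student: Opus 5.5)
The plan is the standard duality argument: combine the first-order optimality condition of Lemma \ref{lemma 5.6} with a duality relation between the variational system \eqref{eq::variation} and the adjoint system \eqref{adjoint}, both well-posed by Theorem \ref{thm:FBSDE_variational_adjoint}. Fix $\boldsymbol{v}\in\mathcal{A}_{ad}$ and set $\boldsymbol{\pi}=\boldsymbol{v}-\boldsymbol{\hat{\alpha}}$; by convexity $\boldsymbol{\hat\alpha}+\varepsilon\boldsymbol\pi\in\mathcal{A}_{ad}$ for $\varepsilon\in[0,1]$. Lemma \ref{lemma 5.6} then gives nonnegativity of the linearised cost in the direction $\boldsymbol\pi$.

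\textbf{Duality identity.} For each $u$, I apply It\^o's product rule to
\[
\langle q^u_t,X^{\prime,u}_t\rangle+\langle p^u_t,Y^{\prime,u}_t\rangle
\]
on $[0,T]$, take expectations (the stochastic integrals are true martingales by the $\mathcal{S}^2_m\times\mathcal{H}^2_m$ bounds and Assumption \ref{Ass:5.3}), and integrate in $m(\mathrm{d}u)$. The boundary terms use $X^{\prime,u}_0=0$, $p^u_0=-g^u_y(Y^u_0)$, the terminal condition for $Y^{\prime,u}_T$, and that for $q^u_T$. This yields $\int\mathbb{E}[\langle q^u_T,X^{\prime,u}_T\rangle+\langle p^u_T,Y^{\prime,u}_T\rangle+g^u_y(Y^u_0)Y^{\prime,u}_0]\,\mathrm{d}m(u)$.

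\textbf{Cancellation of the mean-field cross terms.} This is the step I expect to be the main obstacle. A term in the variational equation such as $\langle q^u_t,\int\tilde{\mathbb{E}}[\partial_{P^{(1)}}b^u(t,\Theta^u_t)(\tilde\vartheta^{\tilde u}_t)\tilde X^{\prime,\tilde u}_t]\,\mathrm{d}m(\tilde u)\rangle$ must be matched, after $\int\mathbb{E}\,\mathrm{d}m(u)$, with the term $-\langle\int\tilde{\mathbb{E}}[\partial_{P^{(1)}}b^{\top,\tilde u}(t,\widetilde\Theta^{\tilde u}_t)(\vartheta^u_t)\tilde q^{\tilde u}_t]\,\mathrm{d}m(\tilde u),X^{\prime,u}_t\rangle$ in the adjoint drift. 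The approach is to write both as integrals over $U\times U\times\Omega\times\tilde\Omega$ and swap the roles $(u,\omega)\leftrightarrow(\tilde u,\tilde\omega)$. This is legitimate by Fubini, using the joint measurability lemma together with the Markov kernel property of the joint laws, and by the fact that the copies are independent and identically distributed for each index. Integrability comes from \eqref{eq:diff_L2} and the bounds $\bar\mu_{\gamma,i}$ and $C_\alpha$ of Assumptions \ref{Ass:5.3} and \ref{assum: finite for alpha}. The same swap handles the $\partial_{P^{(i)}}G$ and $\partial_{P^{(i)}}h$ terms at time $T$.

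\textbf{Combining with Lemma \ref{lemma 5.6}.} After the cancellations, all terms involving $X',Y',Z'$ match exactly the corresponding terms of Lemma \ref{lemma 5.6}. What remains is
\[
0\le\int\mathbb{E}\int_0^T\Bigl\langle H^u_\alpha(t,\cdot)+\int\tilde{\mathbb{E}}\bigl[\partial_{P^{(4)}}H^{\tilde u}(t,\cdot)(\vartheta^u_t)\bigr]\,\mathrm{d}m(\tilde u),\;v^u_t-\hat\alpha^u_t\Bigr\rangle\,\mathrm{d}t\,\mathrm{d}m(u),
\]
where the $\partial_{P^{(4)}}$ terms have again been moved from the $\tilde\pi$ side to the $\pi$ side by the symmetrisation of the previous step.

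\textbf{Localisation.} To pass from the integrated inequality to the pointwise statement, I take any Borel set $D\subset U\times[0,T]\times\mathbb{R}^n\times C([0,T],\mathbb{R}^d)$ and use the admissible control $v^u_t\mathbf 1_D+\hat\alpha^u_t\mathbf 1_{D^c}$, evaluated at $(u,t,\chi^u_0,B^u_{\cdot\wedge t})$. This control is admissible since $\mathcal{A}$ is a set of values and $D$ is Borel. Plugging it in shows that the integrand, conditioned as a function of $(u,t,\chi^u_0,B^u_{\cdot\wedge t})$, has a nonnegative integral over every such $D$. Here I need the integrand to be a function of these variables, or its conditional expectation given $\mathcal{F}^u_t$, since the adjoint processes are $\mathbb{F}^u$-progressive. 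A standard argument then gives nonnegativity $\mathrm{d}m\otimes\mathrm{d}t\otimes\mathrm{d}\mathbb{P}$-a.e. Finally, Fubini yields the claim for $m$-a.e.\ $u$, a.s., for a.e.\ $t$.
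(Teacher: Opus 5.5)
Your proposal is correct and follows the same route as the paper. You apply It\^o's formula to $\langle p^u_t,Y^{\prime,u}_t\rangle+\langle q^u_t,X^{\prime,u}_t\rangle$, integrate against $m(\mathrm{d}u)$, and cancel the mean-field cross terms (including the $\partial_{P^{(4)}}$ terms) by a Fubini swap of $(u,\omega)$ and $(\tilde u,\tilde\omega)$, which is what the paper's ``rearranging the terms'' and ``applying Fubini's theorem'' amount to. You then combine this with Lemma \ref{lemma 5.6} to reach the integrated Hamiltonian inequality. The one difference is that you spell out the final localisation, using the admissible controls $v^u_t\mathbf{1}_D+\hat{\alpha}^u_t\mathbf{1}_{D^c}$ and a jointly measurable representation of the integrand in $(u,t,\chi^u_0,B^u_{\cdot\wedge t})$; the paper stops at the integrated inequality and leaves this step implicit.
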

\begin{proof}
We apply Itô’s formula on $\langle p^u_t, Y^{',u}_t\rangle  + \langle q^u_t, X^{',u}_t\rangle$ and integrate against $u$. After rearranging the terms, we can conclude 
\begin{align*}
&\int \mathbb{E}\Big[g_y^u(Y_0^u)Y^{\prime,u}_0+  \\& h_{x}^{u}\bigl(X^u_T, (\mathbb{P}^{\bar{u}}_{T,1})_{\bar{u}}\bigr)X^{\prime,u}_T 
    +  \int \tilde{\mathbb{E}} [ \partial_{P^{(1)}}h^{\top,\tilde{u}}\bigl(\tilde{X}^{\tilde{u}}_T, (\mathbb{P}^{\bar{u}}_{T,1})_{\bar{u}}\bigr)\bigl(X^u_T\bigr) *X^{\prime,u}_T ]\mathrm{d}m(\tilde{u})\Big] \mathrm{d}m(u)
 \\&  = \int \int_0^T \mathbb{E}\Big[Y^{\prime,u}_t \bigl( 
    -\ell^u_y(t, \Theta^u_t) -\int 
    \tilde{\mathbb{E}}  [ \partial_{P^{(2)}}\ell^{\tilde{u}}(t, \widetilde{\Theta}^{\tilde{u}}_t)(\vartheta^u_t) ] \mathrm{d}m(\tilde{u}) \bigr) -p_t^u \bigl(
  f^u_{\alpha}(t, \Theta^u_t) \pi^u_t + \\&\int \tilde{\mathbb{E}}[ \partial_{P^{(4)}}f^u(t, \Theta^u_t)(\tilde{\vartheta}^{\tilde{u}}_t) \tilde{\pi}_t^{\tilde{u}} ] \mathrm{d}m(\tilde{u})\bigr )+  Z^{\prime,u}_t \bigl( 
    -\ell^u_z(t, \Theta^u_t)
    - \int \tilde{\mathbb{E}} [ \partial_{P^{(3)}}l^{\tilde{u}}(t, \widetilde{\Theta}^{\tilde{u}}_t)(\vartheta^u_t)] \mathrm{d}m(\tilde{u}) \bigr) \\& +q^u_t\bigl( b^u_{\alpha}(t, \Theta^u_t) \pi^u_t +\int \tilde{\mathbb{E}}[ \partial_{P^{(4)}}b^u(t, \Theta^u_t)(\tilde{\vartheta}^{\tilde{u}}_t) \tilde{\pi}_t^{\tilde{u}} ] \mathrm{d}m(\tilde{u})\bigr)+  X^{\prime,u}_t  \bigl( 
    -\ell^u_x(t, \Theta^u_t) 
    -\\&\int \tilde{\mathbb{E}}[ \partial_{P^{(1)}}\ell^{\tilde{u}}(t, \widetilde{\Theta}^{\tilde{u}}_t)(\vartheta^u_t)] \mathrm{d}m(\tilde{u}) \bigr) + k^u_t \bigl(\sigma^u_{\alpha}(t, \Theta^u_t) \pi^u_t 
    + \int \tilde{\mathbb{E}} [ \partial_{P^{(4)}}\sigma^u(t, \Theta^u_t)(\tilde{\vartheta}^{\tilde{u}}_t) \tilde{\pi}_t^{\tilde{u}}  ] \mathrm{d}m(\tilde{u}) \bigr)\Big] \mathrm{d}t \mathrm{d}m(u)
\end{align*}

Combining with the result in Lemma \ref{lemma 5.6}, we have 
\begin{align*}
\int \mathbb{E} \bigg[ \int_0^T 
&  \ell^u_\alpha(t, \Theta^u_t) \pi^u_t + \int \tilde{\mathbb{E}}  [ \partial_{P^{(4)}}\ell^u(t, \Theta^u_t)(\tilde{\vartheta}^{\tilde{u}}_t) \tilde{\pi}^{\tilde{u}}_t  ] \mathrm{d}m(\tilde{u}) -\\&p_t^u \bigl(
  f^u_{\alpha}(t, \Theta^u_t) \pi^u_t +\int \tilde{\mathbb{E}} [ \partial_{P^{(4)}}f^u(t, \Theta^u_t)(\tilde{\vartheta}^{\tilde{u}}_t) \tilde{\pi}_t^{\tilde{u}}  ] \mathrm{d}m(\tilde{u}) \bigr)+
 \\&q^u_t \bigl( b^u_{\alpha}(t, \Theta^u_t) \pi^u_t +\int \tilde{\mathbb{E}} [ \partial_{P^{(4)}}b^u(t, \Theta^u_t)(\tilde{\vartheta}^{\tilde{u}}_t) \tilde{\pi}_t^{\tilde{u}}  ] \mathrm{d}m(\tilde{u})\bigr)+ \\& k^u_t \bigl(\sigma^u_{\alpha}(t, \Theta^u_t) \pi^u_t 
    + \int \tilde{\mathbb{E}} [ \partial_{P^{(4)}}\sigma^u(t, \Theta^u_t)(\tilde{\vartheta}^{\tilde{u}}_t) \tilde{\pi}_t^{\tilde{u}}  ] \mathrm{d}m(\tilde{u}) \bigr)\mathrm{d}t 
\bigg] \mathrm{d}m(u)\geq 0.
\end{align*}

Applying Fubini's theorem and rearranging the terms, we can conclude that, for any $\boldsymbol{\pi}$  such that  $\boldsymbol{\hat{\alpha}}+ \boldsymbol{\pi} \in \mathcal{A}_{ad}$, we have that
\begin{align*}
    \int_0^T  \int  \mathbb{E} \Big[&
 \Bigl\langle H^u_{\alpha}(t, X, Y, Z, \hat{\alpha}^u, \xi, p, q, k) 
\\&+  \int \tilde{\mathbb{E}}[ \partial_{P^{(4)}}H^{\tilde{u}}(t, \tilde{X}, \tilde{Y}, \tilde{Z}, \tilde{\hat{\alpha}}^u, \xi, \tilde{p}, \tilde{q}, \tilde{k})\bigl(\vartheta^u_t\bigr)] \mathrm{d}m(\tilde{u}), 
\pi^u_t  \Bigr\rangle\Big]\mathrm{d}m(u) \mathrm{d}t
\end{align*}
is always non-negative. We thus conclude the proof.
\end{proof}
\subsection{Verification Theorem}\label{sect:verification}
\begin{assumption} \label{Ass:convex}
Let $(X, Y, Z, \alpha), (X', Y', Z', \alpha')$ live in  $E:= \mathbb{R}^n \times \mathbb{R}^l \times \mathbb{R}^{l \times d} \times \mathbb{R}^k$, and $\zeta, \zeta' \in \mathcal{P}^2_m(\mathbb{R}^n)$, $\xi, \xi' \in \mathcal{P}^2_m(E)$. Let $\bar{\tilde{X}}, \bar{\tilde{X}}'$, $(\tilde{X}, \tilde{Y}, \tilde{Z}, \tilde{A}),$ and $ (\tilde{X}', \tilde{Y}', \tilde{Z}', \tilde{A}')$ be defined on $(\tilde{\Omega},\mathcal{\tilde{F}}, \tilde{\mathbb{P}})$ such that $\tilde{\mathbb{P}}_{\bar{\tilde{X}}} = \zeta$, $\tilde{\mathbb{P}}_{\bar{\tilde{X}}'} = \zeta'$, $\tilde{\mathbb{P}}_{(\tilde{X}, \tilde{Y}, \tilde{Z}, \tilde{A})} = \xi$,  and $\tilde{\mathbb{P}}_{(\tilde{X}', \tilde{Y}', \tilde{Z}', \tilde{A}')} = \xi'$, respectively.
Denote
\begin{align*}
\Theta &:= (X,Y, Z, \alpha, \xi, p, q, k), \quad \Theta' := (X',Y', Z', \alpha', \xi', p, q, k), \\
\vartheta^u &:= (X^u,Y^u, Z^u, A^u), \quad \tilde{\vartheta}^u := (\tilde{X}^u, \tilde{Y}^u, \tilde{Z}^u, \tilde{A}^u).
\end{align*}
The following conditions must hold for all values of the above variables:

\begin{itemize}
    \item[(i)] For $m$-a.e.~$u \in U$, we have
    \begin{align*}
     h^u(X, \zeta)- h^u(X^{'}, \zeta') &\leq h^u_x(X, \zeta)  (  X-X^{'}) +  \\& \int \tilde{\mathbb{E}} [ \partial_{P^{(1)}}h^u(X, \zeta)\bigl(\bar{\tilde{X}}^{\tilde{u}}\bigr)(  \bar{\tilde{X}}^{\tilde{u}}-\bar{\tilde{X}}^{\tilde{u},'})  ] \mathrm{d}m(\tilde{u}), \\
    g^u(Y)- g^u(Y^{'})  &\leq g^u_y(Y)  (Y-Y^{'} ).
    \end{align*}
    \item[(ii)] For $m$-a.e.~$u \in U$, all $t \in [0,T]$, we have
    \begin{align*}
    & H^u(t, \Theta) - H^u(t, \Theta') \leq
    \\& \quad H^u_{x}(t, \Theta) (X-X^{'})
     + H^u_{y}(t, \Theta)(Y-Y^{'} )
     +  H^u_{z}(t, \Theta)  (Z-Z^{'} )
     + H^u_{\alpha}(t, \Theta) (\alpha-\alpha' ) + \\& \quad\int \tilde{\mathbb{E}} \Big[
        \partial_{P^{(1)}}H^u(t, \Theta)\bigl(\tilde{\vartheta}^{\tilde{u}}\bigr)  (\tilde{X}^{\tilde{u}} -\tilde{X}^{\tilde{u},'} ) 
        + \partial_{P^{(2)}}H^u(t, \Theta)\bigl(\tilde{\vartheta}^{\tilde{u}}\bigr)  (\tilde{Y}^{\tilde{u}}-\tilde{Y}^{\tilde{u},'} ) \\
    & \quad+ \partial_{P^{(3)}}H^u(t, \Theta)\bigl(\tilde{\vartheta}^{\tilde{u}}\bigr)  (\tilde{Z}^{\tilde{u}}-\tilde{Z}^{{\tilde{u}},'} )
        + \partial_{P^{(4)}}H^u(t, \Theta)\bigl(\tilde{\vartheta}^{\tilde{u}}\bigr) (\tilde{A}^{\tilde{u}}-\tilde{A}^{{\tilde{u}},'} ) \Big] \mathrm{d}m(\tilde{u}).
    \end{align*}
    
    \item[(iii)] For $m$-a.e.~$u \in U$,
    \begin{align*}
    G^u(X, \zeta)-G^u(X', \zeta') 
    =&G^u_x(X, \zeta)(X-X' ) 
    + \\&\int \tilde{\mathbb{E}} [ \partial_{P^{(1)}}G^u(X, \zeta)\bigl(\bar{\tilde{X}}^{\tilde{u}}\bigr) (\bar{\tilde{X}}^{\tilde{u}}-\bar{\tilde{X}}^{\tilde{u},'} )  ] \mathrm{d}m(\tilde{u}).
    \end{align*}
\end{itemize}
\end{assumption}
\begin{theorem}[Verification theorem]\label{thm:verification}
    Let Assumptions \ref{Ass:5.2}–\ref{Ass:convex} hold. Let $\hat{\alpha}$ be an admissible control with corresponding trajectory $(X, Y, Z)$. Furthermore, let $(p, q, k)$ be the solution of the adjoint equation \eqref{adjoint} for the control $\hat{\alpha}$. If, for any $v:= (v^u)_u\in \mathcal{A}_{ad}$, we have
\begin{align} 
 &\bigl\langle H^u_{\alpha}(t, X, Y, Z, \hat{\alpha}^u, \xi, p, q, k) 
\,+ \notag \\& \label{maximum condition}  \qquad\qquad\int \tilde{\mathbb{E}}[ \partial_{P^{(4)}}H^{\tilde{u}}(t, \tilde{X}, \tilde{Y}, \tilde{Z}, \tilde{\hat{\alpha}}^u, \xi, \tilde{p}, \tilde{q}, \tilde{k})\bigl(\vartheta^u_t\bigr)] \mathrm{d}m(\tilde{u}),\,
v^u_t -\hat{\alpha}^u_t \bigr\rangle  \geq 0,
\end{align}
a.s., for a.e.~$t\in[0,T]$ and for $m$-a.e.~$u\in U$, then $\hat{\alpha}$ is an optimal control.
\end{theorem}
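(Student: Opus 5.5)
The plan is the classical convexity argument for sufficient conditions, adapted to the index integral and the lifted $L_m$-derivatives. Fix an arbitrary $\boldsymbol{v}\in\mathcal{A}_{ad}$ and let $(X^v,Y^v,Z^v)$ be the corresponding strong solution, which exists by Theorem \ref{Theorem for FBSDE}. Write $\Theta^u_t$ for the optimal-candidate arguments and $\Theta^{v,u}_t$ for those under $\boldsymbol{v}$, both with the same adjoint triple $(p,q,k)$. We aim to show $J(\hat{\boldsymbol{\alpha}})-J(\boldsymbol{v})\le 0$, and we split this difference into three parts: the running cost part, the terminal part from $h$, and the initial part from $g$.

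\textbf{Step 1 (terminal and initial terms).} Apply Assumption \ref{Ass:convex}(i) with $\zeta=(\mathbb{P}^{\tilde u}_{T,1})_{\tilde u}$ and $\zeta'$ the corresponding laws under $\boldsymbol{v}$. Use the independent copies $\tilde X_T,\tilde X^v_T$ as the lifts, which is legitimate since they realise the Markov kernels via \eqref{eq:xi^u}. Integrating in $\mathrm{d}m(u)$ and using Fubini (justified by Lemma \ref{lm:joint_measureable}), the $h$-part is bounded by
\[
\int\mathbb{E}\Bigl[\Bigl(h^u_x(X^u_T,\cdot)+\int\tilde{\mathbb{E}}\bigl[\partial_{P^{(1)}}h^{\tilde u}(\tilde X^{\tilde u}_T,\cdot)(X^u_T)\bigr]\mathrm{d}m(\tilde u)\Bigr)(X^u_T-X^{v,u}_T)\Bigr]\mathrm{d}m(u).
\]
Here the roles of $u$ and $\tilde u$ are swapped exactly as in the terminal condition of $q$ in \eqref{adjoint}. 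The $g$-part is bounded by $\int\mathbb{E}[g^u_y(Y^u_0)(Y^u_0-Y^{v,u}_0)]\mathrm{d}m(u)=-\int\mathbb{E}[p^u_0(Y^u_0-Y^{v,u}_0)]\mathrm{d}m(u)$.

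\textbf{Step 2 (duality via Itô).} Apply Itô's formula to $\langle p^u_t,Y^u_t-Y^{v,u}_t\rangle+\langle q^u_t,X^u_t-X^{v,u}_t\rangle$ on $[0,T]$, take expectations, and integrate over $u$. The stochastic integrals are true martingales by the square integrability in $\mathcal{S}^2_m\times\mathcal{H}^2_m$ together with Assumption \ref{Ass:5.3}. At $T$ we must handle the $G$-term: by the \emph{linearity} Assumption \ref{Ass:convex}(iii), $Y^u_T-Y^{v,u}_T$ equals $G^u_x(X^u_T-X^{v,u}_T)$ plus the $L_m$-derivative term. After a Fubini swap of $u\leftrightarrow\tilde u$, this cancels against the $-G_x^\top p$ and $-\partial_{P^{(1)}}G^\top\tilde p$ parts of $q^u_T$. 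The $h$-part of $q^u_T$ reproduces the bound from Step 1, and $p^u_0=-g^u_y(Y^u_0)$ handles the initial term.

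\textbf{Step 3 (combine).} After these cancellations, $J(\hat{\boldsymbol{\alpha}})-J(\boldsymbol{v})$ is bounded by
\[
\int\mathbb{E}\int_0^T\bigl[H^u(t,\Theta^u_t)-H^u(t,\Theta^{v,u}_t)\bigr]\mathrm{d}t\,\mathrm{d}m(u)
\]
minus the drift contributions of Itô's formula. Those drift contributions are precisely $H_x,H_y,H_z$ applied to the differences, together with the $\tilde u$-integrated $\partial_{P^{(i)}}H^{\tilde u}(\cdot)(\vartheta^u_t)$ terms for $i=1,2,3$, which are the adjoint drifts rewritten through $H$. Now apply the convexity Assumption \ref{Ass:convex}(ii) pointwise in $(u,t,\omega)$ with the copies $\tilde\vartheta,\tilde\vartheta^v$, and integrate. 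The $x,y,z$ terms and their $L_m$-derivative counterparts cancel after the Fubini swap $u\leftrightarrow\tilde u$ (for the $m$-integral) and $\mathbb{E}\leftrightarrow\tilde{\mathbb{E}}$ (for the independent copy). What remains is
\[
\int_0^T\!\!\int\mathbb{E}\Bigl\langle H^u_\alpha(t,\Theta^u_t)+\int\tilde{\mathbb{E}}\bigl[\partial_{P^{(4)}}H^{\tilde u}(t,\widetilde\Theta^{\tilde u}_t)(\vartheta^u_t)\bigr]\mathrm{d}m(\tilde u),\ \hat\alpha^u_t-v^u_t\Bigr\rangle\,\mathrm{d}m(u)\,\mathrm{d}t,
\]
which is $\le 0$ by \eqref{maximum condition}. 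Hence $J(\hat{\boldsymbol\alpha})\le J(\boldsymbol v)$ for every $\boldsymbol{v}\in\mathcal{A}_{ad}$, so $\hat{\boldsymbol\alpha}$ is optimal.

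\textbf{Main obstacle.} The delicate step is the bookkeeping of the swapped mean-field terms. Expressions such as $\int\mathbb{E}\int\tilde{\mathbb{E}}[\partial_P H^u(\cdot)(\tilde\vartheta^{\tilde u})(\tilde X^{\tilde u}-\tilde X^{v,\tilde u})]\mathrm{d}m(\tilde u)\mathrm{d}m(u)$ must be identified with $\int\mathbb{E}[(\int\tilde{\mathbb{E}}[\partial_PH^{\tilde u}(\widetilde\Theta^{\tilde u})(\vartheta^u)]\mathrm{d}m(\tilde u))(X^u-X^{v,u})]\mathrm{d}m(u)$. This requires joint measurability in $(u,\tilde u,t)$ and integrability, which follow from Lemma \ref{lm:joint_measureable}, the Markov kernel property of the joint laws of $(\vartheta^u,\vartheta^{v,u})$, and the $L^2$ bounds in Assumptions \ref{Ass:5.3} and \ref{assum: finite for alpha}, which make Fubini applicable. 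A second point to check is that the convexity inequality in (ii) may be applied with $\xi,\xi'$ lifted \emph{jointly} through the coupled copies $(\tilde\vartheta,\tilde\vartheta^v)$ rather than through arbitrary lifts. This is allowed because (ii) holds for all lifts with the given laws.
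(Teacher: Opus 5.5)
Your proposal is correct and follows essentially the same route as the paper: you bound the $h$- and $g$-differences via Assumption~\ref{Ass:convex}(i), apply It\^o's formula to $\langle q^u_t,\Delta X^u_t\rangle+\langle p^u_t,\Delta Y^u_t\rangle$ integrated against $m$, and then use the convexity of $H$ in (ii) together with the maximum condition. You also make explicit several steps the paper compresses into ``rearranging the terms'', namely the use of the affinity (iii) of $G$ to cancel the $G$-parts of $q^u_T$, the $u\leftrightarrow\tilde u$ Fubini swaps that match the mean-field terms, and the Markov-kernel/integrability justification for those swaps.
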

\begin{proof}
    Let $\alpha'$ be any other admissible control with corresponding dynamics $(X', Y', Z')$. Set
\[
\Delta \varphi^u := \varphi^u - \varphi^{u,'}, \quad \text{for } \varphi = X, Y, Z,
\]
From Assumption \ref{Ass:convex}, we have
\begin{align*}
&\int h^u\bigl(X^u_T, (\mathbb{P}^{\bar{u}}_{T,1})_{\bar{u}}\bigr) - h^u\bigl(X^{u,'}_T, (\mathbb{P}_{X^{\bar{u},'}_T})_{\bar{u}}\bigr) \mathrm{d}m(u)
\leq \int \Big[ h^u_x\bigl(X^u_T, (\mathbb{P}^{\bar{u}}_{T,1})_{\bar{u}}\bigr)  \Delta X^u_T 
+ \\& \qquad \int \tilde{\mathbb{E}} [ \partial_{P^{(1)}}h^u\bigl(X^u_T, (\mathbb{P}^{\bar{u}}_{T,1})_{\bar{u}}\bigr)\bigl(\tilde{X}^{\tilde{u}}_T\bigr)  \Delta \tilde{X}^{\tilde{u}}_T  ] \mathrm{d}m(\tilde{u})\Big]\mathrm{d}m(u), \\
&\int g^u(Y^u_0) - g^u(Y^{u,'}_0) \mathrm{d}m(u)
\leq \int g^u_y(Y_0)  \Delta Y^u_0 \mathrm{d}m(u).
\end{align*}
Applying Itô’s formula to $\int q^u_t \Delta X^u_t+p^u_t \Delta Y^u_t \mathrm{d}m(u)$ and rearranging the terms, we get
\begin{align*}
J(\hat{\alpha}) - J(\alpha') \leq \int& \int_0^T  \mathbb{E}  \biggl[ 
     H^u(t, \Theta_t) - H^u(t, \Theta_t')- H^u_{x}(t, \Theta) \Delta X^u_t
     - H^u_{y}(t, \Theta) \Delta Y^u_t
     - \\&  H^u_{z}(t, \Theta) \Delta Z^u_t -
      \Big(\int \tilde{\mathbb{E}} \bigl[
       \partial_{P^{(1)}} H^u(t, \Theta)\bigl(\tilde{\vartheta}^{\tilde{u}}\bigr)\Delta \tilde{X}^{\tilde{u}}_t
        + \partial_{P^{(2)}}H^u(t, \Theta)\bigl(\tilde{\vartheta}^{\tilde{u}}\bigr)  \Delta \tilde{Y}^{\tilde{u}}_t \\& +  \partial_{P^{(3)}}H^u(t, \Theta)\bigl(\tilde{\vartheta}^{\tilde{u}}\bigr)  \Delta \tilde{Z}^{\tilde{u}}_t
         \bigr] \mathrm{d}m(\tilde{u})\Big)\biggr]\mathrm{d}t\mathrm{d}m(u).
\end{align*}
Therefore, by Assumption \ref{Ass:convex}, rearranging the terms and invoking \eqref{maximum condition}, we conclude that $\hat{\alpha}$ is indeed the optimal control.
\end{proof}
\section{Well-posedness of the FBSDE system}\label{sect:well-posedness}

In the first part of this section, we construct a suitable path space on which to work with the laws of solutions to our FBSDEs. First of all, we set
\begin{equation}\label{eq:S1S2}
S_1 := C([0,T],\mathbb{R}^n) \times C([0,T],\mathbb{R}^{l}) \times L^2([0,T],\mathbb{R}^{l\times d}) \times  L^2([0,T],\mathbb{R}^{k})
\end{equation}
and $S_2:= C([0,T],\mathbb{R}^d)\times \mathbb{R}^n$, which we equip with their Borel $\sigma$-algebras. Crucially, the spaces are Polish, with the Borel $\sigma$-algebra on $C$ generated by the coordinate projections $h\mapsto h(t)$ and the one on $L^2$ generated by the continuous linear functionals $h\mapsto \int_0^T h(s)g(s)\text{d}s$.

Some care is needed for two reasons: the elements of the $L^2$ spaces are equivalence classes and their  measurability in time is with respect to the Lebesgue $\sigma$-algebra. This is resolved by working with precise representatives as follows. Using \cite[Theorem 1.34]{evans}, we can uniquely identify any given equivalence class $[z]\in L^2([0,T],\mathbb{R}^{l\times d})$ with $\bar{z}\in[z]$ defined by
\begin{equation}\label{eq:L2_time_project}
 \bar{z}(t) := \hat{z}(t)\mathbf{1}_{\hat{z}(t)\in \mathbb{R}},\;\;\text{where} \;\;\hat{z}(t)=\liminf_{\varepsilon \downarrow 0} z^\varepsilon(t),\;\;z^\varepsilon(t)=\frac{1}{\varepsilon}\int_{[(t-\varepsilon)\lor 0,t]} z(s) \text{d}s.
\end{equation}
We can then let the canonical process $z_\cdot$ on $L^2([0,T],\mathbb{R}^{l\times d})$ be given by $(t,[z])\mapsto z_t := \bar{z}(t)$. Now consider the natural filtration $\sigma(z_s:s\leq t)$.  One readily confirms that each $(t,[z])\mapsto z^\varepsilon(t)$ is predictable for this filtration and so $(t,[z])\mapsto\hat{z}(t)$ is predictable as a process with values in the extended reals. In particular, the event $\{(t,[z]):\hat{z}(t) \in \mathbb{R}\}$ belongs to the predictable $\sigma$-algebra, so we can finally conclude that $z_\cdot$ is predictable. Analogously, we introduce a canonical process $\lambda_\cdot$ on $L^2([0,T],\mathbb{R}^k)$. Then, the canonical process $(x_\cdot,y_\cdot,z_\cdot,\lambda_\cdot)$ on $(S_1,\mathcal{B}(S_1))$, given by
\begin{equation}\label{eq:canonical_process}
(x,y,[z],[\lambda])\mapsto (x_t,y_t,z_t,\lambda_t) :=  (x(t),y(t),\bar{z}(t),\bar{\lambda}(t)),
\end{equation}
is well-defined and yields a predictable process for the natural filtration
$\sigma(x_s,y_s,z_s,\lambda_s:s\leq t)$.

 When working on $\bar{\Omega}:= S_1 \times S_2$ with its Borel sigma algebra $\bar{\mathcal{{F}}}:= \mathcal{B}(S_1\times S_2 )$, we write $(w_\cdot, \chi)$ for the canonical process on $S_2$ and extend all the processes to $\bar{\Omega}$ in the obvious way.

\begin{remark}\label{rem:path_space_laws} Coming back to our FBSDE system \eqref{eq:3.2}, the Markov kernel $(\mathbb{P}^u)_u$ in Definition \ref{def:strong_soln} is realised as $\mathbb{P}^u:= \mathbb{P}\circ(X^u,Y^u,Z^u,\Lambda^u)^{-1}$ on the path space $(S_1,\mathcal{B}(S_1))$. This of course requires that $(X^u,Y^u,Z^u,\Lambda^u):\Omega \rightarrow S_1$ is a measurable map which is taken care of as part of the well-posedness arguments in this section.
\end{remark}

\subsection{Auxiliary FBSDE problem}

The exact setup introduced above will play a crucial role in the proofs of this section. For now, let $(\hat{\Omega},\hat{\mathcal{F}},\hat{\mathbb{P}})$ be some other complete probability space supporting a $d$-dimensional Brownian motion $\hat{B}$ and random variables $\hat{\chi}_0:\hat{\Omega} \rightarrow \mathbb{R}^n$ and 
$\mathfrak{u}:\hat{\Omega} \rightarrow  U$ that are (i) independent of $\hat{B}$ and (ii) satisfy that the conditional law of $\hat{\chi}_0$ given $\mathfrak{u}$ is the Markov kernel $(\text{Law}(\chi_0^u))_u$. Let $\hat{\mathbb{F}} := \{\hat{\mathcal{F}}_t\}_{t \in [0,T]}$ be the filtration generated by $(\hat{B},\hat{\chi}_0, \mathfrak{u})$, augmented with the $\hat{\mathbb{P}}$-null sets of $\hat{\mathcal{F}}$, and define the space $\hat{\mathcal{H}}^2(\mathbb{R}^n \times \mathbb{R}^l \times \mathbb{R}^{l \times d} \times \mathbb{R}^k) $ consisting of quadruple of $\hat{\mathbb{F}}$-progressively measurable processes $(\hat{X},
\hat{Y}, \hat{Z}, \hat{\Lambda})$ with values in $\mathbb{R}^n \times \mathbb{R}^l \times \mathbb{R}^{l \times d} \times \mathbb{R}^{k}$ so that
\[
\hat{\mathbb{E}}[\sup_{t \in [0,T]}|\hat{X}_t|^2]+\hat{\mathbb{E}}[\sup_{t \in [0,T]}|\hat{Y}_t|^2]
+ \int_0^T \hat{\mathbb{E}}[|\hat{Z}_t |^2 ]\mathrm{d}t+\int_0^T \hat{\mathbb{E}}[|\hat{\Lambda}_t |^2 ]\mathrm{d}t<\infty.
\] 
The central idea of this section is to `lift' our problem to the analysis of a single mean-field FBSDE, which is randomised according to the distribution $m$ on the index space $U$. That is, we look for a solution $(\hat{X}, \hat{Y}, \hat{Z}, \hat{\Lambda}) \in \hat{\mathcal{H}}^2(\mathbb{R}^n \times \mathbb{R}^l \times \mathbb{R}^{l \times d}\times\mathbb{R}^k)$  to
\begin{equation} \label{eq: FBSDE}
    \begin{cases}
&\mathrm{d}\hat{X}_t = b^{\mathfrak{u}}\bigl(t,\hat{X}_t, \hat{Y}_t, \hat{Z}_t, \hat{\Lambda}_t, (\hat{\mathbf{P}}^{u}_{t})_{u }\bigr)\mathrm{d}t+ \sigma^{\mathfrak{u}}\bigl(t,\hat{X}_t, \hat{Y}_t, \hat{Z}_t, \hat{\Lambda}_t, (\hat{\mathbf{P}}^{u}_{t})_{u}\bigr)\mathrm{d}\hat{B}_t \\
&-\mathrm{d}\hat{Y}_t = f^{\mathfrak{u}}\bigl(t,\hat{X}_t, \hat{Y}_t, \hat{Z}_t, \hat{\Lambda}_t, (\hat{\mathbf{P}}^{u}_{t})_{u}\bigr)\mathrm{d}t -\hat{Z}_t \mathrm{d}\hat{B}_t\\
\end{cases}
\end{equation}
with initial condition $\hat{X}_0 = \hat{\chi}_0$ and terminal condition 
$\hat{Y}_T = G^{\mathfrak{u}}(\hat{X}_T,(\hat{\mathbf{P}}^{u}_{T,1})_{u})$ such that (i) each $(\hat{\mathbf{P}}^u_t)_u$ is a Markov kernel characterising the conditional law of $(\hat{X}_t, \hat{Y}_t, \hat{Z}_t, \hat{\Lambda}_t)$ given $\mathfrak{u}$ for $t\in[0,T]$, (ii) we have $(\hat{X}, \hat{Y}, \hat{Z}, \hat{\Lambda}):\hat{\Omega} \rightarrow S_1$ with
\begin{equation}\label{eq:marg_laws_equal_project}
(x_t,y_t,z_t,\lambda_t)\circ (\hat{X}, \hat{Y}, \hat{Z}, \hat{\Lambda}) = (\hat{X}_t, \hat{Y}_t, \hat{Z}_t, \hat{\Lambda}_t)
\end{equation}
for $t\in[0,T]$, and (iii) we have
\begin{equation}\label{eq:Lambda_BM_expression}
\int_0^T \hat{\mathbb{E}}[|\hat{\Lambda}_t-\Lambda^{\mathfrak{u}}(t,\hat{\chi}_0, \hat{B}_{\cdot\land t})|^2]\text{d}t =0.
\end{equation}
As usual, $\hat{\mathbf{P}}^{u}_{T,1} $ is the projection of $\hat{\mathbf{P}}^{u}_{T}$ on the $\hat{X}$ component.

The auxiliary FSBDE \eqref{eq: FBSDE} will allow us to avoid dealing directly with a system of FBSDEs across the types, so we can utilise existing results and get around measurability issues in the index variable $u$. The main subject of this section is to connect this auxiliary FBSDE back to the original problem \eqref{eq:3.2}. We stress that these arguments carry over to other structural assumptions on the coefficients as long as they provide well-posedness of \eqref{eq: FBSDE}.

\begin{lemma}[Auxiliary FBSDE well-posedness]\label{Lm: FBSDE paper} 
Denote $\bar{\lambda}_1 = \lambda - 2\lambda_1 - C_1^{-1} \rho_1 - C_2^{-1} \rho_2 -(2+ C_3^{-1}  + C_4^{-1})\rho_{3} - w_1^2 - w_{4}^2$, and $ \bar{\lambda}_2 = -\lambda - 2\lambda_2 - K_1^{-1} \mu_1 - K_2^{-1}(\mu_2 + \mu_{3}) - (2+K_3^{-1}) \mu_{3}$, where $\lambda \in \mathbb{R}$,  $C_i >0, i = 1,2,3,4$ and $K_j >0 , j = 1,2,3$ are constants are to be specified. If
\[
2(\lambda_1 + \lambda_2) < -2\rho_{3} - w_1^2 - w_{4}^2 - (\mu_2 + \mu_{3})^2 - 2\mu_{3},
\]
then we can choose appropriate $\lambda$, $K_2$, and sufficiently large $C_1$, $C_2$, $C_3$, $C_4$, $K_1$, $K_3$ such that
\[
1 - K_2 \mu_2 - K_2 \mu_{3} > 0 \quad \text{and} \quad \bar{\lambda}_1, \bar{\lambda}_2 > 0.
\]
When this holds, there exists a positive constant \( \theta = 1/ [( \frac{1}{\bar{\lambda}_2} + \frac{1}{1 - K_2 \mu_2 - K_2 \mu_{3}})
( \rho^2_4 + \rho_5^2 + \frac{K_1 \mu_1 + K_3 \mu_{3}}{\bar{\lambda}_1})] \), which does not depend on T, such that if 
\[
C_1\rho_1+w_2^2+ C_3\rho_{3}+w_{4}^2 \vee C_2\rho_2+ w_3^2+ C_4\rho_{3}+w_{4}^2 \in [0, \theta),
\] 
then the FBSDE \eqref{eq: FBSDE} admits a unique solution $(\hat{X}, \hat{Y}, \hat{Z}, \hat{\Lambda}) \in \hat{\mathcal{H}}^2(\mathbb{R}^n \times \mathbb{R}^l \times \mathbb{R}^{l \times d}\times\mathbb{R}^k)$.
\end{lemma}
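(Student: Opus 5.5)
The plan is to prove Lemma \ref{Lm: FBSDE paper} by the method of continuation in the spirit of \cite{Pardoux} and \cite[Theorem 3.1]{CHEN2023105550}, but for a single randomised FBSDE on $(\hat{\Omega},\hat{\mathcal{F}},\hat{\mathbb{P}})$. Here the index $\mathfrak{u}$ is just an $\hat{\mathcal{F}}_0$-measurable random parameter. We first fix $\hat{\Lambda}_t := \Lambda^{\mathfrak{u}}(t,\hat{\chi}_0,\hat{B}_{\cdot\land t})$, which is $\hat{\mathbb{F}}$-progressive and square integrable by the standing integrability assumption on $\Lambda$. This makes \eqref{eq:Lambda_BM_expression} automatic, and uniqueness in the $\hat{\Lambda}$ component is then immediate. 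The key observation is that for processes on $\hat{\Omega}$ the conditional-law kernels given $\mathfrak{u}$ satisfy
\[
W_{2,m}^2\bigl((\hat{\mathbf{P}}^{1,u}_t)_u,(\hat{\mathbf{P}}^{2,u}_t)_u\bigr)\le \hat{\mathbb{E}}\bigl[|\hat{\Theta}^1_t-\hat{\Theta}^2_t|^2\bigr],
\]
where $\hat{\Theta}^i_t=(\hat{X}^i_t,\hat{Y}^i_t,\hat{Z}^i_t,\hat{\Lambda}_t)$. To see this, disintegrate $\hat{\mathbb{P}}$ with respect to $\mathfrak{u}$: for $m$-a.e.\ $u$, the conditional joint law of $(\hat{\Theta}^1_t,\hat{\Theta}^2_t)$ is a coupling of $\hat{\mathbf{P}}^{1,u}_t$ and $\hat{\mathbf{P}}^{2,u}_t$. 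Integrating the coupling bound against $m$ gives the claim. Combined with Assumption \ref{Assumption 4.1}, this shows that the coefficients, viewed as maps on $L^2(\hat{\Omega})$ processes, satisfy exactly the classical mean-field Lipschitz/monotonicity estimates with the same constants. Measurability of $t\mapsto(\hat{\mathbf{P}}^u_t)_u$ and of the kernel in $u$ comes from the existence of regular conditional distributions \cite[Theorem 8.5]{kallenberg}. Realising $(\hat{X},\hat{Y},\hat{Z},\hat{\Lambda})$ as an $S_1$-valued map satisfying \eqref{eq:marg_laws_equal_project} comes from choosing the precise representatives \eqref{eq:L2_time_project}, since $\hat{Z}$ and $\hat{Z}$'s precise representative agree $\mathrm{d}t\otimes\hat{\mathbb{P}}$-a.e.

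Next comes the core a priori estimate. Given two inputs, we apply It\^o's formula to $e^{-\lambda t}|\delta\hat{X}_t|^2$ and $e^{-\lambda t}|\delta\hat{Y}_t|^2$. We then use the monotonicity in Assumption \ref{Assumption 4.1}\ref{ass:2.1.2}, and Young's inequality with the weights $C_i,K_j$ on the Lipschitz terms of \ref{Ass: coef}--\ref{Ass: terminal}, bounding each $W_{2,m}$ term by the $L^2$ difference as above. This should yield the two Pardoux-type inequalities. The first bounds $\bar{\lambda}_1\|\delta\hat{X}\|_\lambda^2$ by $(C_1\rho_1+w_2^2+C_3\rho_3+w_4^2)\|\delta\hat{Y}\|_\lambda^2+(C_2\rho_2+w_3^2+C_4\rho_3+w_4^2)\|\delta\hat{Z}\|_\lambda^2$. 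The second bounds $\bar{\lambda}_2\|\delta\hat{Y}\|^2_\lambda+(1-K_2\mu_2-K_2\mu_3)\|\delta\hat{Z}\|^2_\lambda$ by $(\rho_4^2+\rho_5^2)\,\hat{\mathbb{E}}|\delta\hat{X}_T|^2$ plus $(K_1\mu_1+K_3\mu_3)\|\delta\hat{X}\|^2_\lambda$. The terminal $\delta\hat{X}_T$ term is controlled from the forward estimate, which is where $\theta$ arises. The algebraic choice of $\lambda,K_2$ and large $C_i,K_j$ under the stated inequality $2(\lambda_1+\lambda_2)<\dots$ is elementary: pick $K_2$ close to the optimiser of $K_2^{-1}(\mu_2+\mu_3)$ subject to $1-K_2(\mu_2+\mu_3)>0$, and split $\lambda$ between $\bar{\lambda}_1$ and $\bar{\lambda}_2$.

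With these estimates, existence follows by a fixed point in $\hat{\mathcal{H}}^2$ weighted by $e^{-\lambda t}$. Given $(\hat{Y}',\hat{Z}')$ and its conditional laws frozen in the forward equation, the forward SDE is well-posed by monotonicity and continuity in $x$, as in \cite{Pardoux}. Then, given $\hat{X}$, the mean-field BSDE is well-posed by the same monotone-in-$y$ theory with a Picard argument in the measure argument. The combined map is a contraction with constant $\theta^{-1}\cdot\max(\dots)<1$, by the estimates above. Uniqueness follows from the same estimate applied to two solutions.

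The main obstacle I expect is the solvability of the decoupled pieces, in particular the backward mean-field equation with only monotone, continuous dependence in $y$ and law dependence through $W_{2,m}$ of conditional laws including the $Z$-marginal. I would handle it by a further inner contraction on the law argument, using the conditional coupling bound so that the $\mu_2,\mu_3$ terms enter with a coefficient below $1$ via $K_2$. Alternatively, one could cite the single-index results of \cite{CHEN2023105550}, treating $\mathfrak{u}$ as part of the initial data.
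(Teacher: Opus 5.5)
Your proposal is correct and follows essentially the same route as the paper: fix $\hat{\Lambda}_t=\Lambda^{\mathfrak{u}}(t,\hat{\chi}_0,\hat{B}_{\cdot\land t})$, and use the disintegration/coupling bound $W_{2,m}^2\le \hat{\mathbb{E}}[|\hat{\Theta}^1_t-\hat{\Theta}^2_t|^2]$ to reduce to a single mean-field FBSDE. Then adapt the Pardoux--Tang type contraction argument of \cite[Theorem 3.1]{CHEN2023105550}, and finally pass to predictable precise representatives to obtain the $S_1$-valued realisation and \eqref{eq:Lambda_BM_expression}. You supply more detail than the paper does (the coupling argument, and how the constants in $\bar{\lambda}_1,\bar{\lambda}_2,\theta$ arise), though the argument you actually run is a contraction in the $e^{-\lambda t}$-weighted norm, not a continuation method as your opening sentence says.
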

\begin{proof}
   By the joint measurability of the map $\Lambda$, we can start by defining the $\hat{\mathbb{F}}$-progressive process $\hat{\Lambda}_t :=\Lambda^{\mathfrak{u}}(t,\hat{\chi}_0, \hat{B}_{\cdot \land t})$. We note that we shall slightly modify it later in the proof. 
   
   Write $\hat{\mathbb{Q}}_t=\mathrm{Law}(\hat{X}_t, \hat{Y}_t, \hat{Z}_t, \hat{\Lambda}_t,\mathfrak{u})$ and $\hat{\Theta}_t=(t,\hat{X}_t, \hat{Y}_t, \hat{Z}_t, \hat{\Lambda}_t)$. Then we can express the system in terms of the coefficients
  \[
   \hat{\phi}^\mathfrak{u}(\hat{\Theta}_t,\hat{\mathbb{Q}}_t):= \phi^\mathfrak{u}(\hat{\Theta}_t,(\hat{\mathbf{P}}^{u}_t)_u),
   \]
   for $\phi=b,\sigma,f,G$, where $(\hat{\mathbf{P}}^{u}_t)_u$ is the Markov kernel for the conditional law of $(\hat{X}_t, \hat{Y}_t, \hat{Z}_t, \hat{\Lambda}_t)$ given $\mathfrak{u}$. Consider any two laws $\mathbb{Q}^i_t=\mathrm{Law}(\hat{X}^i_t, \hat{Y}^i_t, \hat{Z}^i_t, \hat{\Lambda}_t,\mathfrak{u})$ for $i=1,2$. By Assumption \ref{Assumption 4.1}, we have that the coefficients $\hat{\phi}$ are Lipschitz with respect to the particular distance
   \[
   \hat{W}_{2,m}(\mathbb{Q}^1_t,\mathbb{Q}^2_t):= W_{2,m}\bigl((\hat{\mathbf{P}}^{1,u}_t)_u,(\hat{\mathbf{P}}^{2,u}_t)_u\bigr),
   \]
   where $(\hat{\mathbf{P}}^{i,u}_t)_u$ is the disintegration of $\mathbb{Q}^i_t$ with respect to $\mathfrak{u}$, and we have
  \[
   \hat{W}^2_{2,m}(\mathbb{Q}^1_t,\mathbb{Q}^2_t) \leq \hat{\mathbb{E}}[|\hat{X}^1_t-\hat{X}^2_t|^2] + \hat{\mathbb{E}}[|\hat{Y}^1_t-\hat{Y}^2_t|^2]  + \hat{\mathbb{E}}[|\hat{Z}^1_t-\hat{Z}^2_t|^2].
   \]
Furthermore, for $\phi = b,\sigma, f$, we have $\phi^{\mathfrak{u}}(t, 0,0,0,\hat{\Lambda}_t, (\xi^u)_u) = L(\mathfrak{u},\hat{\chi}_0,\hat{B}_{\cdot \wedge t}) $, where $\xi^u \circ \pi^{-1}_{1,2,3}$ is the Dirac measure at 0 for each $u \in U$, $(\xi^u)_u \circ \pi_4^{-1}$ is the conditional law of $\hat{\Lambda}_t$ given $\mathfrak{u}$ and $L$ is a measurable function. Conditional on $\mathfrak{u} =u$, this has the same law as $L(u, \chi^u_0,B^u_{\cdot \wedge t}).$ By Assumption \ref{Assumption 4.1} and Lemma \ref{lm:joint_measureable}, we can show
   $$\int_0^T \mathbb{E}[\phi^{\mathfrak{u}}(t, 0,0,0,\hat{\Lambda}_t, (\xi^u)_u)] \mathrm{d}t < \infty.$$  
  Analogously, we have
   $$\int_0^T \mathbb{E}[G^{\mathfrak{u}}( 0, (\delta^{\tilde{u}}_0)_{\tilde{u}})] \mathrm{d}t < \infty,$$ 
   where $\delta^{u}_0$ is the Dirac measure at 0 for every $u \in U$.
   In view of this, we can adapt the arguments in the proof of \cite[Theorem 3.1]{CHEN2023105550} to conclude that there is a unique solution $(\hat{X}_t, \hat{Y}_t, \hat{Z}_t, \hat{\Lambda}_t)$ to the mean-field FSBDE  \eqref{eq: FBSDE} expressed in terms of the coefficients $\hat{\phi}$ and $\hat{\mathbb{Q}}_t=\mathrm{Law}(\hat{X}_t, \hat{Y}_t, \hat{Z}_t, \hat{\Lambda}_t,\mathfrak{u})$. 
   
   It remains to deduce from this that we have a unique solution to the actual formulation of \eqref{eq: FBSDE} which satisfies the stated conditions. To this end, let $\hat{H} \in \{\hat{Z},\hat{\Lambda}\}$. Since $\mathbb{E}[\int_0^T H_s^2 \text{d}s]<\infty$, we have that $A:=\{\omega:\int_0^T \hat{H}_s(\omega)^2 \text{d}s<\infty\}$ satisfies $\hat{\mathbb{P}}(A)=1$ with $A\in \hat{\mathcal{F}}_t$ for all $t\in[0,T]$ by completeness. Replacing $\hat{H}$ with $\hat{H}\mathbf{1}_A$ and proceeding exactly as in \eqref{eq:L2_time_project}, we obtain a predictable process $\bar{H}$ such that $\bar{H}_t(\omega)=(\hat{H}_t\mathbf{1}_A)(\omega)$ for a.e.~$t\in[0,T]$, for all $\omega \in \Omega$. By Fubini's theorem, $\mathrm{Law}(\hat{X}_t,\hat{Y}_t,\hat{Z}_t,\hat{\Lambda}_t)=\mathrm{Law}(\hat{X}_t,\hat{Y}_t,\bar{Z}_t,\bar{\Lambda}_t)$ for a.e.~$t\in[0,T]$. Thus, the stochastic integrals against $\hat{B}$ are left unaltered (indistinguishable) by changing $\hat{H}$ to $\bar{H}$ and correspondingly for the laws. Changing things accordingly, but continuing to denote the modified $\bar{Z}$ and $\bar{\Lambda}_t$ as before, we then still satisfy the FBSDE in terms of $\hat{\mathbb{Q}}_t=\mathrm{Law}(\hat{X}_t,\hat{Y}_t,\hat{Z}_t,\hat{\Lambda}_t,\mathfrak{u})$, we have that \eqref{eq:Lambda_BM_expression} holds by construction, and also \eqref{eq:marg_laws_equal_project} is satisfied by construction (after also setting $\hat{X}$ and $\hat{Y}$ to zero on a null set if necessary). Finally, letting each $(\hat{\mathbf{P}}_t^u)_u$ denote the Markov kernel obtained by disintegrating $\hat{\mathbb{Q}}_t$ with respect to $\mathfrak{u}$, and re-expressing things in terms of these kernels, the proof is complete.
\end{proof}

\subsection{Existence and uniqueness}
The above readily gives us a solution to the original FBSDE system \eqref{eq:3.2} in a certain weak sense and without asserting anything about independence of the Brownian drivers. This a legitimate notion of solution in its own right, but we shall subsequently link it back to the case of a continuum of independent Brownian drivers.

\begin{theorem}[Weak FBSDE system]\label{prop:Big-to-smal_SDE} Suppose there is a solution to \eqref{eq: FBSDE}.
Then, there exists $U_0\in\mathcal{B}(U)$ with $m(U_0)=1$ so that: for each $u\in U_0$, there is a probability measure $\bar{\mathbb{P}}^u$ on $\bar{\mathcal{F}}$ and an $S_1$-valued quadruple of progressive processes $(\hat{X}^u, \hat{Y}^u,\hat{Z}^u,\hat{\Lambda}^u)$
which satisfies the FBSDE
	\begin{equation} \label{eq: FBSDE_u}
	\begin{cases}
		&\mathrm{d}\hat{X}^u_t = b^u\bigl(t,\hat{X}^u_t, \hat{Y}^u_t, \hat{Z}^u_t, \hat{\Lambda}^u_t, (\hat{\mathbb{P}}^{\tilde{u}}_{t})_{\tilde{u}}\bigr)\mathrm{d}t+ \sigma^u\bigl(t,\hat{X}^u_t, \hat{Y}^u_t, \hat{Z}^u_t, \hat{\Lambda}^u_t, (\hat{\mathbb{P}}^{\tilde{u}}_{t})_{\tilde{u}}\bigr)\mathrm{d}\hat{B}^u_t \\
		&-\mathrm{d}\hat{Y}^u_t = f^u\bigl(t,\hat{X}^u_t, \hat{Y}^u_t, \hat{Z}^u_t, \hat{\Lambda}^u_t, (\hat{\mathbb{P}}^{\tilde{u}}_{t})_{\tilde{u}}\bigr)\mathrm{d}t -\hat{Z}^u_t \mathrm{d}\hat{B}^u_t\\
	\end{cases}
\end{equation}
	on $(\bar{\Omega},\bar{\mathcal{F}},\bar{\mathbb{P}}^u)$ with initial condition $\hat{X}^u_0 = \hat{\chi}^u_0$ and terminal condition
$\hat{Y}^u_T = G^u(\hat{X}^u_T,(\hat{\mathbb{P}}^{\tilde{u}}_{T,1})_{\tilde{u}})$ such that (i) 
\[
\hat{\mathbb{P}}^u_t = \mathrm{Law}(\hat{X}^u_t, \hat{Y}^u_t, \hat{Z}^u_t, \hat{\Lambda}^u_t)
\]
for $t\in[0,T]$, (ii)
\[
(x_t,y_t,z_t,\lambda_t)\circ (\hat{X}^u, \hat{Y}^u,\hat{Z}^u,\hat{\Lambda}^u) = (\hat{X}^u_t, \hat{Y}^u_t,\hat{Z}^u_t,\hat{\Lambda}^u_t)
\]
for $t\in[0,T]$, (iii)
\[
\hat{\mathbb{P}}^u:= \bar{\mathbb{P}}^u\circ (\hat{X}^u, \hat{Y}^u,\hat{Z}^u,\hat{\Lambda}^u)^{-1} 
\]
is a Markov kernel for $\mathcal{U}$, and (iv) 
\[
\int_0^T\mathbb{E}^{\bar{\mathbb{P}}^u}[|\hat{\Lambda}^u_s-\Lambda^u(s,\hat{\chi}_0^u,
\hat{B}^u_{\cdot \land s})|^2]\text{d}s=0.
\]
Here, $\hat{B}^u$ is a Brownian motion and $\hat{\chi}^u_0$ is an independent random variable distributed according to $ \mathrm{Law}(\chi_0^u)$ on $(\bar{\Omega},\bar{\mathcal{F}},\bar{\mathbb{P}}^u)$. Moreover, we have that $(\hat{X}^u, \hat{Y}^u,\hat{Z}^u,\hat{\Lambda}^u)_{u\in U_0}$ is in $\mathcal{S}^2_m \times \mathcal{H}^2_m$.
	\end{theorem}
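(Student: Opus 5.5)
The plan is to disintegrate the law of the lifted solution with respect to $\mathfrak{u}$ on the canonical space $\bar{\Omega}=S_1\times S_2$. Let $(\hat{X},\hat{Y},\hat{Z},\hat{\Lambda})$ be the solution of \eqref{eq: FBSDE} given by Lemma \ref{Lm: FBSDE paper}. Consider the measurable map
\[
\hat{\Omega}\ni\omega\mapsto\bigl((\hat{X},\hat{Y},\hat{Z},\hat{\Lambda})(\omega),(\hat{B}(\omega),\hat{\chi}_0(\omega)),\mathfrak{u}(\omega)\bigr)\in S_1\times S_2\times U.
\]
This map is measurable into $S_1$ because of \eqref{eq:marg_laws_equal_project} and the description of the Borel $\sigma$-algebras on $C$ and $L^2$. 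Let $\mathbb{Q}$ be its joint law. Since all spaces are Polish, \cite[Theorem 8.5]{kallenberg} gives a Markov kernel $u\mapsto\bar{\mathbb{P}}^u$ from $\mathcal{B}(U)$ to $\bar{\mathcal{F}}$ with $\mathbb{Q}(\mathrm{d}\bar\omega,\mathrm{d}u)=\bar{\mathbb{P}}^u(\mathrm{d}\bar\omega)\,m(\mathrm{d}u)$. On $\bar{\Omega}$ we take $(\hat{X}^u,\hat{Y}^u,\hat{Z}^u,\hat{\Lambda}^u)$ to be the canonical process $(x,y,[z],[\lambda])$ and set $(\hat{B}^u,\hat{\chi}^u_0):=(w,\chi)$. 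Properties (ii) and (iii) then hold immediately. For (ii), this follows from the construction \eqref{eq:canonical_process}. For (iii), $\hat{\mathbb{P}}^u$ is the $S_1$-marginal of $\bar{\mathbb{P}}^u$, which is a kernel in $u$. Property (i) follows from \eqref{eq:marg_laws_equal_project}, since $\bar{\mathbb{P}}^u\circ(x_t,y_t,z_t,\lambda_t)^{-1}$ is a version of the conditional law of $(\hat{X}_t,\hat{Y}_t,\hat{Z}_t,\hat{\Lambda}_t)$ given $\mathfrak{u}=u$, and so coincides with $\hat{\mathbf{P}}^u_t$ for $m$-a.e.\ $u$.

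That $m$-a.e.\ exception has to be made uniform in $t$, which needs care. I will choose the versions $\hat{\mathbf{P}}^u_t$ to be exactly $\bar{\mathbb{P}}^u\circ(x_t,y_t,z_t,\lambda_t)^{-1}$ for all $t$ simultaneously. This is legitimate because the coefficients in \eqref{eq: FBSDE} only see the kernel through $W_{2,m}$, which is insensitive to $m$-null modifications, so the drift and terminal conditions are unchanged. The distributional facts are then handled as follows.

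\emph{Brownian motion and initial law.} Under $\bar{\mathbb{P}}^u$, $w$ is a Brownian motion and $\chi\sim\mathrm{Law}(\chi_0^u)$, independent of $w$. This holds because $\hat{B}$ is independent of $(\hat{\chi}_0,\mathfrak{u})$ and the conditional law of $\hat{\chi}_0$ given $\mathfrak{u}$ is $(\mathrm{Law}(\chi^u_0))_u$.

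\emph{Condition (iv).} Condition \eqref{eq:Lambda_BM_expression} is an expectation of a nonnegative jointly measurable functional of $(\bar\omega,\mathfrak{u})$. Writing it as $\int_U\mathbb{E}^{\bar{\mathbb{P}}^u}[\cdots]\,\mathrm{d}m(u)=0$ gives (iv) for $m$-a.e.\ $u$.

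\emph{Integrability.} The same Fubini argument applied to the $\hat{\mathcal{H}}^2$-norm yields membership in $\mathcal{S}^2_m\times\mathcal{H}^2_m$, after discarding a null set of $u$ where the norm is infinite.

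The main obstacle is showing that the FBSDE \eqref{eq: FBSDE_u} itself holds under $\bar{\mathbb{P}}^u$ for $m$-a.e.\ $u$, since stochastic integrals are not pathwise functionals. I would first transfer the filtration property. Under $\mathbb{Q}$, $w$ is a Brownian motion for the filtration generated by the canonical processes, $\chi$, and $\mathfrak{u}$; this is inherited from the $\hat{\mathbb{F}}$-progressivity of the solution on $\hat{\Omega}$. Conditioning on $\mathfrak{u}=u$ preserves this. To see it, I would check, for a countable determining family of bounded continuous test functions, times $s<t$ and $\mathcal{F}_s$-measurable cylinder functions $\psi$, that
\[
\mathbb{E}^{\bar{\mathbb{P}}^u}\bigl[\psi\,(e^{i\langle\xi,w_t-w_s\rangle}-e^{-|\xi|^2(t-s)/2})\bigr]=0,
\]
for $m$-a.e.\ $u$. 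This follows by integrating against arbitrary bounded functions of $u$, and countability lets the null sets be unified. Next, I would express the forward and backward equations as the vanishing of
\[
\mathbb{E}\Bigl[1\wedge\sup_t\Bigl|\hat{X}_t-\hat{\chi}_0-\int_0^t b\,\mathrm{d}s-\int_0^t\sigma\,\mathrm{d}\hat{B}_s\Bigr|\Bigr]
\]
and the analogous backward quantity. To make this a measurable functional of the canonical data, I would approximate the stochastic integrals in probability by Riemann sums along a fixed dyadic sequence, using a subsequence and limsup. The integrands are progressive functionals of the canonical process by \eqref{eq:canonical_process}, and after the choice of versions above the coefficients depend on $u$ only through fixed measurable maps. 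Disintegration then gives that the Riemann-sum limits define the $\bar{\mathbb{P}}^u$-stochastic integrals for $m$-a.e.\ $u$, and the equation holds $\bar{\mathbb{P}}^u$-a.s. The terminal condition with $G^u$ is handled the same way. Collecting all countably many exceptional null sets and taking the complement inside a Borel set yields the required $U_0\in\mathcal{B}(U)$ with $m(U_0)=1$.
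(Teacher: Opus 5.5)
Your architecture matches the paper's. You push the solution forward to $\bar\Omega\times U$, disintegrate in $u$, and re-select the kernels as $\bar{\mathbb P}^u\circ(x_t,y_t,z_t,\lambda_t)^{-1}$; the paper does this at the end, you at the start. You get (iv) and the integrability by Fubini, and then identify the stochastic integrals under $\bar{\mathbb P}^u$. Your characteristic-function argument for the Brownian property of $w$ under $\bar{\mathbb P}^u$ is more explicit than the paper's.

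\textbf{The gap.} The problem is the step that carries the weight. Riemann sums along a fixed dyadic sequence evaluate the integrand at a fixed countable set of times. They converge to the It\^o integral for integrands with some regularity in time (continuous or left-continuous adapted), but not for merely progressive ones. Here $z_t$ and $\lambda_t$ are the precise representatives from \eqref{eq:L2_time_project}, and $\sigma^u$ is only Borel in $t$.

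Concretely, take an open $O\subset(0,T)$ of small Lebesgue measure containing an interval $(d-\delta_d,d)$ for every $d=iT2^{-n}\in(0,T)$. The precise representative of $\mathbf{1}_O$ equals $1$ at all these $d$ and $0$ at $t=0$. So the level-$n$ dyadic Riemann sums for $\int_0^T\bar z\,\mathrm{d}w$ with $[z]=[\mathbf{1}_O]$ equal $w_T-w_{T2^{-n}}\to w_T$. But $\int_0^T\mathbf{1}_O\,\mathrm{d}w$ has variance $\mathrm{Leb}(O)<T$.

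Hence your limsup functional need not equal the stochastic integral, even on $\hat\Omega$. The sentence ``disintegration then gives that the Riemann-sum limits define the $\bar{\mathbb P}^u$-stochastic integrals'' also has no mechanism behind it. An a.s.\ limit of pathwise functionals under $\bar{\mathbb P}^u$ is the $\bar{\mathbb P}^u$-integral only in one of two cases: the approximating integrands converge to the true one in $L^2(\bar{\mathbb P}^u\otimes\mathrm{d}t)$, or the approximation scheme converges under every measure that makes $w$ a Brownian motion in the canonical filtration.

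\textbf{The repair.} This fits your scheme, in either of two ways.

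\emph{Regularise before discretising.} For progressive $H$ with $\int_0^T|H_s|^2\mathrm{d}s<\infty$, the process $H^{(k)}_t=k\int_{(t-1/k)\vee 0}^t H_s\,\mathrm{d}s$ is continuous, adapted, and a measurable functional of the canonical path, with $\int_0^T|H^{(k)}_s-H_s|^2\mathrm{d}s\to0$ pathwise. This convergence, and the Riemann-sum convergence for continuous adapted integrands, hold under any measure for which $w$ is a Brownian motion in the canonical filtration. Choose subsequences under $\mathbb Q$ and take nested limsups; the subsequences transfer to $\bar{\mathbb P}^u$ for $m$-a.e.\ $u$ by disintegration. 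This yields one functional that equals the stochastic integral under $\mathbb Q$ and under $\bar{\mathbb P}^u$.

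\emph{Follow the paper.} After a cut-off, take simple integrands $H^n\to H$ boundedly and $\bar{\mathbb P}'\otimes\mathrm{Leb}$-a.e., with $\int H^n\,\mathrm{d}w\to\int H\,\mathrm{d}w$ $\bar{\mathbb P}'$-a.s. By the disintegration, for $m$-a.e.\ $u$ the same holds $\bar{\mathbb P}^u$-a.s., and $\int_0^T\mathbb E^{\bar{\mathbb P}^u}[|H^n_s-H_s|^2]\mathrm{d}s\to0$. So the pathwise simple integrals converge to both the $\bar{\mathbb P}'$-integral and the $\bar{\mathbb P}^u$-integral, which therefore coincide $\bar{\mathbb P}^u$-a.s.

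With either fix, the rest of your argument goes through.
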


\begin{proof}
	Let $(\hat{X}, \hat{Y}, \hat{Z}, \hat{\Lambda})$ be a solution to \eqref{eq: FBSDE} on $(\hat{\Omega},\hat{\mathcal{F}},\hat{\mathbb{P}})$ with Markov kernel $(\hat{\mathbf{P}}^u_t)_u$. Consider the probabilistic setup from the start of the section, and set $\bar{\Omega}^\prime = \bar{\Omega}\times U$ and $\bar{\mathcal{F}}^\prime=\mathcal{B}(\bar{\Omega}^\prime)$, which we recall yields a standard Borel space.
    
     Now consider the induced probability measure $\bar{\mathbb{P}}^\prime$ on $(\bar{\Omega}^\prime,\bar{\mathcal{F}}^\prime)$ given by
    \begin{equation}\label{eq:P_prime_Law}
    \bar{\mathbb{P}}^\prime:= \hat{\mathbb{P}} \circ (\hat{X},\hat{Y},\hat{Z},\hat{\Lambda},\hat{B},\hat{\chi},\mathbf{u})^{-1}. 
    \end{equation}
Write $\Theta_{t}=(t,x_t, y_t, z_t, \lambda_t)$. From the corresponding properties of $(\hat{X},\hat{Y},\hat{Z},\hat{\Lambda})$, we get
\[
\hat{\mathbb{E}}^\prime[\int_0^T|z_s|^2 +| \sigma^u(\Theta_s,(\hat{\mathbf{P}}^{\tilde{u}}_{s})_{\tilde{u}})|^2\mathrm{d}s] <\infty \quad\text{and}\quad \int_0^T\bar{\mathbb{E}}^\prime[|\lambda_s-\Lambda^u(s,\chi,w_{\cdot \land s})|^2]\text{d}s=0.
\]
Now define a filtration $\bar{\mathbb{F}}^\prime=(\bar{\mathcal{F}}^\prime_t)_{t\in[0,T]}$ by
\begin{equation}\label{eq:path_space_filtration}
\bar{\mathcal{F}}^\prime_t=\sigma(x_s,y_s,z_s,\lambda_s,w_s,\chi,u :s\leq t ),\quad t\in[0,T],
\end{equation}
augmented with the $\bar{\mathbb{P}}^\prime$-null sets, where the generating processes are the canonical processes defined at the start of the section. Recall that they are predictable for this filtration. Thus, the stochastic integrals $\int z_s \text{d}w_s$ and $\int \sigma^u(\Theta_s,(\hat{\mathbf{P}}^{\tilde{u}}_{s})) \text{d}w_s$ are well-defined. Moreover, we see that $w$ is a Brownian motion and $(\chi,u)$ has the correct law. Therefore, we can define 
\begin{equation} \label{eq:prob1_FBSDE}
   \begin{cases}
&\mathfrak{f}(t) := x_t - \chi - \int_0^t b^u\bigl(\Theta_s, (\hat{\mathbf{P}}^{\tilde{u}}_{s})_{\tilde{u}}\bigr)\mathrm{d}s - \int_0^t\sigma^u\bigl(\Theta_s, (\hat{\mathbf{P}}^{\tilde{u}}_{s})_{\tilde{u}}\bigr)\mathrm{d}w_s \\
 &\mathfrak{b}(t) := y_t - G\bigl(x_T,( \hat{\mathbf{P}}^{\tilde{u}}_{T,1})_{\tilde{u}}\bigr) -  \int_t^T f^u\bigl(\Theta_s, (\hat{\mathbf{P}}^{\tilde{u}}_{s})_{\tilde{u}}\bigr)\mathrm{d}s + \int_t^T z_s \mathrm{d}w_s.\\
\end{cases}   
\end{equation}
If $\bar{\mathbb{P}}^\prime(\mathfrak{f}(t)=\mathfrak{b}(t)=0)=1$ for each $t\in [0,T]$, then (by continuity of $\mathfrak{f}$ and $\mathfrak{b}$) we have that $(x_t,y_t,z_t,\lambda_t)_{t\in[0,T]}$ solves \eqref{eq: FBSDE} on $(\bar{\Omega}^\prime,\bar{\mathcal{F}}^\prime,\bar{\mathbb{F}}^\prime, \bar{\mathbb{P}}^\prime)$. To establish this, we approximate the integrands in \eqref{eq:prob1_FBSDE} by suitable simple integrands in probability. The form of the filtration $\bar{\mathbb{F}}^\prime$ gives that the terms may be taken to be functions of $(\Theta_\cdot,w_\cdot,\chi,u)$, and one can then confirm that, evaluated at $(\Theta,\hat{B},\hat{\chi},\mathfrak{u})$, we obtain simple integrands for which the corresponding integrals converge in probability to those in \eqref{eq: FBSDE}. For fixed $t$, $\mathfrak{f}^n(t)$ and $\mathfrak{b}^n(t)$ defined for the simple integrals are equal in law to the analogous expressions for \eqref{eq: FBSDE}, by the definition of $\bar{\mathbb{P}}^\prime$. Thus, their limits in probability as $n\rightarrow \infty$ must have the same law, and so we conclude that $\bar{\mathbb{P}}^\prime(\mathfrak{f}(t)=\mathfrak{b}(t)=0)=1$ from the corresponding fact for \eqref{eq: FBSDE}.

In the above, $u$ remains a random variable $(\Theta_\cdot,w_\cdot,\chi,u) \mapsto u$ on $\bar{\Omega}^\prime$, so what we have established so far is the path space equivalent of \eqref{eq: FBSDE}. To get towards \eqref{eq: FBSDE_u}, we disintegrate $\bar{\mathbb{P}}^\prime$ with respect to $(\Theta_\cdot,w_\cdot,\chi,u) \mapsto u$. By \cite[Corollary 10.4.10]{bogachev}, this gives us a Markov kernel $(\bar{\mathbb{P}}^u)_u$ for $\mathcal{B}(U)$  such that
 \begin{equation}\label{eq:disint}
\bar{\mathbb{P}}^\prime \bigl( \{(x_\cdot,y_\cdot,z_\cdot,w_\cdot,\chi)\in A\}\cap\{ u\in D\} \bigr) = \int_D \bar{\mathbb{P}}^u(A\!\times \!\{u\}) \text{d} m(u),\quad A\in \bar{\mathcal{F}},\; D\in \mathcal{B}(U),
\end{equation}
where each $\bar{\mathbb{P}}^u$ is supported on the fibre $\bar{\Omega}\times \{u\}\cong \bar{\Omega}$, so we shall view it as a probability measure on $\bar{\mathcal{F}}$. It follows immediately that there is a set $U_0\in \mathcal{B}(U)$ with $m(U_0)=1$ so that, for $u \in U_0$, $\bar{\mathbb{P}}^u(\mathfrak{f}(t)=\mathfrak{b}(t)=0)=1$ for all $t\in[0,T]$ and $\int_0^T\mathbb{E}^{\bar{\mathbb{P}}^u}[|\lambda_s-\Lambda^u(s,\chi,w_{[0,s]})|^2]\text{d}s=1$. Likewise, we can take $U_0$ so that, for $u\in U_0$, $w_\cdot$ is a Brownian motion on $(\bar{\Omega},\bar{\mathcal{F}},\bar{\mathbb{\mathbb{F}}},\bar{\mathbb{P}}^u)$, $\chi\sim \mathrm{Law}(\chi_0^u)$, and the stochastic integrals $\int z_s \text{d}w_s$ and $\int \sigma^u(\Theta_{s},(\hat{\mathbf{P}}^{\tilde{u}}_{s})_{\tilde{u}})\text{d}w_s$ are well-defined on that space, where $u$ is now treated as fixed. It remains to argue that, on $(\bar{\Omega}^\prime,\bar{\mathcal{F}}^\prime,\bar{\mathbb{P}}^u)$, these integrals agree with those in \eqref{eq:prob1_FBSDE}. Then, $(x_t,y_t,z_t,\lambda_t)_{t\in[0,T]}$ is the desired solution to \eqref{eq: FBSDE_u} on $(\bar{\Omega},\bar{\mathcal{F}},\bar{\mathbb{\mathbb{F}}},\bar{\mathbb{P}}^u)$. The approach is again one of approximation. Write $\int H_s\text{d}w_s$ for either of
$\int z_s \text{d}w_s$ or $\int \sigma^u(\Theta_{s},(\hat{\mathbf{P}}^{\tilde{u}}_{s})_{\tilde{u}})\text{d}w_s$ on $(\bar{\Omega}^\prime,\bar{\mathcal{F}}^\prime,\bar{\mathbb{F}}^\prime, \bar{\mathbb{P}}^\prime)$, and assume $z$ and $\sigma$ are bounded ($\bar{\mathbb{P}}^\prime$-a.s.~for a.e.~$t$), as we may otherwise apply a cut-off and another analogous limiting argument.
Then, we can find simple integrands $H^{n}$ that are $\bar{\mathbb{P}}^\prime\otimes \mathrm{Leb}$-a.e.~convergent and bounded such that $\int_0^t H^n_s\text{d}w_s$ converges $\bar{\mathbb{P}}^\prime$-a.s.~to $\int_0^t H_s 
\text{d}w_s$. By \eqref{eq:disint}, we can take $U_0$ to be such that the same is true with respect to $\bar{\mathbb{P}}^u$ for each $u\in U_0$. Since $\bar{\mathbb{P}}^u$ is supported on $\bar{\Omega}\times \{u\}$, we deduce that $\int_0^T\mathbb{E}^{\bar{\mathbb{P}}^u}[|H^n_s-H^u_s|^2]\text{d}s$ tends to zero, where $H^u$ stands for $H$ with $u$ held fixed. Thus, the simple integrals $\int_0^t H^n_s 
\text{d}w_s$ tend to both $\int_0^t H_s 
\text{d}w_s$ and $\int_0^t H^u_s 
\text{d}w_s$ on $(\bar{\Omega}^\prime,\bar{\mathcal{F}}^\prime,\bar{\mathbb{F}}^\prime, \bar{\mathbb{P}}^u)$ in probability, for $t\in[0,T]$, and hence we conclude that the two are indistinguishable under $\bar{\mathbb{P}}^u$.

For a given $u\in U_0$, we may view $\bar{\mathbb{P}}^u$ as a probability measure on $(\bar{\Omega},\bar{\mathcal{F}})$, as it is supported on $\bar{\Omega}\times\{u\}$. We then set $\hat{X}_t^u(x,y,[z],[\lambda],\chi,w)= x_t$ and likewise for $\hat{Y}^u$, $\hat{Z}^u$, $\hat{\Lambda}^u$, $\hat{\chi}_0^u$, and $\hat{B}^u$, noting that we may also treat these as defined on $\bar{\Omega}^\prime$ in the obvious way. Then, recalling the definition of $\bar{\mathbb{P}}^\prime$ in \eqref{eq:P_prime_Law}, and using properties (i) and (ii) of \eqref{eq: FBSDE} as well as the defining property \eqref{eq:disint} of $\bar{\mathbb{P}}^u$, it follows that, for every $t\in[0,T]$, we have 
\begin{align*}
\int_D \hat{\mathbf{P}}^{u}_{t}(O)\text{d}m(u)  &= \bar{\mathbb{P}}^\prime\bigl( \{(\hat{X}_t^u,\hat{Y}_t^u,\hat{Z}_t^u,\hat{\Lambda}_t^u)\in O\} \cap \{\mathfrak{u}\in D \}\bigr)\\ 
&= \int_D \bar{\mathbb{P}}^u(\{(\hat{X}_t^u,\hat{Y}_t^u,\hat{Z}_t^u,\hat{\Lambda}_t^u)\in O\}\times \{u\}) \text{d} m(u) \\
&= \int_D \bar{\mathbb{P}}^u \circ (\hat{X}_t^u,\hat{Y}_t^u,\hat{Z}_t^u,\hat{\Lambda}_t^u)^{-1}(O) \text{d} m(u),
\end{align*}
for all $D\in \mathcal{B}(U)$ and $O\in \mathcal{B}(\mathbb{R}^n \times \mathbb{R}^l \times \mathbb{R}^{l \times d}\times\mathbb{R}^k)$, where the last line (and the remaining part of the proof) comes back to viewing $\hat{\mathbb{P}}^u$ as a measure on $\bar{\mathcal{F}}$. Hence, we may define $\hat{\mathbb{P}}^u_t:= \bar{\mathbb{P}}^u \circ (\hat{X}_t^u,\hat{Y}_t^u,\hat{Z}_t^u,\hat{\Lambda}_t^u)^{-1}$ and
replace $(\hat{\mathbf{P}}^{u}_{t})_u$ with $(\hat{\mathbb{P}}^u_t)$ in the dynamics of the FBSDE \eqref{eq:prob1_FBSDE} and its terminal condition without affecting the fact that it is satisfied by $(\hat{X}^u,\hat{Y}^u,\hat{Z}^u,\hat{\Lambda}^u)$. This confirms that \eqref{eq: FBSDE_u} holds with the conditions (iv) and (i) both being satisfied. Finally, we note that condition (ii) holds by definition of the processes and, in turn, (iii) holds by construction of $\hat{\mathbb{P}}$. This completes the proof.
\end{proof}

\begin{corollary}[Existence of strong solutions] \label{co:existence}
Let the conditions of Lemma \ref{Lm: FBSDE paper}  be in force. Then, the system of FBSDEs \eqref{eq:3.2} admits a strong solution in the sense of Definition \ref{def:strong_soln}.
\end{corollary}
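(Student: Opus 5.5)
We begin from Lemma \ref{Lm: FBSDE paper}, which gives a solution $(\hat{X},\hat{Y},\hat{Z},\hat{\Lambda})$ to the randomised FBSDE \eqref{eq: FBSDE}, and then apply Theorem \ref{prop:Big-to-smal_SDE}. This yields $U_0$ with $m(U_0)=1$ and, for each $u\in U_0$, a weak solution on $(\bar{\Omega},\bar{\mathcal{F}},\bar{\mathbb{P}}^u)$ driven by the canonical Brownian motion $\hat{B}^u=w$ and initial condition $\hat{\chi}^u_0=\chi$. The Markov kernel $(\hat{\mathbb{P}}^u_t)_u$ enters \eqref{eq: FBSDE_u} only as a \emph{fixed} input. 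For each fixed $u$, \eqref{eq: FBSDE_u} is therefore a classical (non mean-field) FBSDE with $\Lambda$ given. Its coefficients are $(x,y,z)\mapsto \phi^u(t,x,y,z,\lambda,(\hat{\mathbb{P}}^{\tilde u}_t)_{\tilde u})$, which satisfy the Pardoux--Tang monotonicity and smallness conditions of \cite{Pardoux} by Assumption \ref{Assumption 4.1} (the measure terms drop out because the measure is frozen). Hence for each $u\in U_0$ pathwise uniqueness holds for this frozen-measure FBSDE.

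The key step is a Yamada--Watanabe type argument applied $u$ by $u$. Pathwise uniqueness together with weak existence gives a measurable map $F^u: S_2\to S_1$ such that $(\hat{X}^u,\hat{Y}^u,\hat{Z}^u,\hat{\Lambda}^u)=F^u(w,\chi)$ $\bar{\mathbb{P}}^u$-a.s. Moreover, the strong solution of the frozen FBSDE on the original space, driven by $(B^u,\chi_0^u)$, is given by $X^u:=F^u(B^u,\chi^u_0)$ and so on. Its law equals $\hat{\mathbb{P}}^u$, so the mean-field consistency $\mathbb{P}^u_t=\hat{\mathbb{P}}^u_t$ holds. This means $(X^u,Y^u,Z^u,\Lambda^u)$ solves the actual system \eqref{eq:3.2}, with $\mathbb{F}^u$-progressivity inherited from adaptedness of $F^u$. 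The condition on $\Lambda^u$ transfers from (iv) of Theorem \ref{prop:Big-to-smal_SDE} by equality of joint laws with the drivers. For $u\notin U_0$ we may set, for instance, the solution of the frozen FBSDE (or zero). This is harmless, since Definition \ref{def:strong_soln} only requires the Markov property on $U_0$, and the dynamics are insensitive to $m$-null modifications.

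The remaining checks are as follows.
\begin{enumerate}
\item The law $\mathbb{P}^u=\mathbb{P}\circ(X^u,Y^u,Z^u,\Lambda^u)^{-1}$ coincides with $\hat{\mathbb{P}}^u$, which is a Markov kernel for $\mathcal{B}(U_0)$ by (iii) of Theorem \ref{prop:Big-to-smal_SDE}.
\item The projection property \eqref{eq:project_property} follows from (ii) there and the equality of laws.
\item Membership in $\mathcal{S}^2_m\times\mathcal{H}^2_m$ follows from the last statement of that theorem, again by equality of laws.
\end{enumerate}
Measurability of $(X^u,\dots):\Omega\to S_1$ is automatic as a composition of measurable maps.

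The main obstacle is the Yamada--Watanabe step for FBSDEs. Here the solution includes $Z$, and solutions live in the path space $S_1$ containing $L^2$ components, so the classical SDE version does not apply verbatim. I would follow the approach for BSDEs and FBSDEs via the joint law of $(\text{solution},w,\chi)$: take two weak solutions on the same space with the same driver, couple them through their regular conditional laws given $(w,\chi)$, verify that $w$ remains a Brownian motion for the enlarged filtration (using the progressive/predictable structure of the canonical process from \eqref{eq:canonical_process}), and then invoke pathwise uniqueness from \cite{Pardoux} to conclude that the conditional law is a Dirac mass. This Dirac mass defines $F^u$. Note that no measurability of $u\mapsto F^u$ is needed, since the Markov property comes from the laws rather than the maps.
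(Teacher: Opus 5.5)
Your proposal is correct and follows essentially the paper's route: take the weak solutions for $u\in U_0$ from Theorem \ref{prop:Big-to-smal_SDE}, freeze the kernel, obtain pathwise uniqueness from \cite[Theorem 3.1]{Pardoux}, and use a Yamada--Watanabe argument to identify the law of a strong solution on $(\Omega,\mathcal{F},\mathbb{P})$ with $\hat{\mathbb{P}}^u$, from which the Markov kernel property follows. The paper differs only in that it takes the strong solution on the original space directly from \cite[Theorem 3.1]{Pardoux} and cites \cite[Theorem 1.5, Lemma 2.10]{kurtz2014weakstrongsolutionsgeneral} for uniqueness in joint law, rather than constructing $F^u$ by hand; also, for $u\notin U_0$ you should use the frozen-kernel solution (your first option, which is the paper's choice), since zero does not satisfy \eqref{eq:3.2} or the $\Lambda$-condition at those indices.
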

\begin{proof}
    
Let $U_0 \in \mathcal{B}(U)$ with $m(U_0)=1$ be such that we have weak solutions in the sense of Theorem \ref{prop:Big-to-smal_SDE} for every $u\in U_0$. We work on a given complete probability space $(\Omega,\mathcal{F},\mathbb{P})$, as fixed at the start of the paper, with a family of independent Brownian motions $(B^u)_u$ and initial random variables $(\chi^u_0)_u$ independent of the Brownian motions.
For a fixed $u\in U_0$, we consider the FBSDE
\begin{equation} \label{eq: FBSDE_change}
    \begin{cases}
&\mathrm{d}\tilde{X}^u_t = b^u\bigl(t,\tilde{X}^u_t, \tilde{Y}^u_t, \tilde{Z}^u_t, \tilde{\Lambda}^u_t, (\hat{\mathbb{P}}^{\tilde{u}}_{t})_{\tilde{u}}\bigr)\mathrm{d}t+ \sigma^u\bigl(t,\tilde{X}^u_t, \tilde{Y}^u_t, \tilde{Z}^u_t, \tilde{\Lambda}^u_t, (\hat{\mathbb{P}}^{\tilde{u}}_{t})_{\tilde{u}}\bigr)\mathrm{d}B^u_t \\
&-\mathrm{d}\tilde{Y}^u_t = f^u\bigl(t,\tilde{X}^u_t, \tilde{Y}^u_t, \tilde{Z}^u_t, \tilde{\Lambda}^u_t, (\hat{\mathbb{P}}^{\tilde{u}}_{t})_{\tilde{u}}\bigr)\mathrm{d}t -\tilde{Z}^u_t \mathrm{d}B^u_t\\
\end{cases}
\end{equation}
with initial condition $\tilde{X}^u_0 = \chi^u_0$ and terminal condition
$\tilde{Y}^u_T = G^u(\tilde{X}^u_T,(\hat{\mathbb{P}}^{\tilde{u}}_{T,1})_{\tilde{u}})$. We set $\tilde{\Lambda}^u_t :=\Lambda^u(t,\chi_0^u,
B^u_{\cdot \land t})$, noting that this is $\mathbb{F}^u$-progressive (recall that $\mathbb{F}^u$ is the completed filtration generated by $B^u$ and $\chi^u_0$) and we let the Markov kernels be those from Theorem \ref{prop:Big-to-smal_SDE}, so these and $\tilde{\Lambda}^u$ are given exogenously. Thus, \eqref{eq: FBSDE_change} is a standard (fully coupled) FBSDE with progressively measurable coefficient functions.  By Assumption \ref{Assumption 4.1} and the conditions of  Lemma \ref{Lm: FBSDE paper}, it follows from \cite[Theorem 3.1]{Pardoux} that \eqref{eq: FBSDE_change} has a unique strong solution $(\tilde{X}^u, \tilde{Y}^u, \tilde{Z}^u)$ for the given definition of $\tilde{\Lambda}^u$. Following the same steps as in the proof of Lemma \ref{Lm: FBSDE paper}, we re-define $\tilde{Z}^u$ and $\tilde{\Lambda}^u$ to be predictable $L^2([0,T],\mathbb{R}^{l\times d})$- and $L^2([0,T],\mathbb{R}^{k})$-valued processes, respectively, without affecting the FBSDE. We continue to denote them by $\tilde{Z}^u$ and $\tilde{\Lambda}^u$, but stress that we now only have $\int_0^T\mathbb{E}[|\tilde{\Lambda}^u_s-\Lambda^u(s,\chi_0^u,
B^u_{\cdot \land s})|^2]\text{d}s=0$.

Recall $\bar{\Omega}=S_1\times S_2$ introduced in \eqref{eq:S1S2}, let $\bar{\mathcal{F}}$ be its Borel $\sigma$-algebra, and let $\mathbb{G}=(\mathcal{G}_t)_{t\in[0,T]}$ be given by $\mathcal{G}_t=\sigma(x_s,y_s,z_s,\lambda_t,\chi,w_s:s\leq t)$. In view of the above, as in the proof of Theorem \ref{prop:Big-to-smal_SDE}, the law $ \tilde{\mu}^u=\mathbb{P}^u\circ (\tilde{X}^u, \tilde{Y}^u,\tilde{Z}^u,\tilde{\Lambda}^u,\chi_0^u,B^u)^{-1}$ is well-defined on $(\bar{\Omega},\bar{\mathcal{F}})$, and we have $(x_t,y_t,z_t,\lambda_t)\circ (\tilde{X}^u, \tilde{Y}^u,\tilde{Z}^u,\tilde{\Lambda}^u) = (\tilde{X}^u_t, \tilde{Y}^u_t,\tilde{Z}^u_t,\tilde{\Lambda}^u_t)$ for $t\in[0,T]$.

In the notation of \cite{kurtz2014weakstrongsolutionsgeneral}, we set $\nu=\mathbb{P}\circ(\chi_0^u,B^u)^{-1}=\mathrm{Law}(\chi_0^u)\otimes \mathcal{W}$, where $\mathcal{W}$ is the Wiener measure, and let $\mathcal{S}_{\Gamma,\nu}$ denote the set of joint laws $\mu\in \mathcal{P}(S_1\times S_2)$ such that (i) $\mu(S_1\times \cdot)=\nu$ and (ii) $\mu$ satisfies the following constraint $\Gamma$: $\mu(\mathfrak{f}(t)=\mathfrak{b}(t)=0)=1$ for all $t\in [0,T]$, where $\mathfrak{f}(t)$ and $\mathfrak{b}(t)$ from \eqref{eq:prob1_FBSDE} are defined with $\hat{\mathbb{P}}^{\tilde{u}}_{\cdot}$ in place of $\hat{\mathbf{P}}^{\tilde{u}}_{\cdot}$ and with the stochastic integrals constructed under $\mu$ (for the filtration $\mathbb{G}$). It follows as in the proof of Theorem \ref{prop:Big-to-smal_SDE} that the constraint $\Gamma$ is well-defined and that $\tilde{\mu}^u,\hat{\mu}^u\in \mathcal{S}_{\Gamma,\nu}$, where $\hat{\mu}^u=\mathbb{P}^u\circ (\hat{X}^u, \hat{Y}^u,\hat{Z}^u,\hat{\Lambda}^u,\hat{\chi}_0^u,\hat{B}^u)^{-1}$. By \cite[Theorem 3.1]{Pardoux}, we have strong (pathwise) uniqueness, so \cite[Theorem 1.5]{kurtz2014weakstrongsolutionsgeneral} and \cite[Lemma 2.10]{kurtz2014weakstrongsolutionsgeneral} gives that $\tilde{\mu}^u=\hat{\mu}^u$. Using property (ii) of Theorem \ref{prop:Big-to-smal_SDE} and the same for $(\tilde{X}^u, \tilde{Y}^u,\tilde{Z}^u,\tilde{\Lambda}^u)$, we can therefore conclude that
\[
\tilde{\mathbb{P}}^u =\hat{\mathbb{P}}^u \quad \text{and} \quad \tilde{\mathbb{P}}_t^u =\hat{\mathbb{P}}_t^u,
\]
for all $t\in [0,T]$, where
\begin{equation}\label{eq:tilde_path_laws}
\tilde{\mathbb{P}}^u := \mathbb{P}\circ (\tilde{X}^u, \tilde{Y}^u,\tilde{Z}^u,\tilde{\Lambda}^u)^{-1}
\end{equation}
and
\begin{equation}\label{eq:tilde_marginal_laws}
\tilde{\mathbb{P}}_t^u := \mathbb{P}\circ (\tilde{X}_t^u, \tilde{Y}_t^u,\tilde{Z}_t^u,\tilde{\Lambda}_t^u)^{-1}=\tilde{\mathbb{P}}^u\circ (x_t,y_t,z_t,\lambda_t)^{-1}.
\end{equation}
 Repeating the above for every $u\in U_0$ yields a solution $(\tilde{X}, \tilde{Y},\tilde{Z},\tilde{\Lambda}) \in \mathcal{S}^2_m \times \mathcal{H}^2_m$ to the system of FBSDEs \eqref{eq:3.2} with $U_0$ in place of $U$.

 For $u\in U\setminus U_0$, we can still construct a strong $S_1$-valued solution to \eqref{eq: FBSDE_change}, as in the above, with the Markov kernels given exogenously in terms of $(\tilde{\mathbb{P}}^u)_{u\in U_0}$. Defining the law $\tilde{\mathbb{P}}^u$ as in \eqref{eq:tilde_path_laws} for $u\in U\setminus U_0$, we then also have \eqref{eq:tilde_marginal_laws} for all $u\in U$, by construction. Of course, we cannot just assert that $(\tilde{\mathbb{P}}^u)_{u\in U}$ is a Markov kernel for $\mathcal{B}(U)$, but $m(U\setminus U_0)=0$ implies that it is at least a Markov kernel for the completion of $\mathcal{B}(U)$ with respect to $m$. Note also that the values of $\tilde{\mathbb{P}}^u$ on $U\setminus U_0$ does not affect the FBSDE dynamics and the terminal condition. Thus, we have indeed constructed a strong solution $(\tilde{X}, \tilde{Y},\tilde{Z},\tilde{\Lambda}) \in \mathcal{S}^2_m \times \mathcal{H}^2_m$ to the system \eqref{eq: FBSDE_change} in the sense of Definition \ref{def:strong_soln} and so the proof is complete.
\end{proof}

The next observation essentially confirms that we can argue in reverse in Corollary \ref{co:existence} and Theorem \ref{prop:Big-to-smal_SDE}. It also gives us a convenient way of deducing uniqueness.

\begin{prop}\label{prop:small_to_big} Let Assumption \ref{Assumption 4.1} hold and let the conditions of Lemma \ref{Lm: FBSDE paper} apply. Then, any strong solution $(X^u,Y^u,Z^u,\Lambda^u)_u$ to \eqref{eq:3.2} in the sense of Definition \ref{def:strong_soln} gives rise to a strong solution $(\hat{X},\hat{Y},\hat{Z},\hat{\Lambda})$ to \eqref{eq: FBSDE} satisfying the conditions (i)--(iii) of such a solution and for which it holds that $\hat{\mathbf{P}}^u_t =\mathbb{P}^u_t$ for all $t\in[0,T]$ and all $u\in U_0$ for a set $U_0\in\mathcal{B}(U)$ with $m(U_0)=1$.
\end{prop}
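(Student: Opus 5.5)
We construct the randomised solution by reversing Theorem \ref{prop:Big-to-smal_SDE} and Corollary \ref{co:existence}: first put the given family of laws on path space and mix over $u$, then realise the mixture on $(\hat{\Omega},\hat{\mathcal{F}},\hat{\mathbb{P}})$ by a strong construction driven by $(\hat{B},\hat{\chi}_0,\mathfrak{u})$.

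\textbf{Step 1: the mixed law.} Let $(X^u,Y^u,Z^u,\Lambda^u)_u$ be a strong solution, with $U_0\in\mathcal{B}(U)$ such that $(\mathbb{P}^u)_{u\in U_0}$ is a Markov kernel for $\mathcal{B}(U_0)$. For each $u$, consider the joint law
\[
\mu^u:=\mathbb{P}\circ(X^u,Y^u,Z^u,\Lambda^u,B^u,\chi^u_0)^{-1}
\]
on $\bar{\Omega}=S_1\times S_2$. We first check that $(\mu^u)_u$ is a Markov kernel, possibly after shrinking $U_0$. The solution on $[0,T]$ is a strong solution of the standard FBSDE \eqref{eq: FBSDE_change}, with the kernels $(\mathbb{P}^{\tilde{u}}_t)$ given exogenously. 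By \cite[Theorem 3.1]{Pardoux} this equation has pathwise uniqueness, so the Yamada--Watanabe--Kurtz result \cite[Theorem 1.5]{kurtz2014weakstrongsolutionsgeneral} gives $\mu^u=\nu^u(\mathrm{d}s_2)\,\delta_{F^u(s_2)}$ for a measurable map $F^u:S_2\to S_1$. Hence $\mu^u$ is the unique element of $\mathcal{S}_{\Gamma,\nu}$ for index $u$.

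\textbf{Step 2: the main obstacle, joint measurability.} The difficulty is to obtain measurability of $u\mapsto\mu^u$, and more strongly a jointly measurable $(u,s_2)\mapsto F^u(s_2)$. We only know that $u\mapsto \mathbb{P}^u=\mu^u(\cdot\times S_2)$ is measurable, and that the coefficients are Borel in $u$. I would try the following route. The set $\{(u,\mu):\mu\in\mathcal{S}_{\Gamma,\nu}^u\}$ is Borel (or at least analytic) in $U\times\mathcal{P}(\bar\Omega)$, because the constraint $\Gamma$ is expressed through limits of simple-integrand approximations with Borel dependence on $u$. Since each section is a singleton, the graph theorem (Lusin--Souslin) makes $u\mapsto\mu^u$ Borel on $U_0$. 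A Borel version of the factorisation map, $\tilde F(u,s_2)$, is then obtained by disintegrating the kernel $\mu^u(\mathrm{d}s_1\,\mathrm{d}s_2)\,m(\mathrm{d}u)$ with respect to $(u,s_2)$ \cite[Corollary 10.4.10]{bogachev}. Each conditional law is then a Dirac mass for $m\otimes\nu^u$-a.e. $(u,s_2)$, and selecting its atom measurably gives $\tilde F$.

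\textbf{Step 3: transfer to $\hat{\Omega}$.} Define
\[
(\hat{X},\hat{Y},\hat{Z},\hat{\Lambda}):=\tilde F(\mathfrak{u},\hat{B},\hat{\chi}_0),
\]
and re-select predictable representatives of the $L^2$-components exactly as in the proof of Lemma \ref{Lm: FBSDE paper}, so that \eqref{eq:marg_laws_equal_project} holds. These processes are $\hat{\mathbb{F}}$-progressive, being measurable functionals of the driving paths and adapted through the predictability of the canonical process. The conditional law of $(\hat{X},\hat{Y},\hat{Z},\hat{\Lambda},\hat{B},\hat{\chi}_0)$ given $\mathfrak{u}=u$ is $\mu^u$. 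Consequently $\hat{\mathbf P}^u_t=\mathbb{P}^u_t$ for $u\in U_0$ by \eqref{eq:project_property}, and \eqref{eq:Lambda_BM_expression} follows from the corresponding property of each $\Lambda^u$.

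\textbf{Step 4: the dynamics.} It remains to show that the equation \eqref{eq: FBSDE} holds. Since the mean-field terms coincide with those in \eqref{eq:3.2}, the argument is the one in Theorem \ref{prop:Big-to-smal_SDE} run in reverse. For each fixed $u$, $\mu^u(\mathfrak f(t)=\mathfrak b(t)=0)=1$. Approximating the stochastic integrals by simple integrands, uniformly in a measurable way over $u$, identifies the fixed-$u$ integrals with the integrals against $\hat{B}$ under the mixture. Integrating over $m$ then gives $\hat{\mathbb{P}}(\mathfrak f(t)=\mathfrak b(t)=0)=1$. The square-integrability in $\hat{\mathcal{H}}^2$ follows from the $\mathcal{S}^2_m\times\mathcal{H}^2_m$ bounds by Fubini.
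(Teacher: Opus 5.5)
Your argument is sound in outline, but it takes a genuinely different and heavier route than the paper.

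\textbf{Your route.} You first form, for each fixed $u$, the joint law $\mu^u$ of solution and drivers. You then need three further ingredients:
\begin{itemize}
\item Borel dependence of $u\mapsto\mu^u$, obtained from Borelness of the solution set in $(u,\mu)$ together with Lusin--Souslin;
\item a jointly measurable, non-anticipating strong-solution map $\tilde F(u,\chi,w)$;
\item evaluation of this map at $(\mathfrak{u},\hat{B},\hat{\chi}_0)$.
\end{itemize}

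\textbf{The paper's route.} The paper never constructs such a map. It solves the randomised equation \eqref{FBSDe small_to_big} directly on $\hat{\Omega}$, with the kernels $(\mathbb{P}^u_t)_u$ frozen. This is a standard FBSDE in which $\mathfrak{u}$ is merely $\hat{\mathcal{F}}_0$-measurable randomness, so existence, progressive measurability and measurability in the type all come for free. It then disintegrates this single solution as in Theorem~\ref{prop:Big-to-smal_SDE}, giving per-$u$ solutions of \eqref{eq: FBSDE_u} whose path laws form a Markov kernel by construction. Next, per-$u$ pathwise uniqueness \cite{Pardoux} and \cite{kurtz2014weakstrongsolutionsgeneral}, used as in Corollary~\ref{co:existence}, identify these laws with $\mathbb{P}^u$ for $m$-a.e.\ $u$. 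Hence the frozen kernel is the conditional law given $\mathfrak{u}$, and \eqref{FBSDe small_to_big} is in fact \eqref{eq: FBSDE}.

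\textbf{Trade-off.} Your route buys more: a jointly Borel strong-solution functional, and the Markov-kernel property of the joint laws with the drivers, which the paper only alludes to. The price is exactly the measurability theory that the paper's trick sidesteps.

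\textbf{What your sketch still needs.}
\begin{itemize}
\item In Step 2, Borelness of $\{(u,\mu)\}$ should come from $\mu$-independent approximations. For instance, discretised Steklov averages $H^n$ of the integrands converge in $L^2([0,T])$ path by path. The constraint then becomes $\lim_n\mathbb{E}^{\mu}[1\wedge(|\mathfrak{f}^n(t)|+|\mathfrak{b}^n(t)|)]=0$ for rational $t$, plus countably many moment conditions saying that $w$ is a Brownian motion for $\mathbb{G}$ and that $\lambda=\Lambda^u(\cdot,\chi,w)$ a.e.
\item That last condition must be part of the constraint; otherwise the sections are not singletons.
\item In Step 3, adaptedness of $\tilde F(\mathfrak{u},\hat{B},\hat{\chi}_0)$ does not follow from predictability of the canonical process on $S_1$. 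That predictability concerns the filtration of $S_1$, not $\sigma(\mathfrak{u},\hat{\chi}_0,\hat{B}_{\cdot\wedge t})$.
\item To fix this, transfer the non-anticipativity of each $F^u$ to the selected $\tilde F$. For example, show that under the mixture, $x_t\circ\tilde F$ equals a.s.\ its conditional expectation given $(u,\chi,w_{\cdot\wedge t})$; fibrewise this is just adaptedness of $F^u$. Do the same for $\int_0^t z_s g(s)\,\mathrm{d}s$.
\end{itemize}
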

\begin{proof}
 Let $(\hat{\Omega},\hat{\mathcal{F}},\hat{\mathbb{P}})$ and $(\hat{B},\hat{\chi}_0, \mathfrak{u})$ be as in Lemma \ref{Lm: FBSDE paper}. Exactly as in that lemma (in fact, requiring only a standard well-posedness result without the mean-field aspect), we have a strong (and pathwise unique) solution to the FBSDE
\begin{equation} \label{FBSDe small_to_big}
    \begin{cases}
&\mathrm{d}\hat{X}'_t = b^{\mathfrak{u}}\bigl(t,\hat{X}'_t, \hat{Y}'_t, \hat{Z}'_t, \hat{\Lambda}^\prime_t, (\mathbb{P}^{u}_{t})_{u }\bigr)\mathrm{d}t+ \sigma^{\mathfrak{u}}\bigl(t,\hat{X}'_t, \hat{Y}'_t, \hat{Z}'_t, \hat{\Lambda}^\prime_t, (\mathbb{P}^{u}_{t})_{u}\bigr)\mathrm{d}\hat{B}_t \\
&-\mathrm{d}\hat{Y}'_t = f^{\mathfrak{u}}\bigl(t,\hat{X}'_t, \hat{Y}'_t, \hat{Z}'_t, \hat{\Lambda}^\prime_t, (\mathbb{P}^{u}_{t})_{u}\bigr)\mathrm{d}t -\hat{Z}'_t \mathrm{d}\hat{B}_t\\
\end{cases}
\end{equation}
with initial condition $\hat{X}'_0 = \hat{\chi}_0$ and terminal condition
$\hat{Y}'_T = G^{\mathfrak{u}}\bigl(\hat{X}'_T,(\mathbb{P}^{u}_{T,1})_{u}\bigr)$, where the only difference from the formulation in \eqref{eq: FBSDE}--\eqref{eq:Lambda_BM_expression} is that the Markov kernels $(\mathbb{P}^{u}_{t})_{u }$ are exogenous and correspond to a given solution $(X^u,Y^u,Z^u,\Lambda^u)_u$ to \eqref{eq:3.2}. 

Arguing as in Theorem \ref{prop:Big-to-smal_SDE}, $(\hat{X}^\prime,\hat{Y}^\prime,\hat{Z}^\prime,\hat{\Lambda}^\prime)$ induces a solution $(\hat{X}^{u,\prime},\hat{Y}^{u,\prime},\hat{Z}^{u,\prime},\hat{\Lambda}^{u,\prime})_{u\in U_0}$ to \eqref{eq: FBSDE_u} except that the Markov kernel is given exogenously by $(\tilde{\mathbb{P}}^{u}_{t})_{u }$. This holds for all $u\in U_0$, where $U_0\subset U$ is some Borel set with $m(U_0)=1$. Moreover, we get from the particular construction that $\hat{\mathbb{P}}^{u,\prime}_t := \hat{\mathbb{P}}\circ(\hat{X}^{u,\prime}_t,\hat{Y}^{u,\prime}_t,\hat{Z}^{u,\prime}_t,\hat{\Lambda}^{u,\prime}_t)^{-1}$ and $\hat{\mathbb{P}}^{u,\prime} := \hat{\mathbb{P}}\circ(\hat{X}^{u,\prime},\hat{Y}^{u,\prime},\hat{Z}^{u,\prime},\hat{\Lambda}^{u,\prime})^{-1}$ satisfy $\hat{\mathbb{P}}^{u,\prime}_t=\hat{\mathbb{P}}^{u,\prime}\circ(x_t,y_t,z_t,\lambda_t)^{-1}$. Using the property \eqref{eq:marg_laws_equal_project} of $(\hat{X}^\prime,\hat{Y}^\prime,\hat{Z}^\prime,\hat{\Lambda}^\prime)$, it also follows as in the proof of Theorem \ref{prop:Big-to-smal_SDE} that each $(\hat{\mathbb{P}}^{u,\prime}_t)_u$ can serve as the Markov kernel for the conditional law of $(\hat{X}^{\prime},\hat{Y}^{\prime},\hat{Z}^{\prime},\hat{\Lambda})$ given $\mathfrak{u}$. Now, for each $u\in U_0$, $(\hat{X}^{u,\prime},\hat{Y}^{u,\prime},\hat{Z}^{u,\prime},\hat{\Lambda}^{u,\prime})$ solves the same FBSDE as $(X^u,Y^u,Z^u,\Lambda^u)$ only on a different probability space, so we can argue exactly as in the proof of Corollary \ref{Lm: FBSDE paper} to conclude that $(\mathbb{P}^u)_{u\in U_0}=(\hat{\mathbb{P}}^{u,\prime})_{u\in U_0}$. Since $\mathbb{P}^u_t=\mathbb{P}^u\circ (x_t,y_t,z_t,\lambda_t)^{-1}$ by Definition \ref{def:strong_soln}, and since the same holds for $(\hat{X}^{u,\prime},\hat{Y}^{u,\prime},\hat{Z}^{u,\prime},\hat{\Lambda}^{u,\prime})$, we get $(\mathbb{P}^u_t)_{u\in U_0}=(\hat{\mathbb{P}}_t^{u,\prime})_{u\in U_0}$ for all $t\in[0,T]$.  Finally, we said above that the Markov kernel $(\hat{\mathbb{P}}_t^{u,\prime})_{u\in U_0}$ characterizes the conditional law of $(\hat{X}^{\prime},\hat{Y}^{\prime},\hat{Z}^{\prime},\hat{\Lambda})$ given $\mathfrak{u}$. Thus, we can indeed replace $(\mathbb{P}^u_t)_{u}$ by $(\hat{\mathbf{P}}^{u,\prime}_t)_u$ in \eqref{FBSDe small_to_big} and its terminal condition, where $(\hat{\mathbf{P}}^{u,\prime}_t)_u$ produces the conditional law of $(\hat{X}^{\prime},\hat{Y}^{\prime},\hat{Z}^{\prime},\hat{\Lambda})$ given $\mathfrak{u}$, and we have that $\hat{\mathbf{P}}^{u,\prime}_t=\mathbb{P}^u_t$ for all $t\in[0,T]$ and all $u \in U_0$ as required.
\end{proof}

\begin{corollary}[Pathwise uniqueness of strong solutions] \label{co:uniqueness} 
    Let Assumption \ref{Assumption 4.1} hold and let the conditions of Lemma \ref{Lm: FBSDE paper} apply. Then, strong solutions to the FBSDE system \eqref{eq:3.2} in the sense of Definition \ref{def:strong_soln} exhibit strong (pathwise) uniqueness for every $u\in U$.
\end{corollary}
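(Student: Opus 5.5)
Take two strong solutions $(X^{i,u},Y^{i,u},Z^{i,u},\Lambda^{i,u})_u$, $i=1,2$, of \eqref{eq:3.2} in the sense of Definition \ref{def:strong_soln}, both on $(\Omega,\mathcal{F},\mathbb{P})$ with the same $(B^u)_u$ and $(\chi^u_0)_u$. We first show that their Markov kernels of marginal laws agree for $m$-a.e.\ index. Then, for every $u\in U$, each index reduces to a standard FBSDE with exogenous coefficients, and pathwise uniqueness follows from \cite[Theorem 3.1]{Pardoux}.

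\emph{Step 1: identification of the laws.} Apply Proposition \ref{prop:small_to_big} to each solution. This gives strong solutions $(\hat{X}^i,\hat{Y}^i,\hat{Z}^i,\hat{\Lambda}^i)$ of the auxiliary randomised FBSDE \eqref{eq: FBSDE} on $(\hat{\Omega},\hat{\mathcal{F}},\hat{\mathbb{P}})$, with conditional-law kernels satisfying $\hat{\mathbf{P}}^{i,u}_t=\mathbb{P}^{i,u}_t$ for all $t$ and all $u\in U_0^i$, where $m(U_0^i)=1$. Both satisfy (i)--(iii) of a solution to \eqref{eq: FBSDE}, and by \eqref{eq:Lambda_BM_expression} both $\hat{\Lambda}^i$ agree $\mathrm{d}t\otimes\hat{\mathbb{P}}$-a.e.\ with $\Lambda^{\mathfrak{u}}(t,\hat{\chi}_0,\hat{B}_{\cdot\land t})$. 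Lemma \ref{Lm: FBSDE paper} therefore yields $(\hat{X}^1,\hat{Y}^1,\hat{Z}^1)=(\hat{X}^2,\hat{Y}^2,\hat{Z}^2)$ in $\hat{\mathcal{H}}^2$. Hence the joint laws with $\mathfrak{u}$ coincide for every $t$ for $X,Y$ (by continuity of paths), and for a.e.\ $t$ for $Z,\Lambda$. Disintegrating in $\mathfrak{u}$, we get $\mathbb{P}^{1,u}_t=\mathbb{P}^{2,u}_t$ for $m$-a.e.\ $u$ and a.e.\ $t$.

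The main obstacle is the quantifier issue: the exceptional $m$-null set may depend on $t$, and for $Z,\Lambda$ we only get equality for a.e.\ $t$. This suffices for our purposes. The coefficients enter the dynamics only through $\mathrm{d}t$- and $\mathrm{d}B$-integrals, and by Assumption \ref{Assumption 4.1} they depend on the kernels only through $W_{2,m}$. By the joint measurability lemma and Fubini, $\int_0^T W_{2,m}^2(\mathbb{P}^1_t,\mathbb{P}^2_t)\,\mathrm{d}t=0$. The terminal kernels $\mathbb{P}^{i}_{T,1}$ coincide outright, because $X$ is continuous and equality of the $X$-marginals holds for every $t$. Hence, for each fixed $u\in U$, including $u\notin U_0^1\cap U_0^2$, the coefficient processes $t\mapsto b^u(t,\cdot,(\mathbb{P}^{1,\tilde u}_t)_{\tilde u})$ and $t\mapsto b^u(t,\cdot,(\mathbb{P}^{2,\tilde u}_t)_{\tilde u})$ agree for a.e.\ $t$ as functions of the remaining arguments, and likewise for $\sigma^u$, $f^u$ and $G^u$.

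\emph{Step 2: pathwise uniqueness per index.} Fix any $u\in U$. Both $(X^{i,u},Y^{i,u},Z^{i,u})$ solve, on $(\Omega,\mathcal{F},\mathbb{F}^u,\mathbb{P})$ with driver $B^u$, the same standard fully coupled FBSDE. Its coefficients are the frozen-kernel coefficients from Step 1. In them, $\Lambda^{i,u}$ may be replaced by $\Lambda^u(t,\chi^u_0,B^u_{\cdot\land t})$, as the two agree $\mathrm{d}t\otimes\mathbb{P}$-a.e.\ by definition. These frozen coefficients satisfy the monotonicity and Lipschitz conditions of Assumption \ref{Assumption 4.1} uniformly in the frozen measure argument, and the smallness conditions of Lemma \ref{Lm: FBSDE paper} imply those of \cite[Theorem 3.1]{Pardoux}, which are weaker since the mean-field constants drop out. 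Therefore \cite[Theorem 3.1]{Pardoux} gives $X^{1,u}=X^{2,u}$ and $Y^{1,u}=Y^{2,u}$ up to indistinguishability, and $Z^{1,u}=Z^{2,u}$ and $\Lambda^{1,u}=\Lambda^{2,u}$ $\mathrm{d}t\otimes\mathbb{P}$-a.e. This is exactly the asserted pathwise uniqueness for every $u\in U$. As a consequence, $\mathbb{P}^{1,u}=\mathbb{P}^{2,u}$ for all $u$ as laws on $S_1$.
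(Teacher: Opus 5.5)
Your proposal is correct and follows the paper's proof. Both lift the two solutions via Proposition \ref{prop:small_to_big}, use the uniqueness in Lemma \ref{Lm: FBSDE paper} to identify the Markov kernels $m$-a.e., and then freeze the kernels and apply \cite[Theorem 3.1]{Pardoux} separately for every $u\in U$. Your handling of the quantifiers is a genuine refinement: you note that the full kernels are identified only for a.e.~$t$, with a $t$-dependent null set, but for every $t$ on the $X$-marginal. This is enough, because the kernels enter only through $\mathrm{d}t$-integrals, through $W_{2,m}$, and through the terminal law. The paper instead slightly overstates equality for all $t$ on a single set $U_0$.
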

\begin{proof}
Suppose we have two strong solutions on $(\Omega,\mathcal{F},\mathbb{F},\mathbb{P})$ in the sense of Definition \ref{def:strong_soln} with Markov kernels $(\mathbb{P}^{1,u}_t)_u$ and $(\mathbb{P}^{2,u}_t)_u$. By Proposition \ref{prop:small_to_big}, they give rise to two (strong) solutions of \eqref{eq: FBSDE}. But the uniqueness in Lemma \ref{Lm: FBSDE paper} (and the property of the Markov kernels in Proposition \ref{prop:small_to_big}) then implies that $\mathbb{P}^{1,u}_t$ and $\mathbb{P}^{2,u}_t$ agree for all $t\in[0,T]$ and all $u\in U_0$ for a set $U_0\in \mathcal{B}(U)$. Hence, the two solutions satisfy a standard FBSDE (with the Markov kernel frozen) for every $u\in U$. Then, the pathwise uniqueness from \cite[Theorem 3.1]{Pardoux} gives pathwise uniqueness of the two solutions for every $u\in U$. 
\end{proof}

The existence in Corollary \ref{co:existence} and uniqueness in Corollary \ref{co:uniqueness}, gives us Theorem \ref{Theorem 4.2}.

\begin{remark}[Yamada--Watanabe] We note that the proofs of Theorem \ref{prop:Big-to-smal_SDE} and Corollary \ref{co:existence} could readily be extended to give the statement that, for the general mean-field systems of FBSDEs that we consider here, strong (pathwise) uniqueness and the existence of a weak solution is equivalent to uniqueness in law and the existence of a strong solution.
\end{remark}

\appendix
\section{Appendix} \label{Lion}

Suppose Assumption \ref{Ass:5.2} holds. Then, by Theorem \ref{Theorem for FBSDE} and Theorem \ref{Theorem 5.4}, the system of FBSDEs \eqref{eq:4.1} under control $\hat{\boldsymbol{\alpha}}$, the system of FBSDEs \eqref{eq:4.1} under control $\hat{\boldsymbol{\alpha}}+\epsilon\boldsymbol{\pi}$, and the variational equation \eqref{eq::variation} all admit a unique strong solution. With the notation of Section \ref{section 4}, we denote the strong solutions by $(\Theta^{u,\alpha})_u := (X^{u},Y^{u},Z^{u})_u$, $(\Theta^{u,\hat{\alpha}+\epsilon\pi})_u := (X^{u,\varepsilon},Y^{u,\varepsilon},Z^{u,\varepsilon})_u$, and $(\Theta^{u,'})_u := (X^{u,'},Y^{u,'},Z^{u,'})_u$, respectively. We need to know that, when these solutions are considered together, their joint laws form a Markov kernel.

\begin{lemma}[Markov kernel] \label{Lm:Markov Kernel Lemma}
With the above definitions, the joint laws $(\mathbb{P}_{\Theta^{u,\alpha},\Theta^{u,\alpha+\epsilon\pi},\Theta^{u,'}})_u$ can be taken to be a Markov kernel for $\mathcal{U}$.
\end{lemma}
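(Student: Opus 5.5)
We mirror the lifting strategy of Section~\ref{sect:well-posedness}, this time for the three coupled problems simultaneously. On $(\hat{\Omega},\hat{\mathcal{F}},\hat{\mathbb{P}})$ with $(\hat{B},\hat{\chi}_0,\mathfrak{u})$ as in Lemma~\ref{Lm: FBSDE paper}, the controls $\hat{\alpha}$ and $\pi$ are Borel functions of $(u,t,\chi_0^u,B^u_{\cdot\wedge t})$. We may therefore set $\hat{\Lambda}_t:=\bigl(\hat{\alpha}^{\mathfrak{u}}(t,\hat{\chi}_0,\hat{B}_{\cdot\wedge t}),\pi^{\mathfrak{u}}(t,\hat{\chi}_0,\hat{B}_{\cdot\wedge t})\bigr)$, an $\hat{\mathbb{F}}$-progressive input.

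We then consider the enlarged randomised mean-field FBSDE whose state is the triple $(\Theta^{\alpha},\Theta^{\alpha+\varepsilon\pi},\Theta')$. The first two components solve \eqref{eq: FBSDE} with controls $\hat{\alpha}^{\mathfrak{u}}$ and $\hat{\alpha}^{\mathfrak{u}}+\varepsilon\pi^{\mathfrak{u}}$ respectively. The third solves the lifted variational equation, whose coefficients depend on the first component and on the conditional laws given $\mathfrak{u}$ of the first and third components. This system is triangular. The first two blocks are individually well-posed by Lemma~\ref{Lm: FBSDE paper} under Assumption~\ref{Ass:5.2}, exactly as in Theorem~\ref{Theorem for FBSDE}. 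Given the first block, the variational block is well-posed by the argument of Theorem~\ref{Theorem 5.4}, applied in its lifted form as in Lemma~\ref{Lm: FBSDE paper}. In that step the solution of the first block, together with $\pi$, plays the role of the exogenous $\hat{\Lambda}$. Hence the joint lifted system has a unique solution $\hat{\Xi}=(\hat{\Theta}^{\alpha},\hat{\Theta}^{\alpha+\varepsilon\pi},\hat{\Theta}')$, and all $Z$-type components are modified to predictable $L^2$-valued versions exactly as in the proof of Lemma~\ref{Lm: FBSDE paper}.

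Next, we repeat the proof of Theorem~\ref{prop:Big-to-smal_SDE} on the path space $S_1^3\times S_2\times U$. We push $\hat{\mathbb{P}}$ forward to obtain $\bar{\mathbb{P}}'$ and disintegrate with respect to the $U$-coordinate \cite[Corollary 10.4.10]{bogachev}. This produces a Markov kernel $(\bar{\mathbb{P}}^u)_u$ for $\mathcal{B}(U)$ under which, for $u$ in a Borel set $U_0$ of full measure, the canonical triple solves the three frozen-kernel FBSDEs driven by the canonical Brownian motion. The stochastic integrals are identified by the approximation argument used there.

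The key step is identification with the given strong solutions on $(\Omega,\mathcal{F},\mathbb{P})$. By Proposition~\ref{prop:small_to_big} and the uniqueness in Lemma~\ref{Lm: FBSDE paper}, the conditional-law kernels of each component of $\hat{\Xi}$ coincide $m$-a.e.\ with $(\mathbb{P}^u_t)_u$ of the corresponding original solutions. Consequently, for $u\in U_0$ both $(\Theta^{u,\alpha},\Theta^{u,\alpha+\varepsilon\pi},\Theta^{u,'})$ and the canonical triple under $\bar{\mathbb{P}}^u$ solve the same standard, non-mean-field, fully coupled FBSDE system with exogenous kernels and the same input $(\chi_0^u,B^u)$. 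Each block enjoys pathwise uniqueness by \cite[Theorem 3.1]{Pardoux}, and the triangular structure lets us treat the blocks successively. The Yamada--Watanabe/Kurtz result \cite[Theorem 1.5, Lemma 2.10]{kurtz2014weakstrongsolutionsgeneral} then yields equality of the joint laws of solution and input. Thus $\mathbb{P}\circ(\Theta^{u,\alpha},\Theta^{u,\alpha+\varepsilon\pi},\Theta^{u,'})^{-1}$ equals the corresponding image of $\bar{\mathbb{P}}^u$ for every $u\in U_0$. Since $u\mapsto\bar{\mathbb{P}}^u$ is a Markov kernel, so is the joint law on $U_0$, and redefining it arbitrarily on the null set $U\setminus U_0$ gives a Markov kernel for $\mathcal{U}$.

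The main obstacle is the identification step. We must check that the Kurtz framework applies to the joint system with a single constraint $\Gamma$ encoding all three FBSDEs, and that strong uniqueness holds for the joint frozen system rather than merely blockwise. I would handle this by applying pathwise uniqueness sequentially: first to $\Theta^{u,\alpha}$ and $\Theta^{u,\alpha+\varepsilon\pi}$, and then to $\Theta^{u,'}$ with the already-identified first block inserted as an input.
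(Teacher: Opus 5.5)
Your final step matches the paper's: disintegrate a lifted solution of the triple over the $U$-coordinate, show that for $m$-a.e.\ $u$ the canonical triple under $\bar{\mathbb{P}}^u$ and the original triple solve the same frozen-kernel triangular system, and transfer the Markov-kernel property via sequential pathwise uniqueness and \cite[Theorem 1.5, Lemma 2.10]{kurtz2014weakstrongsolutionsgeneral}.

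Where you differ is in what you lift. The paper follows Proposition~\ref{prop:small_to_big}, not Theorem~\ref{prop:Big-to-smal_SDE}. The marginal kernels of the three given solutions are already Markov kernels by Theorems~\ref{Theorem for FBSDE} and~\ref{Theorem 5.4}, so it freezes them from the start. The lifted problem on $\hat{\Omega}$ is then a standard, non-mean-field triangular FBSDE, and the kernels appearing after disintegration are the original ones by construction. You instead solve the endogenous randomised mean-field system for the triple. This creates two extra tasks:
\begin{enumerate}
\item a lifted mean-field well-posedness result for the variational block driven by $\hat{\Theta}^{\alpha}$;
\item an identification of the conditional-law kernels of $\hat{\Xi}$ with the original kernels.
\end{enumerate}
This buys nothing here, since your identification itself goes through Proposition~\ref{prop:small_to_big}, which already solves the frozen lifted system.

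The identification step is also where your argument is incomplete. For the variational block, the relevant kernel is $(\mathrm{Law}(X^{\prime,u}_t,Y^{\prime,u}_t,Z^{\prime,u}_t,\vartheta^u_t,\pi^u_t))_u$. Proposition~\ref{prop:small_to_big}, applied to the variational system, lifts it with input $\Lambda^{\mathfrak{u}}(t,\hat{\chi}_0,\hat{B}_{\cdot\wedge t})$, a Borel representation of the original $(\vartheta^u,\pi^u)$. Your third block is instead driven by $(\hat{\vartheta}^{\alpha},\hat{\pi})$.

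Uniqueness for the lifted variational equation identifies the two only once you know these inputs agree $\hat{\mathbb{P}}$-a.s. That requires showing that, for $m$-a.e.\ $u$, the conditional law of $(\hat{\Theta}^{\alpha},\hat{\chi}_0,\hat{B})$ given $\mathfrak{u}=u$ equals the law of $(\Theta^{u,\alpha},\chi^u_0,B^u)$. This is a full disintegration-plus-Kurtz step for the first block, which must be done before the corresponding step for the third. So the sequential treatment you reserve for the final identification is already needed at this point. With it added, your route closes; freezing the kernels as the paper does avoids the issue altogether.
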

\begin{proof} 
The well-posedness results each give that the three marginal Markov kernels, corresponding to $(\Theta^{u,\alpha})_u$, $(\Theta^{u,\hat{\alpha}+\epsilon\pi})_u$, and $(\Theta^{u,'})_u$ are Markov kernels. We only sketch the proof that their joint law can be assumed to be a Markov kernel, as it amounts to repeating the steps of Proposition \ref{prop:small_to_big}. The triple $(\Theta^{u,\alpha},\Theta^{u,\hat{\alpha}+\epsilon\pi},\Theta^{u,'})_u$ solves a larger FBSDE system which only involves the marginal Markov kernels. As in the proof of Proposition \ref{prop:small_to_big}, the triple gives rise to a variant of \eqref{eq: FBSDE} from which we obtain a a variant of \eqref{eq: FBSDE_u}, where the Markov kernels in the dynamics are given exogenously by the aforementioned marginal Markov kernels, and we then deduce that the laws are the same as those of our original triple. But we know that the laws for the variant of \eqref{eq: FBSDE_u} are Markov kernels, by construction, and hence the conclusion follows.
\end{proof}

 In the following, we adopt the same notation as in Section \ref{Lion construct}. The next result provides two Lipschitz properties for partial $L_m$-derivatives. 

\begin{lemma}[$L_m$ Lipschitzness]\label{lemma:lipschitz}
Denote $
\bar{\Theta}^{u}_t = \bigl(X^u_t, Y^u_t, Z^u_t, \alpha^u_t, (\mathbb{P}^{\tilde{u}}_{t})_{\tilde{u}}\bigr)$, $ \bar{\vartheta}^{u}_t = (X^u_t, Y^u_t, Z^u_t, \alpha^u_t)$ and $(\tilde{\bar{\Theta}}^{u}_t,\tilde{\bar{\vartheta}}^{u}_t)$ is an independent copy of $(\bar{\Theta}^{u}_t,\bar{\vartheta}^{u}_t)$. Let random variables $\mathfrak{X}^1, \mathfrak{X}^2:\Omega \rightarrow \mathbb{R}^n$ be given, and let $\tilde{\mathfrak{X}}^1$ and $\tilde{\mathfrak{X}}^2$ be independent copies of these such that the joint law of each $(\tilde{\mathfrak{X}}^{i,u},\tilde{\bar{\vartheta}}^{u}_t)$ is the same as that of $(\mathfrak{X}^{i,u},\bar{\vartheta}^{u}_t),\quad i = 1,2$. Let $\eta_1, \eta_2 \in \mathcal{P}^2_m( \mathbb{R}^n \times \mathbb{R}^l \times \mathbb{R}^{l \times d} \times \mathbb{R}^k ) $ with marginals $\eta_j^{(i)}$ for $j = 1,2$ and $i = 1,2,3,4$. Suppose, without loss of generality, that $\eta_1, \eta_2$ only differ in the first marginal.
We assume that the function $\gamma$ is $L_m$ differentiable in the sense of Definition \ref{def:Lm-derivative} and satisfies the following condition, for $u \in U$, all $(t, x, y, z, a) \in [0,T] \times \mathbb{R}^n \times \mathbb{R}^l \times \mathbb{R}^{l \times d} \times \mathbb{R}^k$,
\begin{equation} \label{ineq: marginal_distance}
    \bigl| \gamma^u(t, x, y, z, a, \eta_1) - \gamma^u(t, x, y, z, a, \eta_2) \bigr| 
\le \rho  W_{2,m}\big( \eta_1^{(1)}, \eta_2^{(1)} \big),
\end{equation}
then
\begin{equation}\label{eq:lipschitz1}
	    |
	\int \tilde{\mathbb{E}}\big[ \partial_{P^{(1)}}\gamma^u(t, \bar{\Theta}^{u}_t)(\tilde{\bar{\vartheta}}^{\tilde{u}}_t) \tilde{\mathfrak{X}}_t^{1,\tilde{u}} \big] \mathrm{d}m(\tilde{u})-\int \tilde{\mathbb{E}}\big[ \partial_{P^{(1)}}\gamma^u(t, \bar{\Theta}^{u}_t)(\tilde{\bar{\vartheta}}^{\tilde{u}}_t) \tilde{\mathfrak{X}}_t^{2,\tilde{u}} \big] \mathrm{d}m(\tilde{u})|
    \le \rho  W_{2,m}(\mathbb{P}_{\mathfrak{X}^1}, \mathbb{P}_{\mathfrak{X}^2}).
	\end{equation}
Likewise, given $\mathfrak{q}^1$ and $\mathfrak{q}^2$, and let $\tilde{\mathfrak{q}}^1$ and $\tilde{\mathfrak{q}}^2$ be independent copies of these such that the joint law of each $(\tilde{\mathfrak{q}}^{i,u},\tilde{\bar{\Theta}}^{u})$ is the same as that of $(\mathfrak{q}^{i,u},\bar{\Theta}^{u}),\quad i = 1,2$. If we additionally assume the family of functions $(\gamma^u)_u$ satisfies
\begin{equation}\label{eq:L2_assump_Lions}
[\int_U \tilde{\mathbb{E}}  [|\partial_{P^{(1)}}\gamma^{\top,\tilde{u}}(t,\tilde{\bar{\Theta}}^{\tilde{u}}_t)(\bar{\vartheta}^{u}_t)|^2]\mathrm{d}m(\tilde{u})]^{\frac{1}{2}} <C',
\end{equation}
for $u\in U$.
Then
\begin{equation}\label{eq:lipschitz2}
	|\int \tilde{\mathbb{E}}  [ \partial_{P^{(1)}}\gamma^{\top,\tilde{u}}(t, \tilde{\bar{\Theta}}^{\tilde{u}}_t)(\bar{\vartheta}^{u}_t)\tilde{\mathfrak{q}}^{1,\tilde{u}}_t  ] \mathrm{d}m(\tilde{u})-\int \tilde{\mathbb{E}}  [ \partial_{P^{(1)}}\gamma^{\top,\tilde{u}}(t, \tilde{\bar{\Theta}}^{\tilde{u}}_t)(\bar{\vartheta}^{u}_t)\tilde{\mathfrak{q}}^{2,\tilde{u}}_t  ]\mathrm{d}m(\tilde{u})|\le  C'   W_{2,m}(\mathbb{P}_{\mathfrak{q}^1}, \mathbb{P}_{\mathfrak{q}^2}).
	\end{equation}
\end{lemma}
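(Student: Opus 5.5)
The plan is to read both bounds as estimates on a continuous linear functional acting on (lifts of) the perturbation, and to compute its operator norm through the $L_m$-differentiability expansion \eqref{eq:generalised_L-diff}.

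For \eqref{eq:lipschitz1}, fix $u$, $t$ and the point $\bar{\Theta}^u_t$. Define the linear map
\[
\mathcal{L}^u(\mathfrak{X}) := \int \tilde{\mathbb{E}}\big[ \partial_{P^{(1)}}\gamma^u(t, \bar{\Theta}^{u}_t)(\tilde{\bar{\vartheta}}^{\tilde{u}}_t)\, \tilde{\mathfrak{X}}^{\tilde{u}}_t \big]\,\mathrm{d}m(\tilde{u})
\]
on families $\mathfrak{X}$ that are jointly distributed with $\bar{\vartheta}$ as in the statement. By linearity it suffices to bound $|\mathcal{L}^u(\mathfrak{X}^1-\mathfrak{X}^2)|$. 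Apply \eqref{eq:generalised_L-diff} along the direction $\varepsilon(\mathfrak{X}^1-\mathfrak{X}^2)$, perturbing only the first component of the lift $\tilde{\bar{\vartheta}}$. This gives
\[
\gamma^u(\Theta^\varepsilon)-\gamma^u(\bar{\Theta}^u_t)=\varepsilon\mathcal{L}^u(\mathfrak{X}^1-\mathfrak{X}^2)+o(\varepsilon).
\]
The Lipschitz hypothesis \eqref{ineq: marginal_distance} bounds the left-hand side by $\rho W_{2,m}$ between the perturbed and unperturbed first-marginal kernels. That distance is at most $\varepsilon\bigl(\int \mathbb{E}|\mathfrak{X}^{1,\tilde u}-\mathfrak{X}^{2,\tilde u}|^2\mathrm{d}m\bigr)^{1/2}$, because the common coupling is admissible in each fibre. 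Dividing by $\varepsilon$ and letting $\varepsilon\to0$ yields $|\mathcal{L}^u(\mathfrak{X}^1-\mathfrak{X}^2)|\le\rho\|\mathfrak{X}^1-\mathfrak{X}^2\|_{L^2_m}$.

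Passing from the $L^2$ norm to $W_{2,m}(\mathbb{P}_{\mathfrak{X}^1},\mathbb{P}_{\mathfrak{X}^2})$ is the step I expect to be the main obstacle, since the left-hand side depends on the joint law of $\mathfrak{X}^i$ with $\bar\vartheta$. I would handle it by re-choosing, fibrewise in $\tilde u$, an optimal coupling of $\mathrm{Law}(\mathfrak{X}^{1,\tilde u})$ and $\mathrm{Law}(\mathfrak{X}^{2,\tilde u})$ in which each marginal keeps its prescribed joint law with $\tilde{\bar{\vartheta}}^{\tilde u}$. Gluing three-way via disintegration along $\tilde{\bar{\vartheta}}^{\tilde u}$ gives such a coupling when a conditional optimal coupling is used. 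Measurable selection in $\tilde u$ of optimal couplings makes the $L^2_m$ norm equal to $W_{2,m}$, while each integral $\mathcal{L}^u(\mathfrak{X}^i)$ is unchanged, as it depends only on the joint law of $(\tilde{\mathfrak{X}}^{i,\tilde u},\tilde{\bar\vartheta}^{\tilde u})$. Measurability in $\tilde u$ would follow from the Markov kernel machinery of Section \ref{sect:well-posedness} and Lemma \ref{Lm:Markov Kernel Lemma}. If the conditional gluing is too delicate, I would instead state the bound with the $L^2_m$ distance of the given coupling, which is all the applications in Theorem \ref{Theorem 5.4} need.

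For \eqref{eq:lipschitz2}, the roles are reversed: the variable $\bar\vartheta^u_t$ is evaluated in the argument of the derivative, and $\tilde{\mathfrak q}$ multiplies it. Since the expression is linear in $\tilde{\mathfrak q}$, I would apply Cauchy--Schwarz jointly in $\tilde{\mathbb{E}}$ and $\mathrm{d}m(\tilde u)$. This bounds the difference by
\[
\Bigl[\int \tilde{\mathbb{E}}|\partial_{P^{(1)}}\gamma^{\top,\tilde u}(t,\tilde{\bar\Theta}^{\tilde u}_t)(\bar\vartheta^u_t)|^2\mathrm{d}m\Bigr]^{1/2}\Bigl[\int\tilde{\mathbb{E}}|\tilde{\mathfrak q}^{1,\tilde u}-\tilde{\mathfrak q}^{2,\tilde u}|^2\mathrm{d}m\Bigr]^{1/2}.
\]
The first factor is at most $C'$ by \eqref{eq:L2_assump_Lions}. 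The second is converted to $W_{2,m}(\mathbb{P}_{\mathfrak q^1},\mathbb{P}_{\mathfrak q^2})$ by the same optimal recoupling argument as above.
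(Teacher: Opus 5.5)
Your first step is sound. Perturb the first coordinate of the lift $\tilde{\bar\vartheta}$ by $\varepsilon(\tilde{\mathfrak X}^1-\tilde{\mathfrak X}^2)$ and combine \eqref{eq:generalised_L-diff} with the Lipschitz hypothesis and the synchronous coupling in each fibre. This gives $|\mathcal L^u(\mathfrak X^1-\mathfrak X^2)|\le\rho\bigl(\int\mathbb E|\mathfrak X^{1,u}-\mathfrak X^{2,u}|^2\,\mathrm{d}m(u)\bigr)^{1/2}$ directly. That is tidier than the paper's route, which bounds the $L^2$-norm of $\partial_{P^{(1)}}\gamma^u$ by $\rho$ through a dual-norm sequence and then applies Cauchy--Schwarz. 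Your Cauchy--Schwarz step for \eqref{eq:lipschitz2} is also fine.

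The gap is the recoupling. Gluing along $\tilde{\bar\vartheta}^{\tilde u}$ with conditionally optimal couplings does preserve both joint laws, but its cost is
\[
\int_U\mathbb E\Bigl[W_2^2\bigl(\mathrm{Law}(\mathfrak X^{1,u}\mid\bar\vartheta^u_t),\mathrm{Law}(\mathfrak X^{2,u}\mid\bar\vartheta^u_t)\bigr)\Bigr]\mathrm{d}m(u).
\]
This is the smallest cost compatible with the two constraints, and it is in general strictly larger than $W_{2,m}^2(\mathbb P_{\mathfrak X^1},\mathbb P_{\mathfrak X^2})$. An optimal coupling of the marginal laws usually cannot be realised with both joint laws with $\tilde{\bar\vartheta}^{\tilde u}$ prescribed. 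So no measurable selection makes the $L^2_m$ norm equal to $W_{2,m}$.

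This step cannot be repaired, because \eqref{eq:lipschitz1} is false as stated. Take $\gamma^u(t,x,y,z,a,\eta)=\int_U\int\phi\,\mathrm{d}\eta^{(1),\tilde u}\,\mathrm{d}m(\tilde u)$ with $\phi(x)=\sqrt{1+|x|^2}$. The Lipschitz hypothesis holds with $\rho=1$, and $\partial_{P^{(1)}}\gamma^u(\cdot)(v)=\nabla\phi(v_1)$. Let $\mathfrak X^{1,u}=X^u_t$ and $\mathfrak X^{2,u}=-X^u_t$, with $\mathrm{Law}(X^u_t)$ symmetric (e.g.\ $b=0$, $\sigma=I$, $\chi^u_0=0$). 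The right-hand side vanishes. The joint-law constraint forces $\tilde{\mathfrak X}^{1,\tilde u}=-\tilde{\mathfrak X}^{2,\tilde u}=\tilde X^{\tilde u}_t$, so the left-hand side equals $2\int\mathbb E\bigl[|X^u_t|^2/\sqrt{1+|X^u_t|^2}\bigr]\mathrm{d}m(u)>0$. Multiplying this $\gamma$ by $\psi(x)=x/\sqrt{1+x^2}$ (with $n=1$) and taking $\mathfrak q^1=X_t$, $\mathfrak q^2=-X_t$ breaks \eqref{eq:lipschitz2} in the same way.

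The paper's proof fails at the same point. It asserts that every coupling of $\mathbb P_{\mathfrak X^1}$ and $\mathbb P_{\mathfrak X^2}$ can be realised with the laws of $(\tilde{\mathfrak X}^{i,u},\tilde{\bar\vartheta}^u)$ prescribed, which is false for the reason above.

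What is true is your fallback: the bounds with $\bigl(\int\mathbb E|\mathfrak X^{1,u}-\mathfrak X^{2,u}|^2\,\mathrm{d}m(u)\bigr)^{1/2}$, respectively the analogous quantity for $\mathfrak q$, on the right. This is also what the fixed-point estimates for the variational and adjoint equations actually use, since there $W_{2,m}$ is only ever bounded by the $L^2_m$ distance between two processes on a common space with the same $\Lambda$. You should present that as the result rather than as a contingency.
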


\begin{proof} By Cauchy-Schwarz, the left-hand side of \eqref{eq:lipschitz1} is bounded by
    \begin{align}
&| \Big[
   \int \tilde{\mathbb{E}}\big[ \partial_{P^{(1)}}\gamma^u(t, \bar{\Theta}^{u}_t)(\tilde{\bar{\vartheta}}^{\tilde{u}}_t)   ( \tilde{\mathfrak{X}}_t^{1,\tilde{u}}- \tilde{\mathfrak{X}}_t^{2,\tilde{u}}  ) \big] \mathrm{d}m(\tilde{u}) \Big]|  
\notag \\
   & \leq \Big[
   \int \tilde{\mathbb{E}}\big[ |\partial_{P^{(1)}}\gamma^u(t, \bar{\Theta}^{u}_t)(\tilde{\bar{\vartheta}}^{\tilde{u}}_t)|^2 \big] \mathrm{d}m(\tilde{u}) \Big]^{\frac{1}{2} }   ( \int \mathbb{E}[ |\tilde{\mathfrak{X}}_t^{1,\tilde{u}}- \tilde{\mathfrak{X}}_t^{2,\tilde{u}}|]^2\mathrm{d}m(\tilde{u})  )^{\frac{1}{2} }.\label{eq:lip_wasserstein}
\end{align}
Let $\Theta^j := (x,y,z,\alpha, \eta_j) , j = 1,2 $. Take a sequence $\Xi_n$ with $\int_U||\Xi^{u}_n||_2 \mathrm{d}m(u) \leq 1 $ such that
\begin{equation}\label{eq:uniform_L2_Lions}
\int_U \mathbb{{E}}\bigl[|\partial_{P^{(1)}}\gamma^u(\Theta^{1})(\Xi^{1,\tilde{u}})|^2 \bigr]^{\frac{1}{2}}\mathrm{d}m(\tilde{u}) = \lim_{n\rightarrow \infty} \bigl| \int_U \mathbb{{E}}\bigl[\partial_{P^{(1)}}\gamma^u(\Theta^{1})(\Xi^{1,\tilde{u}})\cdot \Xi^{\tilde{u}}_n\bigr]\mathrm{d}m(\tilde{u})\bigr|
\end{equation}
For any given $n\geq 1$, we can take $\eta_2$ such that $\Xi^1-\Xi^2 = \Xi_n$. Now \eqref{eq:generalised_L-diff} and the Lipschitz assumption on $b$ gives that 
\begin{align*}
&\bigl| \int_U\mathbb{{E}}\bigl[\partial_{P^{(1)}}\gamma^u(\Theta^{1})(\Xi^{1,\tilde{u}})\cdot \Xi^{\tilde{u}}_n\bigr]\mathrm{d}m(\tilde{u})\bigr| \leq \bigl| \gamma^{u}(\Theta^2)-\gamma^{u}(\Theta^1) \bigr| + o(1)  \\
&\leq \rho W_{2,m}(\eta^{(1)}_1, \eta^{(1)}_2) + o(1)
\leq \rho \int_U||\Xi^{1,u}-\Xi^{2,u}||_2 \mathrm{d}m(u) + o(1) \leq \rho + o(1)
\end{align*}
as $n\rightarrow\infty$. Thus, the left-hand side of \eqref{eq:uniform_L2_Lions} is bounded by $\rho$, as desired. For any coupling $\mu$ of $\mathbb{P}_{\mathfrak{X}^1}$ and $\mathbb{P}_{\mathfrak{X}^2}$, we can consider some probability space $(\tilde{\Omega},\tilde{\mathcal{F}},\tilde{\mathbb{P}})$ with random variables $(\tilde{\mathfrak{X}}^1,\tilde{\mathfrak{X}}^2, (\tilde{\bar{\vartheta}}^u)_u)$ so that the joint law of each $(\tilde{\mathfrak{X}}^{i,u},\tilde{\bar{\vartheta}}^{u})$ is as required, the law of each $\tilde{\mathfrak{X}}^{i}$ is $\mathbb{P}_{\mathfrak{X}^i}$, and the joint law of $(\tilde{\mathfrak{X}}^1,\tilde{\mathfrak{X}}^2)$ is $\mu$. Placing things on the product space, these are also independent copies as required. As any coupling can be obtained in this way, taking an infimum over all such constructions on the right-hand side of \eqref{eq:lip_wasserstein} returns $\rho W_{2,m}(\mathbb{P}_{\mathfrak{X}^1}, \mathbb{P}_{\mathfrak{X}^2})$. Finally, we can note that the left-hand side of \eqref{eq:lipschitz1} is the same for all these constructions (as the joint law of $(\tilde{\bar{\vartheta}}^u)_u$ with $\tilde{\mathfrak{X}
}^1$ or $\tilde{\mathfrak{X}}^2$ is kept fixed). Thus, we obtain \eqref{eq:lipschitz1} which proves the first part of the statement. The argument for \eqref{eq:lipschitz2} is analogous, given the assumption \eqref{eq:L2_assump_Lions}.
\end{proof}

\bibliographystyle{siam}

\bibliography{refs}	

\end{document}